\newtheorem{teo}{Theorem}[section]
\newtheorem{lm}[teo]{Lemma}
\newtheorem{coro}[teo]{Corollary}
\newtheorem{prop}[teo]{Proposition}
\newtheorem*{main}{Main Theorem}
\theoremstyle{definition}
\newtheorem{definition}[teo]{Definition}
\newtheorem{exa}[teo]{Example}
\newtheorem{rem}[teo]{Remark}
\newtheorem*{ack}{Acknowledgments}
\numberwithin{equation}{section}
\title[Periodic sets]{Extremals for sharp\\Poincar\'e-Sobolev inequalities:\\ periodically perforated sets and beyond}
\date{\today}
\subjclass[2010]{35A15, 35P30, 46E35}
\keywords{Poincar\'e-Sobolev inequalities, Lane-Emden equation, lack of compactness, capacity.}
\author[Brasco]{Lorenzo Brasco}
\address[L.\ Brasco]{Dipartimento di Matematica e Informatica
	\newline\indent
	Universit\`a degli Studi di Ferrara
	\newline\indent
	Via Machiavelli 35, 44121 Ferrara, Italy}
\email{lorenzo.brasco@unife.it}
\author[Briani]{Luca Briani}
\address[L.\ Briani]{School of Computation, Information and Technology,		\newline\indent
 	Technical University of Munich
 	\newline\indent
	Boltzmannstra\ss e 3, 85748 Garching bei M\"unchen, Germany.}
\email{luca.briani@tum.de}
\author[Prinari]{Francesca Prinari}
\address[F. Prinari]{Dipartimento di Scienze Agrarie, Alimentari e Agro-ambientali
\newline\indent 
Universit\`a di Pisa
\newline\indent
Via del Borghetto 80, 56124 Pisa, Italy}
\email{francesca.prinari@unipi.it}
\begin{document}

\begin{abstract}
We consider periodically perforated unbounded open sets and prove existence of extremals for the relevant sharp Poincar\'e-Sobolev embedding constant. The existence result holds no matter the shape or the regularity of the hole: it is sufficient that the latter is a compact set with positive capacity. We also show how to apply the main result in order to get a similar existence statement, for sets which are periodic in some directions and bounded in all the others.
\end{abstract}

\maketitle

\begin{center}
\begin{minipage}{10cm}
\small
\tableofcontents
\end{minipage}
\end{center}

\section{Introduction}

\subsection{A bestiary of open sets}
In this paper, we wish to pursue a classification of families of open sets $\Omega\subseteq\mathbb{R}^N$ for which the embedding 
\begin{equation}
\label{mario}
\mathscr{D}^{1,p}_0(\Omega)\hookrightarrow L^q(\Omega),
\end{equation}
has the following properties: 
\begin{itemize}
\item[(E1)] it is continuous;
\vskip.2cm 
\item[(E2)] it may fail to be compact;
\vskip.2cm
\item[(E3)] nevertheless, the sharp embedding constant
\[
\lambda_{p,q}(\Omega):=\inf_{\varphi\in C^\infty_0(\Omega)}\left\{\int_\Omega |\nabla\varphi|^p\,dx\, :\, \|\varphi\|_{L^q(\Omega)}=1\right\},
\]
is attained in $\mathscr{D}^{1,p}_0(\Omega)$, the latter being the {\it homogeneous Sobolev space} obtained as the completion of $C^\infty_0(\Omega)$ with respect to the norm
\[
\varphi\mapsto \|\nabla \varphi\|_{L^p(\Omega)},\qquad \text{for every}\ \varphi\in C^\infty_0(\Omega).
\] 
\end{itemize}
The exponent $q$ of the target space in \eqref{mario} will be {\it subcritical} in the sense of Sobolev embeddings, that is 
\begin{equation}\label{exponents} 
q \left\{\begin{array}{ll}
< p^*,& \text{if}\ p<N,\\
<\infty, & \text{if}\ p=N,\\
\le \infty,& \text{if}\ p> N,
\end{array}
\right.\qquad \text{where}\ p^*=\frac{N\,p}{N-p}.
\end{equation}
Moreover, we will limit ourselves to the {\it super-homogeneous} case $q>p$. The reason for this last restriction is readily explained: in the case $q<p$, a (maybe not so) well-known result by Maz'ya asserts that \eqref{mario} is continuous if and only if it is compact (see \cite[Theorem 15.6.2]{Maz}). Thus, it is not possible to find open sets having the aforementioned interesting features.
\par
The borderline case $q=p$ would be interesting, but it will not be treated here, for a reason which we will explain in a while. For the moment, we just fix the distinguished notation
\[
\lambda_{p}(\Omega):=\inf_{\varphi\in C^\infty_0(\Omega)}\left\{\int_\Omega |\nabla\varphi|^p\,dx\, :\, \|\varphi\|_{L^p(\Omega)}=1\right\},
\]
for this case.
\vskip.2cm\noindent
Before explaining in details our scopes, a couple of disclaimers are in order. Firstly, in this paper we will use the term {\it extremals} to denote the minimizers for the quantity $\lambda_{p,q}(\Omega)$ defined above or, more generally, any function $u\in \mathscr{D}^{1,p}_0(\Omega)\setminus\{0\}$ such that
\[
\int_\Omega |\nabla u|^p\,dx=\lambda_{p,q}(\Omega)\,\left(\int_\Omega |u|^q\,dx\right)^\frac{p}{q}.
\]
Secondly, concerning the space $\mathscr{D}^{1,p}_0(\Omega)$, we recall that 
\begin{equation}
\label{mara}
\lambda_p(\Omega)>0\qquad \Longleftrightarrow\qquad \lambda_{p,q}(\Omega)>0,
\end{equation}
for $q$ as above (see for example \cite[Theorem 15.4.1]{Maz}).
Thus, for the sets we are interested in, the following two norms on $C^\infty_0(\Omega)$
\[
\|\nabla \varphi\|_{L^p(\Omega)}\qquad \text{and}\qquad \|\varphi\|_{W^{1,p}(\Omega)}=\|\varphi\|_{L^p(\Omega)}+\|\nabla \varphi\|_{L^p(\Omega)}, 
\]
are equivalent. Accordingly, the homogeneous space $\mathscr{D}^{1,p}_0(\Omega)$ coincides with the more familiar space $W^{1,p}_0(\Omega)$, i.e. the closure of $C^\infty_0(\Omega)$ in the standard (non-homogeneous) Sobolev space $W^{1,p}(\Omega)$. For this reason, from now on we will work directly with the space $W^{1,p}_0(\Omega)$.
\vskip.2cm\noindent
We can now get closer to the scopes of the manuscript.
In our recent paper \cite{BraBriPri}, we have shown that {\it Steiner symmetric open sets} (not coinciding with the whole $\mathbb{R}^N$) fall within the class of open sets having properties (E1)--(E3). Thus, for instance, we have existence of extremals for a {\it slab}, i.e. an open set of the form $\Omega=\mathbb{R}^{N-1}\times(-1,1)$ (for $p=N=2$ this result was originally contained in \cite{AmTo, BBT}). For this set the embedding \eqref{mario} clearly fails to be compact, because of the translation invariance along the first $N-1$ coordinate directions.
\par
In this paper, we wish to consider {\it periodically perforated open sets}, i.e. open sets obtained by removing from $\mathbb{R}^N$ a periodic array of translated copies of a fixed ``hole'' $K$. We refer to the next subsection for the precise definition (see Definition \ref{defi:apps_intro} below), but the reader could keep in mind the basic example
\begin{equation}
\label{forellato}
\Omega=\mathbb{R}^N\setminus \left(\bigcup_{\mathbf{i}\in\mathbb{Z}^N}\overline{B_r(\mathbf{i})}\right),\qquad \text{for some}\ 0< r<\frac{1}{2},
\end{equation}
just to fix ideas.
We will see in a moment that we can treat much more general situations, where spherical holes are replaced by arbitrary shapes. The only requirement will be that the ``hole'' $K$ must be non-negligible, in the sense of {\it $p-$capacity}. 
\begin{rem}
It is not difficult to see that the Poincar\'e constant $\lambda_p(\Omega)$ can not be attained for a set like \eqref{forellato}. For this reason, we do not consider the case $q=p$. Indeed, it is well-known that extremals would be ``unique'' in this case, in the sense that they form a one-dimensional vector space (see for example \cite[Theorem 1.3]{KLP}). This uniqueness property, in conjunction with the periodicity of the set, would in turn imply that extremals should be periodic, thus violating the membership to $W^{1,p}_0(\Omega)$.
\end{rem}

\subsection{Main results}
We now introduce the class of open sets we wish to consider in this paper. We fix at first some further notation. We set 
\[
Q_{1/2}=\left(-\frac{1}{2},\frac{1}{2}\right)\times\dots \times\left(-\frac{1}{2},\frac{1}{2}\right)=\left(-\frac{1}{2},\frac{1}{2}\right)^N.
\]
For every given vector $\mathbf{t}=(t_1,\dots,t_N)$ with $t_i>0$, we define the following anisotropic dilation operator
\[
\begin{array}{ccccl}
D_{\mathbf{t}}&:&\mathbb{R}^N&\to&\mathbb{R}^N\\
&& x&\mapsto& D_{\mathbf{t}}(x)=(t_1\,x_1,\dots,t_N\,x_N).
\end{array}
\]
Accordingly, for every $\mathbf{t}=(t_1,\dots,t_N)$ with $t_i>0$, we have
\[
D_{\mathbf{t}}(Q_{1/2})=\left(-\frac{t_1}{2},\frac{t_1}{2}\right)\times\dots\times \left(-\frac{t_N}{2},\frac{t_N}{2}\right).
\]
\begin{definition}
\label{defi:apps_intro}
Let $\mathbf{t}=(t_1,\dots,t_N)\in\mathbb{R}^N$ be such that $t_i>0$, for every $i\in\{1,\dots,N\}$. We also take a nonempty compact set $K\subsetneq \overline{Q_{1/2}}$. We say that $\Omega\subseteq\mathbb{R}^N$ is a {\it $\mathbf{t}-$periodically perforated open set, generated by $K$}, if it has the following form
\[
\Omega=\mathbb{R}^N\setminus\left(\bigcup_{\mathbf{i}\in\mathbb{Z}^N}D_{\mathbf{t}}(\mathbf{i}+K)\right),
\] 
see Figures \ref{fig:cubo_base} and \ref{fig:periodic_set}.
\end{definition}
\begin{figure}
\includegraphics[scale=.2]{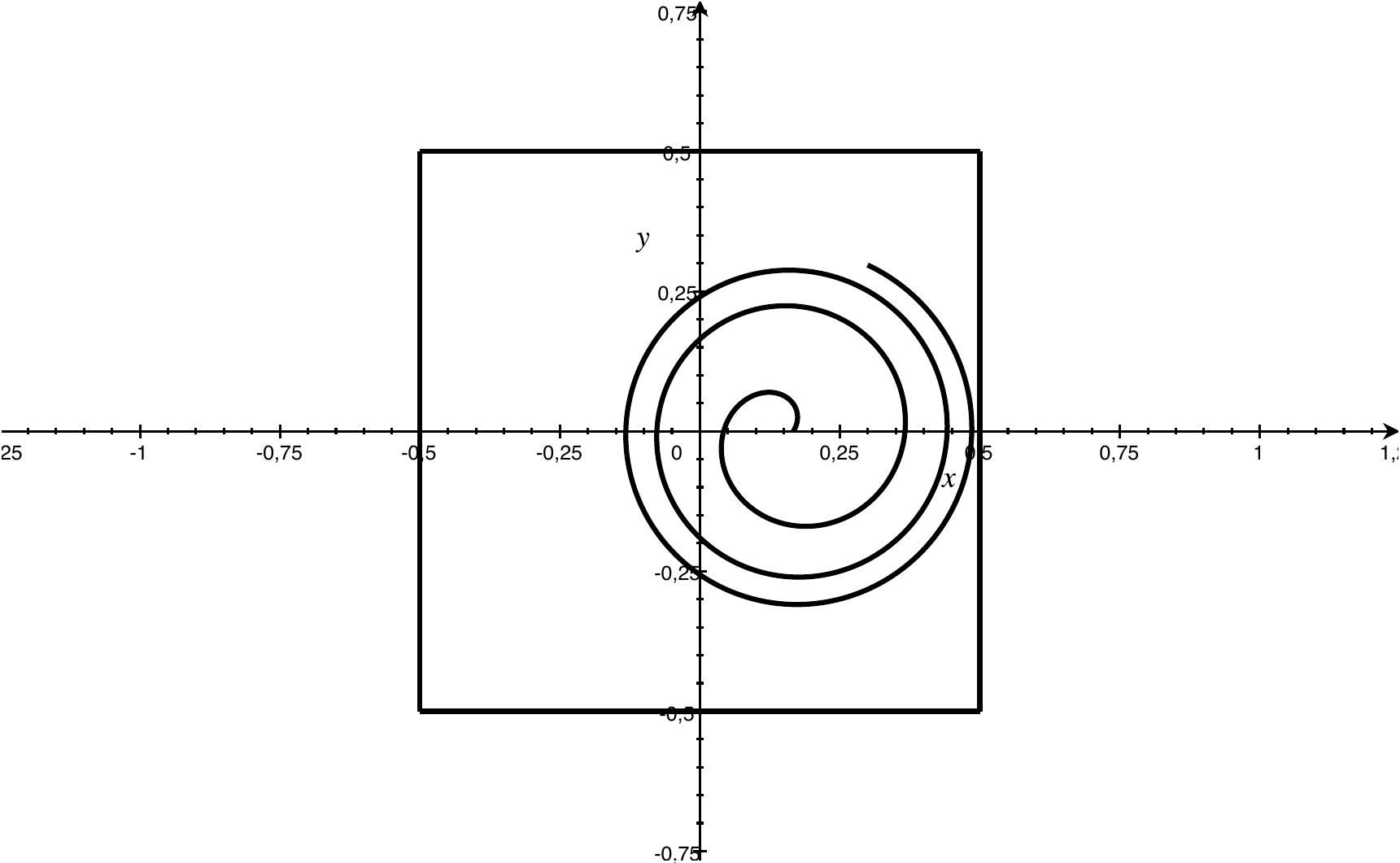}
\caption{The basic cube $Q_{1/2}$ containing a compact set $K$ (in bold line).}
\label{fig:cubo_base}
\end{figure}
\begin{figure}
\includegraphics[scale=.2]{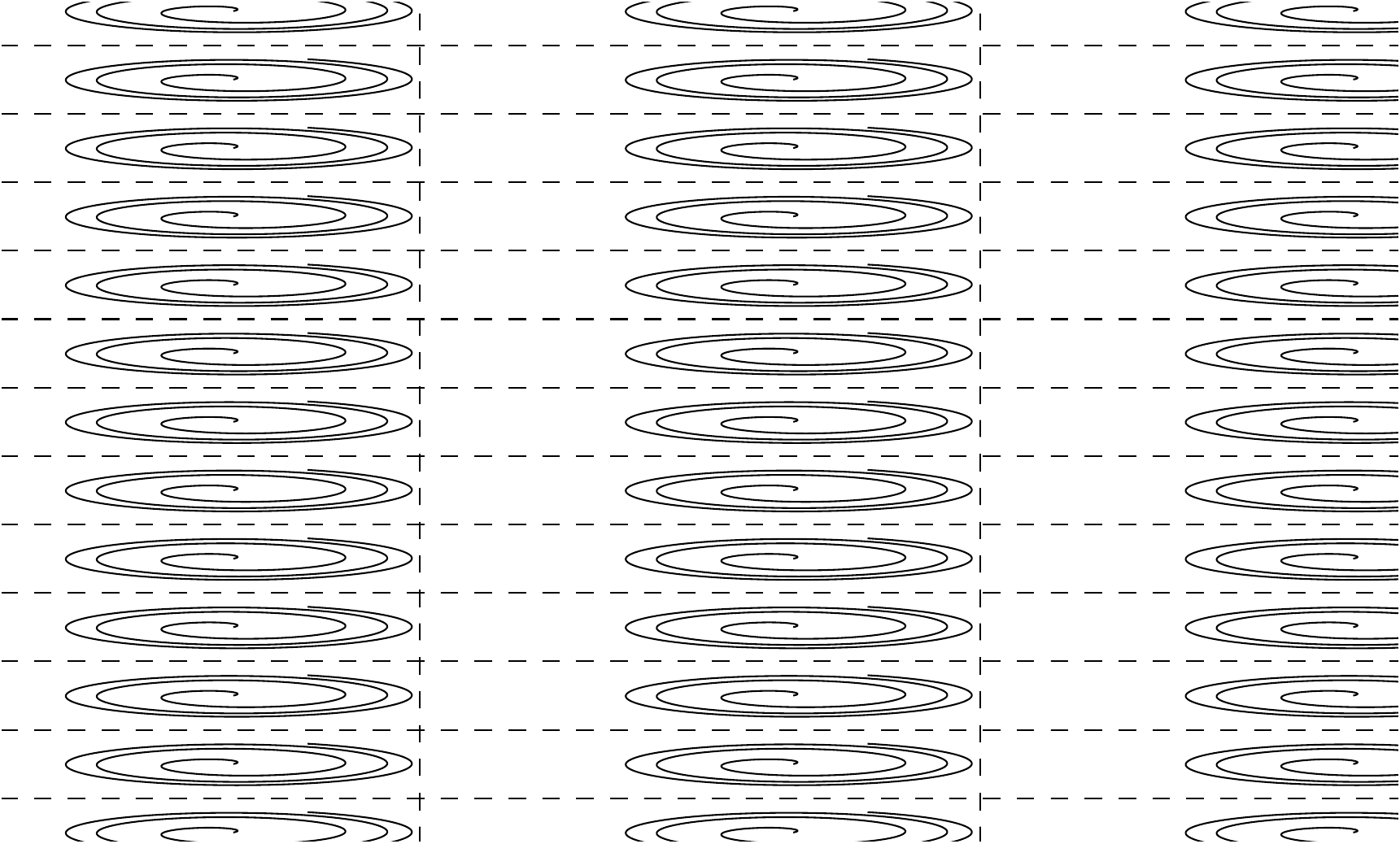}
\caption{A periodically perforated open set $\Omega\subseteq\mathbb{R}^2$, generated by $K$. Here we took $t_1=2$, $t_2=1/2$, i.e. the set is obtained by gluing together translated copies of the cube in Figure \ref{fig:cubo_base}, which has been stretched horizontally by a factor $2$ and compressed vertically by a factor $1/2$.}
\label{fig:periodic_set}
\end{figure}
\begin{rem}
Observe that, by construction, a $\mathbf{t}-$periodically perforated open set, generated by $K$, is $t_i-$periodic in direction $\mathbf{e}_i$, for every $i\in\{1,\dots,N\}$. In other words, we have
\[
\Omega=\Omega+k\,t_i\,\mathbf{e}_i,\qquad \text{for every}\ k\in\mathbb{Z},\ i\in\{1,\dots,N\}.
\]
\end{rem}
With the previous definition at hand, we can state the main existence result of this paper.
\begin{main}
Let $1<p<\infty$ and assume that  $q>p$ satisfies \eqref{exponents}. Let $\mathbf{t}=(t_1,\dots,t_N)\in\mathbb{R}^N$ be such that $t_i>0$, for every $i\in\{1,\dots,N\}$ and let $K\subsetneq \overline{Q_{1/2}}$ be a compact set such that
\[
\mathrm{cap}_p(K; Q_1)>0.
\]
Let $\Omega\subseteq\mathbb{R}^N$ be a $\mathbf{t}-$periodically perforated open set, generated by $K$.
Then, there exists a positive extremal $u\in W^{1,p}_0(\Omega)\cap L^\infty(\Omega)$.
\end{main}
\begin{rem}
The capacitary assumption on $K$ is sharp, i.e. if the condition is not satisfied then we can not have existence of extremals. Indeed, in this case we would have $\lambda_p(\Omega)=0$ (see Lemma \ref{lm:nonbanale} below) and thus, on account of \eqref{mara}, it would result $\lambda_{p,q}(\Omega)=0$, as well. We also recall that our extremals have exponential decay at infinity, on account of \cite[Theorem 7.3]{BraBriPri}. 
\end{rem}
The reader will find the proof of the previous result in Section \ref{sec:6}: for ease of readability, we found it useful to treat separately the case $q<\infty$ (Theorem \ref{teo:main}) and the extremal case $q=\infty$ (Theorem \ref{teo:maininfty}).
\par
We refer to the next subsection for some comments on the proof and a comparison with existing related results.
Here, we rather want to anticipate a possible reader's objection: at a first glance, the class of open sets considered by our Main Theorem may seem very specific. 
On the one hand, once proved this result, we will show how to apply it in order to deduce (with a minimal effort) the existence of extremals for a much wider class of open sets: these are periodic only in the first $k$ coordinate directions $\mathbf{e}_1,\dots,\mathbf{e}_k$ and bounded in the others $\mathbf{e}_{k+1},\dots,\mathbf{e}_N$. 
As a particular case, we can retrieve (with a different proof) the existence result given by Esteban in \cite[Theorems 1 and 6]{Es}, in the case of a power-type nonlinearity. We refer the reader to Theorem \ref{teo:pupazzo} and Corollary \ref{coro:Es} for the precise statements, as well as the proofs.
\par
On the other hand, we will give a couple of examples, obtained by slightly perturbing the ``model'' set \eqref{forellato}, which  show that, in general,  breaking the periodicity structure will  lead to a non-existence result. More precisely, by enlarging one single hole in \eqref{forellato}, one can easily prove that existence of extremals is lost (see Section \ref{exa:perforato_exa}). A curious phenomenon appears if, on the contrary, we shrink a little bit a single hole in \eqref{forellato}: in this case, we still have existence of extremals for the new set $\widetilde\Omega$ (see Section \ref{exa:perforato_exa2}). This is due to the following subtle fact, which deserves a comment: 
one can show that 
\[
\lambda_{p,q}(\widetilde\Omega)\lneq \lim_{R\to+\infty} \lambda_{p,q}(\widetilde\Omega\setminus \overline{B_R}).
\]
Since the global Poincar\'e-Sobolev constant is strictly less than that ``at infinity'', we get that minimizing sequences have the interest to stay ``confined'' and not to escape at infinity. This results in a gain of compactness and thus existence of extremals  holds. 
\par
This situation is however much simpler than that of our Main Theorem, where the previous argument does not apply: indeed, because of the periodicity, in general we have that  the two Poincar\'e-Sobolev constants above do coincide in the case of our sets. 
\subsection{Comments on the proof}
It is fair to declare that our main result is not a complete novelty. In order to put it in the right context, we first recall that an extremal $u$ for $\lambda_{p,q}(\Omega)$ is a weak {\it minimal energy solution} of the Lane-Emden equation 
\begin{equation}
\label{renato}
-\Delta_p v=|v|^{q-2}\,v,\ \text{in}\ \Omega,\qquad v=0,\ \text{on}\ \partial\Omega,
\end{equation}
up to a suitable ``vertical'' scaling. More precisely, if $u$ is an extremal with unit $L^q(\Omega)$ norm and we set
\[
v=\Big(\lambda_{p,q}(\Omega)\Big)^\frac{1}{q-p}\,u,
\]
then $v$ solves \eqref{renato}. Moreover, it minimizes the free energy functional
\[
\mathfrak{F}_{p,q}(\varphi)=\frac{1}{p}\,\int_\Omega |\nabla \varphi|^p\,dx-\frac{1}{q}\,\int_\Omega |\varphi|^q\,dx,\qquad \mbox{ for every } \varphi\in W^{1,p}_0(\Omega),
\]
among all the nontrivial critical points of $\mathfrak{F}_{p, q}$.
\vskip.2cm\noindent
With these clarifications  in mind, the existence of solutions of minimal energy for the following slightly modified equation (here $\lambda>0$)
\[
-\Delta_p v+\lambda\,|v|^{p-2}\,v=|v|^{q-2}\,v,\qquad \text{in}\ \Omega,
\]
in the case of {\it periodic open sets} has been obtained in \cite{LTW} and \cite{Cha} for $p=2$, then generalized in \cite{HuLi} to the case $1<p<N$. The result \cite[Theorem 4.8]{LTW} is obtained through a {\it Struwe--type global compactness result}, giving a precise description of Palais-Smale sequences for the free energy functional
\[
\mathfrak{F}_{2,q,\lambda}(\varphi)=\frac{1}{2}\,\int_\Omega |\nabla \varphi|^2\,dx+\frac{\lambda}{2}\,\int_\Omega |\varphi|^2\,dx-\frac{1}{q}\,\int_\Omega |\varphi|^q\,dx,
\]
naturally associated to the modified equation. We refer to \cite{St} for the original result by Struwe, dealing with the critical case $q=2^*$. 
\par
This same existence result has then be reproved by Chabrowski in \cite[Corollary 1]{Cha}, by using a concentration-compactness principle ``at infinity'', in order to prove existence of solutions for the minimization problem
\[
\inf_{\varphi\in W^{1,2}_0(\Omega)}\left\{\int_\Omega |\nabla\varphi|^2\,dx+\lambda\,\int_\Omega |\varphi|^2\,dx\, :\, \|\varphi\|_{L^q(\Omega)}=1\right\}.
\]
The extension to the case $1<p<N$ (see \cite[Theorem 4]{HuLi}) relies on the same arguments by Chabrowski, see also Smets' paper \cite{Sm} for a similar extension. We point out that, in all these results, the assumption $\lambda>0$ can not be removed\footnote{This can be easily seen by observing that these results are stated to hold for $\Omega=\mathbb{R}^N$, as well, where obviously $\lambda_{p,q}$ is zero and can not be attained. In \cite{Cha} the importance of the assumption $\lambda>0$ can be detected in the proof of Theorem 2 there, at beginning of page 506, in the argument used to exclude that the limit of a minimizing sequence identically vanishes. Indeed, the author applies a Poincar\'e-Sobolev inequality of the type 
\[
\|\varphi\|^2_{L^q(E)}\lesssim \|\nabla \varphi\|_{L^2(E)}+\lambda\,\|\varphi\|_{L^2(E)},
\]
for bounded regular sets $E\subseteq\mathbb{R}^N$ and functions {\it not vanishing at the boundary}. For $\lambda=0$ this can not hold. The same argument is at the bottom of \cite[page 69]{HuLi}, for the case $1<p<N$.}.
\vskip.2cm\noindent
In order to highlight the main differences with these results, we first point out that we cover the whole range $1<p<\infty$ and $q>p$ verifying \eqref{exponents}, at the same time. Thus, for example, we can explicitly treat the ``endpoint'' case of Morrey--type $\lambda_{p,\infty}$, occurring for $p>N$. In the case of open bounded sets, some interesting studies on this constant have been pursued in \cite{EP, HL}.
\par
More interestingly, our existence proof is elementary and does not rely neither on concentration-compactness arguments, nor on Struwe's technique. As in our previous paper \cite{BraBriPri}, we will rather adapt to our problem a technique used by Lieb in \cite{LiHLS}, in order to show existence of extremals for the Hardy-Littlewood-Sobolev inequality. The crucial step is to obtain a result in the vein of \cite[Lemma 2.7]{LiHLS}. In a nutshell, the program goes as follows: if we can construct a minimizing sequence $\{u_n\}_{n\in\mathbb{N}}$ for $\lambda_{p,q}(\Omega)$ such that:
\begin{itemize}
\item[(i)] $\{u_n\}_{n\in\mathbb{N}}$ converges almost everywhere to $u$;
\vskip.2cm
\item[(ii)] $\{\nabla u_n\}_{n\in\mathbb{N}}$ converges almost everywhere to $\nabla u$;
\vskip.2cm
\item[(iii)] the limit $u$ is not trivial;
\end{itemize}
then we can infer existence of an extremal. We remark that the subadditivity of the function $t\mapsto t^{p/q}$ enters as a crucial ingredient of the proof of this implication (here our standing assumption $q>p$ is important). 
\par
It is well-known that the almost everywhere convergence of $u_n$ can be easily inferred: it is sufficient to use the compactness of the {\it local} Sobolev embeddings, together with a diagonal argument. On the contrary, property (ii) {\it can not} be expected to hold in general, for a minimizing sequence. We circumvent this problem by employing the same trick successfully  exploited in \cite{BraBriPri} i.e. we will apply this reasoning not to any minimizing sequence, but to a {\it particular one}, which comes from a minimization problem containing an additional confining term. Thus, the relevant optimality condition assures the required convergence, as the small confinement parameter goes to $0$.
\par
However, the most delicate point is to get property (iii): here, we will once more take advantage of the fact that $q$ is super-homogeneous and adapt to our setting the classical ``weak compactness up to translations'' result, again proved by Lieb in the case of the whole $\mathbb{R}^N$ (see \cite[Lemma 6]{Li}). In proving this kind of result for our periodically perforated sets, we will give a different proof of the celebrated result \cite[Corollary 4]{Li}, yet again due to Lieb, asserting that:\vskip.2cm 
{\it if $\Omega\subseteq\mathbb{R}^N$ has a ``small'' Poincar\'e constant, then it contains a ``big'' portion of a ``large'' ball}. 
\vskip.2cm\noindent
The proof will rely on some capacitary--type techniques pioneered by Maz'ya: we think that this part of the paper (contained in Section \ref{sec:3}) is interesting in itself.

\subsection{Plan of the paper}

In Section \ref{sec:2} we introduce the basic notation needed in the paper and show a few preliminary results. We also discuss some properties of a periodically perforated open set $\Omega$ as in Definition \ref{defi:apps_intro}. In particular, we prove that, for such a set,  the  condition  $\mathrm{cap}_p(K; Q_1)>0$  is equivalent to having  $\lambda_p(\Omega)>0$. As announced above, Section \ref{sec:3} contains a novel proof of \cite[Corollary 4]{Li}, which exploits the concept of {\it capacitary inradius}, introduced by Maz'ya. 
\par
Then, Section \ref{sec:4} deals with a general compactness result which is suitable to our periodic setting. This is analogous to \cite[Lemma 6]{Li}, the latter dealing with the case $W^{1,p}(\mathbb{R}^N)$. The last ingredient to apply the aforementioned Lieb's scheme  is discussed in Section \ref{sec:5}: there, we construct an {\it ad hoc} minimizing sequence,  whose elements  are   minimizers of suitably penalized problems. %by exploiting a {\it vanishing confinement} potential.
\par
The  main results of this paper are given in Section \ref{sec:6} and  in Section \ref{sec:7}. In Section \ref{sec:8},  we discuss the different behavior, in terms of  existence of extremals,   of the periodic open set  \eqref{forellato} when we change the radius of a single hole. A technical fact concerning Poincar\'e-Sobolev constants ``at infinity'' is the content of Appendix \ref{sec:A}, which closes the paper.

\begin{ack}
We thank Francesco Bozzola for some conversations on the contents of Section \ref{sec:3}.
L.\,Briani and F.\, Prinari are both members of the {\it Gruppo Nazionale per l'Analisi Matematica, la Probabilit\`a
e le loro Applicazioni} (GNAMPA) of the Istituto Nazionale di Alta Matematica (INdAM). F.\,Prinari gratefully acknowledges the financial support of the project GNAMPA 2025  ``Ottimizzazione Spettrale, Geometrica e Funzionale" ({\tt CUP E5324001950001}).  
\par
The research of L.~Briani was supported by the DFG through the Emmy Noether Programme (project number 509436910).  
\end{ack}

\section{Preliminaries}
\label{sec:2}
\subsection{Notation}
In this paper, we will always assume that the dimension $N$ of the ambient space is $N\ge 2$. For $x_0\in\mathbb{R}^N$ and $r>0$, we will set
\[
B_r(x_0)=\Big\{x\in\mathbb{R}^N\,:\, |x-x_0|<r\Big\}.
\]
When the ball is centered at the origin, we will omit to indicate the center and simply write $B_r$. 
Analogously, we set
\[
Q_r=(-r,r)\times\dots\times(-r,r)=(-r,r)^N,
\]
and
\[
Q_r(x_0)=Q_r+x_0=\prod_{i=1}^N (x_0^i-r,x_0^i+r),\qquad \text{for}\ x_0=(x_0^1,\dots,x_0^N)\in\mathbb{R}^N.
\]
We recall that, fixed $1\le p<\infty$, an open set $E\subseteq\mathbb{R}^{N}$ and a compact subset $K\subseteq E$, the {\it $p-$capacity of $K$ relative to $E$} is defined as 
\[
\mathrm{cap}_p(K;E)=\inf_{u\in C^\infty_0(E)}\left\{\int_{E}|\nabla u|^p\,dx\,:\, u\ge 1\ \text{on}\ K\right\}.
\]
We refer to \cite[Chapter 2, Section 2]{Maz} and \cite[Chapter 13]{Maz} for the properties of $p-$capacity.
\subsection{Two technical results}
We start with the following simple result, which gives a rough estimate of how Poincar\'e-Sobolev constants are affected by an affine transformation.
\begin{lm}
\label{lm:affine}
Let $E\subseteq\mathbb{R}^N$ be an open set. Let $T:\mathbb{R}^N\to\mathbb{R}^N$ be an invertible affine map, i.e. there exist an invertible matrix $A\in\mathcal{M}_N(\mathbb{R})$ and a vector $\mathbf{b}\in\mathbb{R}^N$ such that
\[
T(x)=A\cdot x+\mathbf{b},\qquad \text{for every} \ x\in\mathbb{R}^N.
\]
Then for every $1\le p<\infty$ and $q\ge 1$ satisfying \eqref{exponents}
%\[
%1\le q \left\{\begin{array}{ll}
%< p^*,& \mbox{ if } p<N,\\
%<\infty, & \mbox{ if } p=N,\\
%\le \infty,& \mbox{ if } p> N,
%\end{array}
%\right.
%\] 
we have\footnote{When $q=\infty$, we agree that $(q-p)/q=1$.} 
\[
\frac{|\det A|^\frac{q-p}{q}}{\ell^\frac{p}{2}}\,\lambda_{p,q}(E)\ge \lambda_{p,q}(T(E))\ge \frac{|\det A|^\frac{q-p}{q}}{L^\frac{p}{2}}\,\lambda_{p,q}(E),
\]
where $L\ge \ell>0$ are the maximal and minimal eigenvalues of the symmetric positive definite matrix $A^{\rm T}\cdot A$.
\end{lm}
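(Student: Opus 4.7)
The plan is a direct change of variables. Given $\psi\in C^\infty_0(E)$, I would define $\varphi(y)=\psi(T^{-1}(y))$, which belongs to $C^\infty_0(T(E))$ since $T$ is an affine bijection. The chain rule gives $\nabla\varphi(y)=(A^{-1})^{\mathrm T}\nabla\psi(T^{-1}(y))$, so pointwise
\[
|\nabla\varphi(y)|^2=\bigl\langle (A\,A^{\mathrm T})^{-1}\nabla\psi(T^{-1}(y)),\,\nabla\psi(T^{-1}(y))\bigr\rangle.
\]
Since $A\,A^{\mathrm T}$ and $A^{\mathrm T}\,A$ have the same eigenvalues, those of $(A\,A^{\mathrm T})^{-1}$ lie in $[1/L,1/\ell]$, whence
\[
\frac{1}{L}\,|\nabla\psi(T^{-1}(y))|^2\le |\nabla\varphi(y)|^2\le \frac{1}{\ell}\,|\nabla\psi(T^{-1}(y))|^2.
\]

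Next I would integrate using $y=T(x)$ and $dy=|\det A|\,dx$. This yields
\[
\int_{T(E)}|\nabla\varphi|^p\,dy\le \ell^{-p/2}\,|\det A|\,\int_E|\nabla\psi|^p\,dx,
\]
and for $q<\infty$
\[
\int_{T(E)}|\varphi|^q\,dy=|\det A|\,\int_E|\psi|^q\,dx,
\]
while for $q=\infty$ we simply have $\|\varphi\|_{L^\infty(T(E))}=\|\psi\|_{L^\infty(E)}$. Dividing and recalling that $(q-p)/q=1-p/q$ (with the convention $(q-p)/q=1$ when $q=\infty$), I obtain
\[
\frac{\int_{T(E)}|\nabla\varphi|^p\,dy}{\Big(\int_{T(E)}|\varphi|^q\,dy\Big)^{p/q}}\le \frac{|\det A|^{(q-p)/q}}{\ell^{p/2}}\,\frac{\int_E|\nabla\psi|^p\,dx}{\Big(\int_E|\psi|^q\,dx\Big)^{p/q}}.
\]
Taking the infimum over $\psi\in C^\infty_0(E)$ on the right and noting that as $\psi$ ranges over $C^\infty_0(E)$ the function $\varphi$ ranges over all of $C^\infty_0(T(E))$, I get
\[
\lambda_{p,q}(T(E))\le \frac{|\det A|^{(q-p)/q}}{\ell^{p/2}}\,\lambda_{p,q}(E),
\]
which is the upper estimate of the statement.

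For the reverse inequality I would apply the same argument to $T^{-1}$: it is affine with linear part $A^{-1}$, and the eigenvalues of $(A^{-1})^{\mathrm T}A^{-1}=(A\,A^{\mathrm T})^{-1}$ are $1/L\le 1/\ell$, while $|\det A^{-1}|=|\det A|^{-1}$. The upper estimate just proved, applied to the set $T(E)$ and the map $T^{-1}$, reads
\[
\lambda_{p,q}(E)=\lambda_{p,q}(T^{-1}(T(E)))\le \frac{|\det A|^{-(q-p)/q}}{(1/L)^{p/2}}\,\lambda_{p,q}(T(E)),
\]
which rearranges precisely to the lower bound $\lambda_{p,q}(T(E))\ge |\det A|^{(q-p)/q}\,L^{-p/2}\,\lambda_{p,q}(E)$. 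The only subtlety is the bookkeeping of exponents (in particular for the limiting case $q=\infty$, where no Jacobian enters the $L^\infty$ norm but the convention $(q-p)/q=1$ makes the statement consistent); there is no real obstacle, as the whole argument reduces to the change-of-variables formula plus the spectral bound for quadratic forms.
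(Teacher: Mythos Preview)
Your proof is correct and follows exactly the approach the paper intends: a change of variables together with the spectral bounds $\ell\,|z|^2\le\langle A^{\mathrm T}Az,z\rangle\le L\,|z|^2$ applied to the gradient. The paper merely records these two quadratic-form inequalities and leaves the remaining bookkeeping to the reader; you have carried it out in full, including the $q=\infty$ case and the symmetry argument via $T^{-1}$ for the lower bound.
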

\begin{proof}
The proof simply follows by using a change of variable and the following facts:
\[
|A\cdot z|^2=\langle A\cdot z,A\cdot z\rangle=\langle A^{\rm T}\cdot A\cdot z,z\rangle\ge \ell\,|z|^2,\qquad \text{for every}\ z\in\mathbb{R}^N
\]
and
\[
|A^{-1}\cdot z|^2=\langle A^{-1}\cdot z,A^{-1}\cdot z\rangle=\langle (A^{-1})^{\rm T}\,A^{-1}\cdot z,z\rangle\ge \frac{1}{L}\,|z|^2,\qquad \text{for every}\ z\in\mathbb{R}^N.
\]
The details are left to the reader.
\end{proof}
The next result is specific of the case $q>p$. It concerns the Poincar\'e-Sobolev constants of disjoint sets and can be seen as a refinement of \cite[Proposition 2.6]{BraFra}.  
\begin{lm}
\label{lm:disgiunti} 
Let $1<p<\infty$ and assume that  $q>p$ satisfies \eqref{exponents}. Let $ \{\Omega_i\}_{i\in\mathbb{N}}$ be a family of open subsets of $\mathbb{R}^N$ such that $\Omega_i\cap \Omega_j=\emptyset$, for every $i\not=j$. We set
\[
\Omega=\bigcup_{i\in\mathbb{N}}\Omega_i,
\]
and suppose that there exists an extremal $u\in W^{1,p}_0(\Omega)$ for $\lambda_{p,q}(\Omega)$, with $\|u\|_{L^q(\Omega)}=1$. Then there exists $i_0\in\mathbb{N}$ such that 
\[
\|u\|_{L^q(\Omega_{i_0})}=1\qquad \text{and}\qquad \|u\|_{L^q(\Omega_i)}=0,\quad \text{for}\ i\not=i_0.
\]
\end{lm}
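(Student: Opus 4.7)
\medskip

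The plan is to split $u$ into its natural pieces on each $\Omega_i$, bound the Dirichlet energy of each piece by $\lambda_{p,q}(\Omega_i)\ge \lambda_{p,q}(\Omega)$, and then exploit the strict subadditivity of $t\mapsto t^{p/q}$ (this is where $q>p$ is crucial) to force all but one piece to vanish.

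First I would set $u_i:=u\,\mathbf{1}_{\Omega_i}$ for every $i\in\mathbb{N}$. Since the $\Omega_i$ are pairwise disjoint open sets, each $\Omega_i$ is clopen in the topological subspace $\Omega=\bigcup_j\Omega_j$. Consequently, for any $\varphi\in C^\infty_0(\Omega)$, the function $\varphi\,\mathbf{1}_{\Omega_i}$ lies in $C^\infty_0(\Omega_i)$, and by taking an approximating sequence of $u$ in $C^\infty_0(\Omega)$ one shows that $u_i\in W^{1,p}_0(\Omega_i)$ with $\nabla u_i=(\nabla u)\,\mathbf{1}_{\Omega_i}$ almost everywhere. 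Moreover, the disjointness of the supports gives the decompositions
\[
1=\int_\Omega |u|^q\,dx=\sum_{i\in\mathbb{N}}\int_{\Omega_i} |u_i|^q\,dx,\qquad \lambda_{p,q}(\Omega)=\int_\Omega |\nabla u|^p\,dx=\sum_{i\in\mathbb{N}}\int_{\Omega_i} |\nabla u_i|^p\,dx,
\]
(in the case $q<\infty$).

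Next, set $a_i:=\int_{\Omega_i}|u_i|^q\,dx$. For every index $i$ with $a_i>0$, the function $u_i/a_i^{1/q}$ is admissible for $\lambda_{p,q}(\Omega_i)$, so
\[
\int_{\Omega_i} |\nabla u_i|^p\,dx\ge \lambda_{p,q}(\Omega_i)\,a_i^{p/q}\ge \lambda_{p,q}(\Omega)\,a_i^{p/q},
\]
where the second inequality uses that $\Omega_i\subseteq\Omega$ (and hence $W^{1,p}_0(\Omega_i)\hookrightarrow W^{1,p}_0(\Omega)$ by zero extension). Summing these inequalities over $i$ and dividing by $\lambda_{p,q}(\Omega)>0$ yields
\[
\sum_{i\in\mathbb{N}} a_i^{p/q}\le 1.
\]
Since $a_i\in[0,1]$ with $\sum a_i=1$, and since $q>p$ gives $p/q<1$, we have $a_i^{p/q}\ge a_i$ with equality if and only if $a_i\in\{0,1\}$. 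Therefore
\[
1=\sum_{i\in\mathbb{N}} a_i\le \sum_{i\in\mathbb{N}} a_i^{p/q}\le 1,
\]
which forces $a_i\in\{0,1\}$ for every $i$, and combined with $\sum a_i=1$ gives the desired conclusion: a unique index $i_0$ with $a_{i_0}=1$.

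The case $q=\infty$ follows by a variant of the same argument: letting $s_i:=\|u_i\|_{L^\infty(\Omega_i)}$ with $\max_i s_i=1$, the inequality $\int|\nabla u_i|^p\,dx\ge \lambda_{p,\infty}(\Omega)\,s_i^p$ and summation yield $\sum_i s_i^p\le 1$; since the maximum already contributes $1$, all other $s_i$ must vanish. I expect no serious obstacle here, the only delicate verification being the clopen/decomposition argument ensuring $u_i\in W^{1,p}_0(\Omega_i)$; the heart of the proof is the elementary but sharp subadditivity inequality $a_i^{p/q}\ge a_i$ on $[0,1]$, which is the precise manifestation of the super-homogeneous condition $q>p$.
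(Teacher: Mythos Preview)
Your proof is correct and uses the same ingredients as the paper's (restriction to each $\Omega_i$, the bound $\int_{\Omega_i}|\nabla u|^p\ge\lambda_{p,q}(\Omega_i)\,a_i^{p/q}\ge\lambda_{p,q}(\Omega)\,a_i^{p/q}$, and the strict concavity of $t\mapsto t^{p/q}$); you simply organize it as a direct sandwich $1=\sum a_i\le\sum a_i^{p/q}\le 1$ with equality characterization, whereas the paper argues by contradiction via $\sum a_i^{p/q}>1$. One small point in the $q=\infty$ case: a priori you only have $\sup_i s_i=1$, not $\max_i s_i=1$; the paper invokes continuity of $u$ (Morrey, since $p>N$) to locate a maximum point, but in fact the max is already forced by your inequality $\sum_i s_i^p\le 1$, since if no $s_i$ equalled $1$ one could extract infinitely many indices with $s_i>1/2$, contradicting summability.
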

\begin{proof}
We distinguish two cases: either $q<\infty$ or $q=\infty$.
\vskip.2cm\noindent
{\it Case $q<\infty$}. Since $u\not\equiv 0$ we have that 
\[
\int_{\Omega_{i_0}}|u|^q\,dx>0,
\]
for some $i_0\in\mathbb{N}$. It is enough to prove that $\|u\|_{L^q(\Omega_{i_0})}=1$. We argue by contradiction and suppose that the latter does not hold true. This means that 
\begin{equation}\label{entrambipositivi}
\int_{\Omega_{i_0}}|u|^q\,dx>0\qquad \text{and}\qquad\sum_{j\neq i_0}\int_{\Omega_j}|u|^q\,dx>0.
\end{equation}
Since $q>p$, by strict subadditivity of the map $\tau\mapsto \tau^{p/q}$ we have the following inequality
\[
\begin{split}
1=\left(\int_{\Omega}|u|^q\,dx\right)^{\frac{p}{q}}&=\left(\sum_{j\neq i_0}\int_{\Omega_j}|u|^q\,dx+\int_{\Omega_{i_0}}|u|^q\,dx\right)^{\frac{p}{q}}\\
&<\left(\sum_{j\neq i_0}\int_{\Omega_j}|u|^q\,dx\right)^{\frac{p}{q}}+\left(\int_{\Omega_i}|u|^q\,dx\right)^{\frac{p}{q}}\le \sum_{i\in\mathbb{N}}\left(\int_{\Omega_j}|u|^q\,dx\right)^{\frac{p}{q}}.
\end{split}
\]
The strict inequality holds thanks to \eqref{entrambipositivi}. We thus have obtained that
\[
1<\sum_{i\in\mathbb{N}}\left(\int_{\Omega_j}|u|^q\,dx\right)^{\frac{p}{q}}.
\]
By using this fact and the definition of $\lambda_{p,q}(\Omega_i)$, we get
\[
\begin{split}
\lambda_{p,q}(\Omega)&=\sum_{i\in\mathbb{N}}\int_{\Omega_i}|\nabla u|^{p}\,dx\ge\sum_{i\in\mathbb{N}} \lambda_{p,q}(\Omega_i)\left(\int_{\Omega_i}|u|^q\,dx\right)^{\frac{p}{q}}>\inf_{i\in\mathbb{N}}\lambda_{p,q}(\Omega_i).
\end{split}
\]
On the other hand, by monotonicity we have
\[
\lambda_{p,q}(\Omega)\le\lambda_{p,q}(\Omega_i),\qquad \text{for every}\ i\in\mathbb{N}.
\]
Thus, we get a contradiction and the result is proved when $q<\infty$.
\vskip.2cm\noindent
{\it Case $q=\infty$}. This case can occur only for $p>N$. Thanks to \cite[Lemma A.1]{BraBriPri}, there exists $x_0\in \Omega$ such that $|u(x_0)|=\|u\|_{L^\infty(\Omega)}=1$. In particular, there exists $i_0\in\mathbb{N}$ such that $x_0\in \Omega_{i_0}$. Accordingly, we get
\[
\|u\|_{L^\infty(\Omega_{i_0})}=1.
\]
Let us assume that there exists $j_0\not=i_0$ such that $\|u\|_{L^\infty(\Omega_{j_0})}>0$. Thus, we get
\[
\begin{split}
\lambda_{p,\infty}(\Omega)=\sum_{i\in\mathbb{N}}\int_{\Omega_i} |\nabla u|^p\,dx&\ge \int_{\Omega_{i_0}} |\nabla u|^p\,dx+\int_{\Omega_{j_0}} |\nabla u|^p\,dx\\
&\ge \lambda_{p,\infty}(\Omega_{i_0})\,\|u\|_{L^\infty(\Omega_{i_0})}^p+ \lambda_{p,\infty}(\Omega_{j_0})\,\|u\|_{L^\infty(\Omega_{j_0})}^p\\
&>\min\Big\{\lambda_{p,\infty}(\Omega_{i_0}),\lambda_{p,\infty}(\Omega_{j_0})\Big\}.
\end{split}
\]
As in the previous case, we get a contradiction with the fact that
\[
\lambda_{p,\infty}(\Omega)\le \min\Big\{\lambda_{p,\infty}(\Omega_{i_0}),\lambda_{p,\infty}(\Omega_{j_0})\Big\}.
\]
Thus, we can conclude that $\|u\|_{L^\infty(\Omega_{j})}=0$ for every $j\not= i_0$.
\end{proof}

\subsection{Periodically perforated sets}
We give some properties of periodically perforated sets, introduced in Definition \ref{defi:apps_intro}.
In what follows, for an open set $\Omega\subseteq\mathbb{R}^N$, we will indicate by $r_\Omega$ its {\it inradius}, defined by
\[
r_\Omega=\sup\{r>0\, :\, \exists\ B_r(x_0)\subseteq\Omega\}.
\]
\begin{lm}\label{lm:inradius}
Let $\Omega\subseteq\mathbb{R}^N$ be a $\mathbf{t}-$periodically perforated open set, generated by $K$. Then, we have 
\[
r_{\Omega}\leq \max  \{ t_1,t_2,\cdots, t_N\}\, \frac{\sqrt{N}}{2} .
\]
\end{lm}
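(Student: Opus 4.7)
The plan is to exploit the fact that the holes form a lattice, so every point of $\mathbb{R}^N$ is close to at least one hole. Since $K$ is nonempty, pick any point $y^*\in K\subseteq\overline{Q_{1/2}}$. Then, by the definition of $\Omega$, every point of the form $D_\mathbf{t}(\mathbf{i}+y^*)$ with $\mathbf{i}\in\mathbb{Z}^N$ belongs to the complement of $\Omega$. The whole argument consists in bounding, uniformly in $x_0$, the distance from $x_0$ to the nearest such lattice point.

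The main computation goes as follows. Given any $x_0\in\mathbb{R}^N$, consider $w:=D_\mathbf{t}^{-1}(x_0)-y^*\in\mathbb{R}^N$. Since $\mathbb{Z}^N$ is a lattice whose closed fundamental cell is $\overline{Q_{1/2}}$, I can pick $\mathbf{i}\in\mathbb{Z}^N$ such that $z:=w-\mathbf{i}\in\overline{Q_{1/2}}$, i.e.\ $|z_k|\le 1/2$ for every $k\in\{1,\dots,N\}$. Setting $p:=D_\mathbf{t}(\mathbf{i}+y^*)$, one directly computes
\[
|x_0-p|=|D_\mathbf{t}(z)|=\sqrt{\sum_{k=1}^N t_k^2\,z_k^2}\le \frac{1}{2}\sqrt{\sum_{k=1}^N t_k^2}\le \frac{\sqrt{N}}{2}\,\max_{1\le k\le N}t_k.
\]

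Now suppose $B_r(x_0)\subseteq\Omega$ for some $r>0$ and $x_0\in\mathbb{R}^N$. Applied to this $x_0$, the construction above produces a point $p\in\mathbb{R}^N\setminus\Omega$ at distance at most $(\sqrt{N}/2)\,\max_k t_k$ from $x_0$. Since $p\notin B_r(x_0)$, we deduce $r\le |x_0-p|\le (\sqrt{N}/2)\,\max_k t_k$. Taking the supremum over all admissible pairs $(x_0,r)$ yields the claimed bound on $r_\Omega$.

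There is no real obstacle here; the only mild care is in the lattice-point selection step, where one must use the closed cube $\overline{Q_{1/2}}$ rather than the open one to ensure that a suitable $\mathbf{i}\in\mathbb{Z}^N$ exists for every $w\in\mathbb{R}^N$. Everything else is an elementary computation using that $D_\mathbf{t}$ is the diagonal map with entries $t_1,\dots,t_N$.
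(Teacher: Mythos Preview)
Your proof is correct and follows essentially the same approach as the paper: both fix a point of $K$ and reduce to bounding the distance from an arbitrary point of $\mathbb{R}^N$ to the nearest image of that point under the lattice of translations. The paper phrases this reduction via monotonicity and translation invariance of the inradius and then leaves the final bound on $r_{\widetilde\Omega}$ as ``not difficult to see,'' whereas you carry out that computation explicitly via the coordinatewise estimate $|D_\mathbf{t}(z)|\le(\sqrt{N}/2)\max_k t_k$ for $z\in\overline{Q_{1/2}}$.
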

\begin{proof}
By definition, we see that the inradius is monotone non-decreasing with respect to the set inclusion. In other words, if $E_1\subseteq E_2$ are two open sets, then we have $r_{E_1}\le r_{E_2}$. Moreover, the inradius is clearly invariant by translations, i.e. for every $x_0\in\mathbb{R}^N$ we have $r_{E+x_0}=r_E$.
\par
We thus fix $x_0\in K$, then by construction we have
\[
\Omega\subseteq\mathbb{R}^N\setminus\left(\bigcup_{\mathbf{i}\in\mathbb{Z}^N}D_{\mathbf{t}}(\mathbf{i}+x_0)\right)=\left(\mathbb{R}^N\setminus\left(\bigcup_{\mathbf{i}\in\mathbb{Z}^N}D_{\mathbf{t}}(\mathbf{i})\right)\right)+D_\mathbf{t}(x_0)=:\widetilde{\Omega}+D_\mathbf{t}(x_0).
\]
From the two aforementioned properties, we then get $r_\Omega\le r_{\widetilde\Omega}$. It is now not difficult to see that 
\[
r_{\widetilde\Omega}\le \max  \{ t_1,t_2,\cdots, t_N\}\, \frac{\sqrt{N}}{2},
\]
thus concluding.
\end{proof}
\begin{rem}\label{controcroci}
Observe that the previous upper bound is independent of the generating set $K$ and optimal. Indeed, it becomes an identity if we take 
\[
\Omega=\mathbb{R}^N\setminus \mathbb{Z}^N,
\]
which corresponds to the choices $\mathbf{t}=(1,\dots,1)$ and $K=\{0\}$.
\par
On the contrary, it is not possible to give a {\it lower bound} of the same type. 
Indeed, for every $0<\varepsilon<1/2$, let us take $\mathbf{t}=(1,\dots,1)$ and $K=\overline{Q_{1/2-\varepsilon}}$. Accordingly, the corresponding periodically perforated open set is given by
\[
\Omega_\varepsilon=\mathbb{R}^N\setminus\left(\bigcup_{\mathbf{i}\in\mathbb{Z}^N}\overline{Q_{1/2-\varepsilon}(\mathbf{i})}\right).
\]
We have that $r_{\Omega_{\epsilon}}=\varepsilon\, \sqrt{N}$, which clearly vanishes in the limit as $\varepsilon$ goes to zero.
\end{rem}
For our open sets, the sharp condition for the validity of the Poincar\'e inequality is simply expressed in terms of a suitable capacitary requirement on the generating set $K$. This is the content of the following lemma. Even if not needed in this paper, we also include the case $p=1$, for completeness.
\begin{lm}
\label{lm:nonbanale}
Let $1\le p<\infty$ and let $\Omega\subseteq\mathbb{R}^N$ be a $\mathbf{t}-$periodically perforated open set, generated by $K$. We have that
\[
\lambda_p(\Omega)>0\qquad \Longleftrightarrow\qquad \mathrm{cap}_p(K; Q_1)>0.
\]
Moreover, there holds
\begin{equation}
\label{louerbaund}
\lambda_p(\Omega)\ge c_{N,p}\, \left(\frac{1}{\max\{t_1,\dots,t_N\}}\right)^p\,\mathrm{cap}_p(K;Q_1).
\end{equation}
\end{lm}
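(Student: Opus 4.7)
The plan is to establish the quantitative lower bound \eqref{louerbaund}: this directly yields the implication $\mathrm{cap}_p(K;Q_1)>0 \Rightarrow \lambda_p(\Omega)>0$, while the reverse follows from a standard capacity-zero removability argument.

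The first step is to reduce to the unit-lattice case. Setting $\widetilde\Omega = D_{\mathbf{t}}^{-1}(\Omega)$, which is the $(1,\dots,1)$-periodically perforated set generated by the same hole $K$, I would apply Lemma \ref{lm:affine} with $T=D_{\mathbf{t}}$ and $q=p$: since the Jacobian factor $|\det A|^{(q-p)/q}$ is trivial and the largest eigenvalue of $A^{\mathrm T} A=\mathrm{diag}(t_1^2,\dots,t_N^2)$ is $(\max_i t_i)^2$, one obtains
\[
\lambda_p(\Omega) \ge \frac{1}{(\max_i t_i)^p}\,\lambda_p(\widetilde\Omega).
\]
Hence it suffices to prove $\lambda_p(\widetilde\Omega) \ge c_{N,p}\,\mathrm{cap}_p(K;Q_1)$.

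For this, on each cell I would invoke Maz'ya's capacitary Poincar\'e inequality (see \cite[Section 14.1]{Maz}) on the cube $\mathbf{i}+Q_1$: for every $u\in C^\infty(\overline{\mathbf{i}+Q_1})$ that vanishes on $\mathbf{i}+K$,
\[
\int_{\mathbf{i}+Q_{1/2}} |u|^p\,dx \le \frac{c_{N,p}}{\mathrm{cap}_p(\mathbf{i}+K;\,\mathbf{i}+Q_1)}\int_{\mathbf{i}+Q_1} |\nabla u|^p\,dx,
\]
and by translation invariance of $p$-capacity the denominator equals $\mathrm{cap}_p(K;Q_1)$. Given $\varphi\in C^\infty_0(\widetilde\Omega)$, which vanishes in a neighborhood of each hole $\mathbf{i}+K$, the inequality applies on every cell. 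Since $\{\mathbf{i}+Q_{1/2}\}_{\mathbf{i}\in\mathbb{Z}^N}$ tiles $\mathbb{R}^N$ while $\{\mathbf{i}+Q_1\}_{\mathbf{i}\in\mathbb{Z}^N}$ covers $\mathbb{R}^N$ with bounded multiplicity $2^N$, summing over $\mathbf{i}$ yields
\[
\int_{\mathbb{R}^N}|\varphi|^p\,dx \le \frac{2^N\,c_{N,p}}{\mathrm{cap}_p(K;Q_1)}\int_{\mathbb{R}^N}|\nabla\varphi|^p\,dx,
\]
which is the desired bound.

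For the converse implication, suppose $\mathrm{cap}_p(K;Q_1)=0$. By translation invariance and countable subadditivity of $p$-capacity, the full periodic array $\bigcup_{\mathbf{i}\in\mathbb{Z}^N}D_{\mathbf{t}}(\mathbf{i}+K)$ has $p$-capacity zero in $\mathbb{R}^N$; by the classical removability property (see \cite[Chapter 13]{Maz}) we then have $W^{1,p}_0(\Omega)=W^{1,p}(\mathbb{R}^N)$. Since $\lambda_p(\mathbb{R}^N)=0$ by the dilation $\varphi(x)\mapsto\varphi(\lambda x)$ with $\lambda\to 0^+$, we conclude $\lambda_p(\Omega)=0$. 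The main technical obstacle is pinning down the correct scale-invariant form of Maz'ya's capacitary Poincar\'e inequality on $\mathbf{i}+Q_1$, in the version where the test function is \emph{not} required to vanish on the boundary of the cube, but only on the compact subset $K$ sitting inside it; once that inequality is in place, the rest is essentially bookkeeping.
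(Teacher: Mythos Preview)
Your forward direction (the bound \eqref{louerbaund}) is essentially the paper's argument: reduce to the unit lattice via Lemma \ref{lm:affine}, then apply Maz'ya's capacitary Poincar\'e inequality cell by cell. The only cosmetic difference is that the paper invokes the version of Maz'ya's inequality with both integrals over $\mathbf{i}+Q_{1/2}$, so the cubes tile without overlap and no $2^N$ multiplicity factor appears; your version with the gradient integrated over $\mathbf{i}+Q_1$ is equally valid and only changes the unspecified constant $c_{N,p}$.

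For the reverse implication your route is genuinely different. The paper argues by explicit construction: assuming $\mathrm{cap}_p(K;Q_1)=0$, it takes near-optimal capacitary potentials $\varphi_\varepsilon$ for the union of holes inside a large cube $Q_{k+1}$, and tests $\lambda_p$ with the function $\eta_k(1-\varphi_\varepsilon)$, where $\eta_k$ is a cut-off on $Q_{k+1/2}$; letting $k\to\infty$ forces $\lambda_p(E)=0$. Your argument is the conceptual counterpart: zero relative capacity of $K$ propagates (via bi-Lipschitz invariance under $D_{\mathbf t}$, translation, and finite subadditivity on any bounded region) to the whole hole array, and the classical removability theorem then gives $W^{1,p}_0(\Omega)=W^{1,p}(\mathbb{R}^N)$, whence $\lambda_p(\Omega)=\lambda_p(\mathbb{R}^N)=0$. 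Both are correct; the paper's approach is self-contained and quantitative, while yours is shorter and cleaner once one is willing to import the removability result. One phrasing to tighten: saying the \emph{full} (unbounded) periodic array has ``$p$-capacity zero in $\mathbb{R}^N$'' is not quite well-posed---what you actually need and use is the local statement that its intersection with any bounded set has zero relative capacity, which is exactly what drives the removability theorem.
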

\begin{proof}
We prove at first the estimate \eqref{louerbaund}. This would establish the validity of the implication 
\[
\mathrm{cap}_p(K; Q_1)>0\qquad \Longrightarrow\qquad \lambda_p(\Omega)>0.
\]
We use Lemma \ref{lm:affine} with $T=D_\mathbf{t}$ and
\begin{equation}
\label{E}
E=\mathbb{R}^N\setminus\left(\bigcup_{\mathbf{i}\in\mathbb{Z}^N}(\mathbf{i}+K)\right).
\end{equation}
Since $D_\mathbf{t}(E)=\Omega$, this gives immediately
\[
\lambda_p(\Omega)=\lambda_p(D_\mathbf{t}(E))\ge \left(\frac{1}{\max\{t_1,\dots,t_N\}}\right)^p\,\lambda_p(E).
\]
It is now not difficult to see that for every $\varphi\in C^\infty_0(E)$, we have
\[
\int_E |\nabla \varphi|^p\,dx=\sum_{\mathbf{i}\in \mathbb{Z}^N} \int_{\mathbf{i}+Q_{1/2}} |\nabla \varphi|^p\,dx\ge c_{N,p}\,\mathrm{cap}_p(K;Q_1)\,\sum_{\mathbf{i}\in \mathbb{Z}^N} \int_{\mathbf{i}+Q_{1/2}} |\varphi|^p\,dx,
\]
thanks to the Maz'ya-Poincar\'e inequality of \cite[Chapter 14, Theorem 14.1.2]{Maz}. By arbitrariness of $\varphi\in C^\infty_0(E)$, this gives
\[
\lambda_p(E)\ge c_{N,p}\,\mathrm{cap}_p(K;Q_1),
\]
and thus the desired conclusion follows.
\vskip.2cm\noindent
We now suppose that $\lambda_p(\Omega)>0$. 
We argue by contradiction and assume that $\mathrm{cap}_p(K;Q_1)=0$. We first observe that if $p>N$, then we would get the contradiction $K=\emptyset$, since every non-empty compact set has positive $p-$capacity in this case. 
\par
We can thus assume that $1<p\le N$. As above, we consider the set $E$ defined by \eqref{E} and $T=D_\mathbf{t}$.
By Lemma \ref{lm:affine}, we get that $\lambda_p(E)>0$, as well. 
Thanks to the assumption on $K$ and the monotonicity of the relative $p-$capacity (see \cite[page 142]{Maz}), we have 
\[
\mathrm{cap}_p(K;Q_M)=0,\qquad \text{for every}\ M\ge 1,
\]
as well. We choose $k\in\mathbb{N}\setminus\{0\}$ and use this fact with $M=k+1$. We also set
\begin{equation}
\label{Ztroncato}
\mathbb{Z}^N_k=\Big\{\mathbf{i}=(i_1,\dots,i_N)\in\mathbb{Z}^N\, :\, |\mathbf{i}|_{\ell^\infty}:=\max_{m=1,\dots,N}|i_m|\le k\Big\},
\end{equation}
and define
\[
K_k=\bigcup_{\mathbf{i}\in\mathbb{Z}^N_k} (\mathbf{i}+K).
\]
Observe that for every $\mathbf{i}\in \mathbb{Z}^N_k$, we have that $\mathbf{i}+K$ is a compact subset of $\mathbf{i}+Q_1=Q_1(\mathbf{i})$, thus by translation invariance of the $p-$capacity we still have
\[
\mathrm{cap}_p(\mathbf{i}+K;Q_1(\mathbf{i}))=\mathrm{cap}_p(K;Q_1)=0.
\] 
By observing that $Q_1(\mathbf{i})\subseteq Q_{k+1}$ for every $\mathbf{i}\in \mathbb{Z}^N_k$, we get as before
\[
\mathrm{cap}_p(\mathbf{i}+K;Q_{k+1})=0.
\]
By the subadditivity of the relative $p-$capacity (see \cite[page 143]{Maz}), this in turn implies that
\[
\mathrm{cap}_p(K_k;Q_{k+1})=0,
\]
as well.
Thus, for every $\varepsilon>0$ there exists $\varphi_\varepsilon\in C^\infty_0(Q_{k+1})$ such that
\[
\int_{Q_{k+1}} |\nabla \varphi_\varepsilon|^p\,dx<\varepsilon,\qquad 0\le \varphi_\varepsilon\le 1,\qquad \varphi_\varepsilon\equiv 1\ \text{on}\ K_k.
\]
We consider $\varphi_\varepsilon$ to be extended by $0$, outside $Q_{k+1}$.
Observe that by the Markov-Chebychev and Poincar\'e inequalities, we have
\[
\begin{split}
\left|\left\{x\in Q_{k-1/2}\, :\, \varphi_\varepsilon(x)>\frac{1}{2}\right\}\right|&\le 2^p\,\int_{Q_{k-1/2}} |\varphi_\varepsilon|^p\,dx\\
&\le 2^p\,\int_{Q_{k+1}}|\varphi_\varepsilon|^p\,dx\le \frac{2^p}{\lambda_p(Q_{k+1})}\,\int_{Q_{k+1}} |\nabla \varphi_\varepsilon|^p\,dx<\frac{2^p\,\varepsilon}{\lambda_p(Q_{k+1})}.
\end{split}
\]
Thus, by choosing 
\[
\varepsilon_k:=\frac{|Q_{k-1/2}|}{2^{p+1}}\,\lambda_p(Q_{k+1})=\frac{\left(2\,k-1\right)^{N}}{2^{p+1}}\,\frac{\lambda_p(Q_1)}{(k+1)^p},
\]
we get that 
\begin{equation}
\label{minorato}
\begin{split}
\int_{Q_{k-\frac{1}{2}}} |1-\varphi_\varepsilon|^p\,dx&\ge \frac{1}{2^p}\,\left|\left\{x\in Q_{k-1/2}\, :\, 0\le \varphi_\varepsilon(x)\le \frac{1}{2}\right\}\right|\\
&\ge \frac{|Q_{k-1/2}|}{2^{p+1}},\qquad \text{for every}\ 0<\varepsilon\le \varepsilon_k.
\end{split}
\end{equation}
We now take a cut-off function $\eta_k\in C^\infty_0(Q_{k+1/2})$ such that 
\[
\eta_k\equiv 1\ \text{on}\ Q_{k-\frac{1}{2}},\qquad 0\le \eta_k\le 1,\qquad |\nabla \eta_k|\le C,
\]
for a universal constant $C>0$. Finally, we define
\[
U_{\varepsilon,k}(x)=\eta_k(x)\,(1-\varphi_\varepsilon(x)),\qquad \text{for}\ 0<\varepsilon\le \varepsilon_k,
\]
which belongs to $W^{1,p}_0(E)$. Indeed, this is a smooth function with finite $W^{1,p}$ norm, vanishing at the boundary $\partial E$. Then it is sufficient to appeal to \cite[Theorem 9.17, Remark 1]{Brezis}. Thus, we get
\[
\begin{split}
\lambda_p(E)&\le \frac{\displaystyle\int_{Q_{k+1/2}} |\nabla U_{\varepsilon,k}|^p\,dx}{\displaystyle\int_{Q_{k+1/2}}|U_{\varepsilon,k}|^p\,dx}\\
&\le 2^{p-1}\,\frac{\displaystyle\int_{Q_{k+1/2}} \left|\nabla \varphi_\varepsilon\right|^p\,dx}{\displaystyle\int_{Q_{k-1/2}}\left|1-\varphi_\varepsilon\right|^p\,dx}+2^{p-1}\,\frac{\displaystyle\int_{Q_{k+1/2}\setminus Q_{k-1/2}} |\nabla\eta_k|^p\, \left|1-\varphi_\varepsilon\right|^p\,dx}{\displaystyle\int_{Q_{k-1/2}}\left|1-\varphi_\varepsilon\right|^p\,dx}.
\end{split}
\]
We can estimate the denominator thanks to \eqref{minorato}, while in the second term we use that $|1-\varphi_\varepsilon|\le 1$. By recalling the estimate on the $p-$Dirichlet integral of $\varphi_\varepsilon$, we obtain
\[
\begin{split}
\lambda_p(E)&\le \frac{4^p\,\varepsilon}{|Q_{k-1/2}|}+\frac{4^p\,C^p\,|Q_{k+1/2}\setminus Q_{k-1/2}|}{|Q_{k-1/2}|}.
\end{split}
\]
Observe that, by its definition, we have that $\varepsilon_k$ stays uniformly bounded from below, as $k$ goes to $\infty$. We can thus, for $k$ large enough, use the previous estimate with a fixed $\varepsilon>0$, independent of $k$. By taking the limit as $k$ goes to $\infty$ and observing that
\[
\lim_{k\to\infty} \frac{|Q_{k+1/2}\setminus Q_{k-1/2}|}{|Q_{k-1/2}|}=\lim_{k\to\infty} \frac{(2\,k+1)^N-(2\,k-1)^N}{(2\,k-1)^N}=0,
\]
we reach the desired contradiction $\lambda_p(E)=0$. This concludes the proof.
\end{proof}

\begin{comment}
\begin{rem}
For a fixed compact set $K$, the lower bound \eqref{louerbaund} spoils as the maximal dilation parameter becomes larger and larger. This behavior is natural, as it can be seen with the following simple two-dimensional example: take 
\[
K=\left[-\frac{1}{2},\frac{1}{2}\right]\times\left(\left[-\frac{1}{2},-\frac{1}{4}\right]\cup\left[\frac{1}{4},\frac{1}{2}\right]\right)\qquad \text{and}\qquad \mathbf{t}=(1,t),\ \text{for}\ t>1.
\]
Then we get
\[
D_{\mathbf{t}}(\mathbf{i}+K)=\left[-\frac{1}{2}+i_1,\frac{1}{2}+i_1\right]\times\left(\left[-\frac{t}{2}+t\,i_2,-\frac{t}{4}+t\,i_2\right]\cup\left[\frac{t}{4}+t\,i_2,\frac{t}{2}+t\,i_2\right]\right),\qquad \text{for}\ \mathbf{i}\in\mathbb{Z}^2,
\]
and thus accordingly
\[
\Omega_t=\mathbb{R}^2\setminus\left(\bigcup_{\mathbf{i}\in\mathbb{Z}^2}D_{\mathbf{t}}(\mathbf{i}+K)\right)=\bigcup_{i_2\in\mathbb{Z}} \left(\mathbb{R}\times \left(-\frac{t}{4}+t\,i_2,\frac{t}{4}+t\,i_2\right)\right).
\]
In other words, the set $\Omega_t$ is a countable union of disjoint horizontal strips, having width $t/2$. This gives that
\[
\lambda_p(\Omega_t)=\lambda_p\left(\mathbb{R}\times \left(-\frac{t}{4},\frac{t}{4}\right)\right)=\frac{1}{t^p}\,\lambda_p\left(\mathbb{R}\times \left(-\frac{1}{4},\frac{1}{4}\right)\right).
\]
We thus obtain that 
\[
\lambda_p(\Omega) \sim \frac{1}{t^p},\qquad \text{for}\ t\to +\infty.
\]
\end{rem}
\end{comment}
\begin{rem}
Without further assumptions on the generating set $K$, it is not possible to revert the estimate \eqref{louerbaund}. Indeed, take for example 
\[
K_\varepsilon=\overline{Q_{1/2}}\setminus B_\varepsilon,\qquad \text{for}\ 0<\varepsilon<\frac{1}{2}.
\]
Accordingly, the periodically perforated set generated by $K_\varepsilon$ is simply given by
\[
\Omega_\varepsilon=\bigcup_{\mathbf{i}\in\mathbb{Z}^N} B_\varepsilon(\mathbf{i}),
\]
i.e. this is a disjoint union of open balls, each having radius $\varepsilon$. Thanks to the properties of $\lambda_p$, we thus get
\[
\lambda_p(\Omega_\varepsilon)=\lambda_p(B_\varepsilon)=\varepsilon^{-p}\,\lambda_p(B_1),
\]
which blows-up, as $\varepsilon$ goes to $0$. On the other hand, we clearly have
\[
\mathrm{cap}_p(K_\varepsilon;Q_1)\le \mathrm{cap}_p(\overline{Q_{1/2}};Q_1),
\]
thus the relative $p-$capacity of $K_\varepsilon$ stays uniformly bounded from above.
\par
The reader may be unsatisfied with this example, %This example may seem a little artificial, 
as each open set $\Omega_\varepsilon$ has infinitely many connected components. In this sense, a more interesting counterexample is provided by the sets 
\[
\Omega_\varepsilon=\mathbb{R}^N\setminus\left(\bigcup_{\mathbf{i}\in\mathbb{Z}^N}\overline{Q_{1/2-\varepsilon}(\mathbf{i})}\right),
\]
considered in Remark \ref{controcroci}. Here as well, we have 
\[
\lim_{\varepsilon\searrow 0}\lambda_p(\Omega_\varepsilon)=+\infty\qquad \text{and}\qquad \sup_{0<\varepsilon<1/2} \mathrm{cap}_p(K_\varepsilon;Q_1)<+\infty,\ \text{where}\ K_\varepsilon:=\overline{Q_{1/2-\varepsilon}}.
\]
%The drawback is that, 
In this case, in order to show that $\lambda_{p}(\Omega_\varepsilon)$ blows up as $\varepsilon$ approaches $0$, we have to appeal to a more sophisticated estimate. For instance, we can use the results contained in \cite[Section 5]{BozBra2}. Indeed, with the terminology of that paper, it is easy to see that the family of sets $\Omega_\varepsilon$ has a {\it measure density index} uniformly bounded from below. Thus, by\footnote{More precisely, we can apply \cite[Corollary 5.6]{BozBra2} by choosing $p=q$, $r_0=\varepsilon$, $\theta=2^{-N}$ and $t_0=0$.} \cite[Corollary 5.6]{BozBra2} this implies that 
\[
c_{N,p}\,\left(\frac{1}{r_{\Omega_\varepsilon}}\right)^p\le\lambda_p(\Omega_\varepsilon).
\]
\end{rem}

\section{A Maz'ya-Lieb--type estimate}
\label{sec:3}
In this section we will give a different proof of the following fact, due to Lieb (see \cite[Corollary 4]{Li}): if an open set $\Omega$ has a ``small'' Poincar\'e constant, then it should contain a ``big'' portion of a ``large'' ball. 
This will be useful in order to get a compactness lemma {\it \`a la} Lieb, in the next section. Throughout this section, for completeness we will include the limit case $p=1$, as well.
\vskip.2cm\noindent
We need at first the concept of {\it negligible set} in the sense of Molchanov, see \cite[Chapters 14 \& 18]{Maz}. This permits to introduce a suitable capacitary variant of the inradius, as in \cite{BozBra2, BozBra,MaSh}.
\begin{definition}
Let $1\le p<\infty$ and $0<\gamma<1$, we say that a compact set $\Sigma\subseteq \overline{B_r(x_0)}$ 
is {\it $(p,\gamma)-$negligible} if 
\[
\mathrm{cap}_p(\Sigma;B_{2r}(x_0))\le \gamma\,\mathrm{cap}_p\left(\overline{B_r(x_0)};B_{2r}(x_0)\right).
\]
Accordingly, we consider the {\it capacitary inradius of $\Omega$}, defined as follows
\[
R_{p,\gamma}(\Omega):=\sup\Big\{r>0\, :\, \exists\, x_0\in\mathbb{R}^N\ \text{such that}\ \overline{B_r(x_0)}\setminus\Omega\ \text{is $(p,\gamma)-$negligible}\Big\}.
\]
From its definition, we can see that
\[
r_\Omega\le R_{p,\gamma}(\Omega),\ \text{for every}\ 0<\gamma<1,\qquad\text{and}\qquad \gamma \mapsto R_{p,\gamma}(\Omega) \ \text{is monotone non-decreasing}.
\]
\end{definition}

\begin{lm}
\label{lm:misura}
Let $1\le p\le N$ and $0<\gamma<1$. Let $\Omega\subseteq\mathbb{R}^N$ be an open set. If the set $\overline{B_r(x_0)}\setminus\Omega$ is $(p,\gamma)-$negligible, then
\[
|B_r(x_0)\cap \Omega|\ge \left(1-\Gamma_{N,p}\,\gamma\right)\,|B_r(x_0)|,
\]
where the constant $\Gamma_{N,p}\ge1$ is given by
\[
\Gamma_{N,p}=\frac{\mathrm{cap}_p\left(\overline{B_1};B_{2}\right)}{|B_1|\,\lambda_p(B_2)}.
\]
\end{lm}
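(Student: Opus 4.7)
Here is my proposed strategy.

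\textbf{Setup.} Set $\Sigma := \overline{B_r(x_0)} \setminus \Omega$, which is a compact subset of $\overline{B_r(x_0)}$. Since $\partial B_r(x_0)$ has Lebesgue measure zero, we have
\[
|B_r(x_0) \cap \Omega| = |B_r(x_0)| - |\Sigma \cap B_r(x_0)| = |B_r(x_0)| - |\Sigma|,
\]
so it suffices to prove the upper bound $|\Sigma| \le \Gamma_{N,p}\,\gamma\,|B_r(x_0)|$.

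\textbf{From capacity to measure via Poincar\'e.} The plan is to bound the measure of $\Sigma$ by its $p$-capacity in $B_{2r}(x_0)$, using the Poincar\'e inequality. Pick any admissible test function $u \in C^\infty_0(B_{2r}(x_0))$ with $u \ge 1$ on $\Sigma$. Then
\[
|\Sigma| = \int_\Sigma 1\,dx \le \int_{\Sigma} |u|^p\,dx \le \int_{B_{2r}(x_0)} |u|^p\,dx \le \frac{1}{\lambda_p(B_{2r}(x_0))}\int_{B_{2r}(x_0)} |\nabla u|^p\,dx,
\]
where the last step uses the definition of $\lambda_p(B_{2r}(x_0))$. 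Taking the infimum over admissible $u$ yields
\[
|\Sigma| \le \frac{\mathrm{cap}_p(\Sigma; B_{2r}(x_0))}{\lambda_p(B_{2r}(x_0))}.
\]

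\textbf{Applying negligibility and scaling.} By the assumption that $\Sigma$ is $(p,\gamma)$-negligible we obtain
\[
|\Sigma| \le \gamma\, \frac{\mathrm{cap}_p(\overline{B_r(x_0)}; B_{2r}(x_0))}{\lambda_p(B_{2r}(x_0))}.
\]
Now I use the standard scaling behavior: $p$-capacity scales as $r^{N-p}$ and the Poincar\'e constant as $r^{-p}$, both invariant under translation. Concretely,
\[
\mathrm{cap}_p(\overline{B_r(x_0)}; B_{2r}(x_0)) = r^{N-p}\,\mathrm{cap}_p(\overline{B_1}; B_2), \qquad \lambda_p(B_{2r}(x_0)) = r^{-p}\,\lambda_p(B_2),
\]
so the ratio equals $r^N\,\mathrm{cap}_p(\overline{B_1}; B_2)/\lambda_p(B_2) = |B_r(x_0)|\,\Gamma_{N,p}$. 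This gives the claimed bound $|\Sigma| \le \gamma\,\Gamma_{N,p}\,|B_r(x_0)|$.

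\textbf{Checking $\Gamma_{N,p} \ge 1$.} For completeness, one notes that any admissible $u \in C^\infty_0(B_2)$ with $u \ge 1$ on $\overline{B_1}$ satisfies $\int_{B_2} |u|^p\,dx \ge |B_1|$, whence $|B_1|\,\lambda_p(B_2) \le \int_{B_2} |\nabla u|^p\,dx$; taking the infimum gives $|B_1|\,\lambda_p(B_2) \le \mathrm{cap}_p(\overline{B_1}; B_2)$.

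The argument has no real obstacle: the only point requiring care is that it genuinely uses $p \le N$ (so that the Poincar\'e inequality on $B_{2r}(x_0)$ is not vacuous and one actually has to work) and the hypothesis of $(p,\gamma)$-negligibility is naturally paired with the ``capacity vs.\ Poincar\'e'' comparison. For $p = 1$, the same proof runs provided one interprets $\mathrm{cap}_1$ and $\lambda_1$ in the standard (BV) sense, since the test function argument above only uses smooth functions.
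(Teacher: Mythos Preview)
Your proof is correct and follows essentially the same route as the paper's: the paper cites Maz'ya's inequality $\mathrm{cap}_p(\Sigma;B_{2r}(x_0))\ge \lambda_p(B_{2r}(x_0))\,|\Sigma|$ (their \cite[Corollary 2.3.4]{Maz}), which you derive directly from the test-function argument, and then both proofs apply negligibility and scaling identically. One small correction to your closing remarks: the argument does \emph{not} actually use $p\le N$ anywhere (the Poincar\'e inequality on the bounded ball $B_{2r}(x_0)$ holds for every $p\ge 1$); the restriction in the statement is only because the paper treats $p>N$ by a different, simpler inradius argument in the subsequent theorem, and no BV interpretation is needed for $p=1$ since everything is done with smooth test functions.
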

\begin{proof}
We estimate the relative $p-$capacity of $\overline{B_r(x_0)}\setminus\Omega$ by \cite[Corollary 2.3.4]{Maz}. This gives
\[
\begin{split}
\mathrm{cap}_p\left(\overline{B_r(x_0)}\setminus\Omega;B_{2r}(x_0)\right)&\ge \lambda_p(B_{2r}(x_0))\,\left|\overline{B_r(x_0)}\setminus\Omega\right|.
\end{split}
\]
On the other hand, by assumption of $(p,\gamma)-$negligibility we have
\[
\begin{split}
\mathrm{cap}_p\left(\overline{B_r(x_0)}\setminus\Omega;B_{2r}(x_0)\right)&\le \gamma\,\mathrm{cap}_p\left(\overline{B_r(x_0)};B_{2r}(x_0)\right)\\
&=\gamma\,r^{N-p}\,\mathrm{cap}_p\left(\overline{B_1};B_{2}\right).
\end{split}
\]
By joining the last two equations in display and simplifying the common terms, we thus obtain
\[
\lambda_p(B_2)\,\left|\overline{B_r(x_0)}\setminus\Omega\right|\le \gamma\,r^{N}\,\mathrm{cap}_p\left(\overline{B_1};B_{2}\right).
\]
We used the scaling properties of $\lambda_p$ to ensure that $\lambda_p(B_{2r}(x_0))=r^{-p}\,\lambda_p(B_2)$.
With simple manipulations, we see that this is equivalent to
\[
\left|\overline{B_r(x_0)}\setminus\Omega\right|\le \frac{1}{\lambda_p(B_2)}\,\frac{\mathrm{cap}_p\left(\overline{B_1};B_{2}\right)}{|B_1|}\,\gamma\,|B_{r}(x_0)|=:\Gamma_{N,p}\,\gamma\,|B_{r}(x_0)|.
\]
We observe that 
\[
|B_1|\,\lambda_p(B_2)\le \mathrm{cap}_p(\overline{B_1};B_2),
\]
again by \cite[Corollary 2.3.4]{Maz}, thus in particular we have that $\Gamma_{N,p}\ge 1$.
This is enough to conclude: indeed, we can now infer
\[
\begin{split}
\left|\overline{B_r(x_0)}\cap \Omega\right|=|B_r(x_0)|-\left|\overline{B_r(x_0)}\setminus\Omega\right|&\ge |B_r(x_0)|-\Gamma_{N,p}\,\gamma\,|B_{r}(x_0)|\\
&=\left(1-\Gamma_{N,p}\,\gamma\right)\,|B_r(x_0)|,
\end{split}
\]
as desired.
\end{proof}
The next one is the main result of this section. This provides a different proof of \cite[Corollary 4]{Li} by Lieb: in the proof, we will use an idea of Maz'ya and Shubin (see \cite[Section 5]{MaSh}), combined with the recent result \cite[Main Theorem]{BozBra} by Bozzola and the first author.
\begin{teo}
\label{teo:liebtype}
Let $\Omega\subseteq\mathbb{R}^N$ be an open set such that $\lambda_p(\Omega)>0$. 
\begin{itemize}
\item Let $1\le p\le N$. Then for every $0<\beta<1$ there exists a constant $\alpha_{N,p,\beta}>0$ and a ball $B_r(x_0)$ with radius 
\[
r\ge \frac{\alpha_{N,p,\beta}}{\Big(\lambda_p(\Omega)\Big)^\frac{1}{p}},
\] 
such that 
\[
|B_r(x_0)\cap \Omega|\ge \beta\,|B_r(x_0)|.
\] 
The constant $\alpha_{N,p,\beta}$ is such that:
\[
\alpha_{N,p,\beta}\sim (1-\beta)^\frac{1}{p},\qquad \text{as}\ \beta\nearrow 1.
\]
\item Let $p>N$. Then there exists a constant $\alpha_{N,p}>0$ and a ball $B_r(x_0)$ with radius 
\[
r\ge \frac{\alpha_{N,p}}{\Big(\lambda_p(\Omega)\Big)^\frac{1}{p}},
\] 
such that $B_r(x_0)\subseteq \Omega$.
\end{itemize}
\end{teo}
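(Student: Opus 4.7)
The plan is to combine the Maz'ya notion of capacitary inradius $R_{p,\gamma}(\Omega)$ with the quantitative inequality from \cite[Main Theorem]{BozBra}, which provides an explicit constant $c_{N,p}(\gamma)>0$, with dependence $c_{N,p}(\gamma)\sim \gamma$ as $\gamma\to 0$, such that
\[
\lambda_p(\Omega)\le \frac{c_{N,p}(\gamma)}{R_{p,\gamma}(\Omega)^p}.
\]
Solving for $R_{p,\gamma}(\Omega)$, this immediately delivers the ball-producing mechanism we need: by the very definition of $R_{p,\gamma}(\Omega)$, for every $\delta>0$ one can locate a point $x_0\in\mathbb{R}^N$ and a radius $r>R_{p,\gamma}(\Omega)-\delta$ such that $\overline{B_r(x_0)}\setminus\Omega$ is $(p,\gamma)-$negligible. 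The bridge from capacitary negligibility to a measure-theoretic lower bound on $|B_r(x_0)\cap \Omega|$ is then provided by Lemma \ref{lm:misura}.

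For the case $1\le p\le N$, fix $\beta\in(0,1)$ and choose $\gamma:=(1-\beta)/\Gamma_{N,p}$: by Lemma \ref{lm:misura} this ensures $|B_r(x_0)\cap \Omega|\ge \beta\,|B_r(x_0)|$, while the radius bound $r\ge \alpha_{N,p,\beta}/\lambda_p(\Omega)^{1/p}$ is inherited from the estimate of \cite[Main Theorem]{BozBra}, with $\alpha_{N,p,\beta}$ proportional to $c_{N,p}(\gamma)^{1/p}\sim \gamma^{1/p}\sim (1-\beta)^{1/p}$ as $\beta\nearrow 1$. This yields exactly the advertised asymptotics.

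The case $p>N$ sits outside this capacitary framework, because every nonempty compact set has positive $p-$capacity, and is more directly handled by Morrey's embedding $W^{1,p}_0(\Omega)\hookrightarrow L^\infty$. Combined with H\"older's inequality, any test function supported in a ball $B_r(x_0)\subseteq \Omega$ satisfies $\|\varphi\|_{L^\infty}\le C_{N,p}\,r^{1-N/p}\,\|\nabla \varphi\|_{L^p}$, from which one deduces the classical estimate $\lambda_p(\Omega)\ge c_{N,p}/r_\Omega^p$. Picking a ball of radius slightly below $r_\Omega$ (contained in $\Omega$ by definition of the inradius) yields the desired ball, of radius at least $c_{N,p}^{1/p}/\lambda_p(\Omega)^{1/p}$, up to a harmless $\delta$.

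The main technical obstacle is verifying that the constant $c_{N,p}(\gamma)$ provided by \cite[Main Theorem]{BozBra} indeed scales linearly in $\gamma$ for $\gamma$ close to $0$, so that the sharp $(1-\beta)^{1/p}$ dependence of $\alpha_{N,p,\beta}$ comes out correctly. Everything else, namely the $\delta-$approximation of the supremum defining $R_{p,\gamma}(\Omega)$ and the plug-in into Lemma \ref{lm:misura}, is routine.
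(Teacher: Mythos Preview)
Your strategy is exactly the paper's: for $1\le p\le N$, invoke \cite[Main Theorem]{BozBra} to bound the capacitary inradius $R_{p,\gamma}(\Omega)$ from below, choose $\gamma=(1-\beta)/\Gamma_{N,p}$, and feed the resulting negligible ball into Lemma~\ref{lm:misura}; for $p>N$, use the classical estimate $\lambda_p(\Omega)\ge C_{N,p}/r_\Omega^p$ and pick a ball of radius just below $r_\Omega$.

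However, the key inequality is written in the wrong direction. As stated, $\lambda_p(\Omega)\le c_{N,p}(\gamma)/R_{p,\gamma}(\Omega)^p$ yields only an \emph{upper} bound $R_{p,\gamma}(\Omega)\le (c_{N,p}(\gamma)/\lambda_p(\Omega))^{1/p}$, which gives no large ball at all. The correct statement of \cite[Main Theorem]{BozBra}, as quoted in the paper, is
\[
\gamma\,\sigma_{N,p}\,\frac{1}{R_{p,\gamma}(\Omega)^p}\le \lambda_p(\Omega),
\]
with $\sigma_{N,p}$ independent of $\gamma$; this is what gives the lower bound $R_{p,\gamma}(\Omega)\ge (\gamma\,\sigma_{N,p}/\lambda_p(\Omega))^{1/p}$ and hence $\alpha_{N,p,\beta}\sim\gamma^{1/p}\sim(1-\beta)^{1/p}$. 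Your subsequent reasoning tacitly requires this direction, so the error is likely a slip, but it should be fixed. A similar reversal occurs in your $p>N$ sketch: testing on functions supported in a ball $B_r(x_0)\subseteq\Omega$ gives $\lambda_p(\Omega)\le\lambda_p(B_r)$ by monotonicity, not the lower bound $\lambda_p(\Omega)\ge c_{N,p}/r_\Omega^p$ you need. The latter is a genuine (and standard) result, cited in the paper from \cite{Po1,Vit,BozBra0,BraPriZag}, but it is not derived by the argument you outline.
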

\begin{proof}
We distinguish two cases, either $1\le p\le N$ or $p>N$.
\vskip.2cm\noindent
{\it Case $1\le p\le N$}. By \cite[Main Theorem]{BozBra}, we know that there exists a  constant $\sigma_{N,p}>0$ such that,  for every $0<\gamma<1$, it holds 
\[
\gamma\,\sigma_{N,p}\,\left(\frac{1}{R_{p,\gamma}(\Omega)}\right)^p\le \lambda_p(\Omega).
\]
In particular, by choosing 
\[
\gamma=\gamma_\beta:=\frac{1-\beta}{\Gamma_{N,p}},
\] 
where $\Gamma_{N,p}$ is the same constant as in Lemma \ref{lm:misura},
we get
\[
R_{p,\gamma_\beta}(\Omega)\ge \left(\frac{\sigma_{N,p}\,\gamma_\beta}{\lambda_p(\Omega)}\right)^\frac{1}{p}=:\frac{2\,\alpha_{N,p,\beta}}{\Big(\lambda_p(\Omega)\Big)^\frac{1}{p}}.
\]
By definition of capacitary inradius, there exists a ball $B_{r}(x_0)$ with
\[
\frac{R_{p,\gamma_\beta}(\Omega)}{2}<r<R_{p,\gamma_\beta}(\Omega),
\]
such that the set $\overline{B_r(x_0)}\setminus \Omega$ is $(p,\gamma_\beta)-$negligible. By using Lemma \ref{lm:misura} and the choice of $\gamma$, we conclude.
\vskip.2cm\noindent
{\it Case $p>N$.} This case is simpler. Indeed, we know that for $p>N$ there holds
\[
\frac{C_{N,p}}{r_\Omega^p}\le \lambda_p(\Omega),
\]
see, for example, \cite[Theorem 1.4.1]{Po1}, \cite[Theorem 1.1]{Vit} and, more recently, \cite[Theorem 1.3]{BozBra0} and \cite[Corollary 5.9]{BraPriZag}. Moreover, by definition of inradius, there exists a ball $B_r(x_0)$ with
\[
\frac{r_\Omega}{2}<r<r_\Omega,
\]
such that $B_r(x_0)\subseteq\Omega$.
By choosing this time
\[
\alpha_{N,p}=\frac{1}{2}\,\left(C_{N,p}\right)^\frac{1}{p},
\]
we conclude.
\end{proof}

\section{A compactness result}
\label{sec:4}

We will use Theorem \ref{teo:liebtype} to deduce a weak compactness result for certain sequences of functions in $W^{1,p}_0(\Omega)$, when $\Omega$ is a periodically perforated set. The main point is obtaining that the limit function {\it is not trivial}. We adapt the idea of \cite[Lemma 6]{Li}: the latter is concerned with functions in
$W^{1,p}(\mathbb{R}^N)$, now we have to take care of the fact that we work in $W^{1,p}_0(\Omega)$. 
\begin{lm}\label{lm:lieb}
Let $1<p<\infty$ and let $\Omega\subseteq\mathbb{R}^N$ be a $\mathbf{t}-$periodically perforated open set, generated by $K$. Let us suppose that
\[
\mathrm{cap}_p(K; Q_1)>0.
\]
Let  $\{u_n\}_{n\in\mathbb{N}}\subseteq W^{1,p}_0(\Omega)$ be a sequence of non-negative functions such that
\[
\|\nabla u_n\|^p_{L^p(\Omega)}\le C \qquad \text{and}\qquad
|\left\{x\in\Omega\, :\, u_n(x)>\varepsilon\right\}|>\delta,\quad \text{ for all }n\in\mathbb{N},
\]
for some $C,\varepsilon,\delta>0$ independent of $n$.
\par
Then, there exists a sequence of vectors $\{\mathbf{i}_n\}_{n\in\mathbb{N}}\subseteq \mathbb{Z}^N$ such that,
up to a subsequence, the translated sequence $\{u_n(D_{\mathbf{t}}(\mathbf{i}_n)+\cdot)\}_{n\in\mathbb{N}}$ converges weakly in $W^{1,p}(\Omega)$ to a function $u\in W^{1,p}_0(\Omega)\setminus\{0\}$.
\end{lm}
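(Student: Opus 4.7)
The plan is to adapt Lieb's translation-compactness argument to the periodic structure of $\Omega$: locate, for each $n$, a lattice vector $\mathbf{i}_n \in \mathbb{Z}^N$ around which $u_n$ carries a uniform lower bound of $L^p$-mass; then translate by $D_\mathbf{t}(\mathbf{i}_n)$ (which preserves $\Omega$, hence preserves membership in $W^{1,p}_0(\Omega)$) and use local Rellich compactness to ensure the weak limit is not trivial. The capacitary estimate of Theorem \ref{teo:liebtype} is the key device to find the localizing ball.

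First, I would introduce the truncated function $g_n := (u_n - \varepsilon/2)_+$ together with the open super-level set $V_n := \{u_n > \varepsilon/2\}$ (read through the quasi-continuous representative of $u_n$). Standard characterizations of $W^{1,p}_0$-membership via quasi-continuity yield $g_n \in W^{1,p}_0(V_n)$. Since $g_n \ge \varepsilon/2$ on $\{u_n > \varepsilon\}$, one has $\|g_n\|_{L^p}^p \ge (\varepsilon/2)^p\, \delta$, while clearly $\|\nabla g_n\|_{L^p}^p \le C$. Used as a test function in the variational definition of the Poincaré constant, $g_n$ provides the bound
\[
\lambda_p(V_n) \le \frac{\|\nabla g_n\|_{L^p}^p}{\|g_n\|_{L^p}^p} \le \frac{C}{(\varepsilon/2)^p\, \delta} =: M.
\]

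Next, I would apply Theorem \ref{teo:liebtype} to $V_n$, with some fixed $\beta \in (0,1)$ (or with $\beta = 1$ when $p > N$): this produces a ball $B_{r_n}(x_n)$ of radius $r_n \ge \alpha_{N,p,\beta}/M^{1/p}$ such that $|B_{r_n}(x_n) \cap V_n| \ge \beta\, |B_{r_n}(x_n)|$. Lemma \ref{lm:nonbanale} ensures $\lambda_p(\Omega) > 0$, so the Poincaré and Chebyshev inequalities bound $|V_n|$ uniformly, which forces $r_n$ to stay in a bounded interval $[\alpha_{N,p,\beta}/M^{1/p}, r_{\max}]$. Consequently,
\[
\int_{B_{r_n}(x_n)} u_n^p\, dx \ge (\varepsilon/2)^p\, |B_{r_n}(x_n) \cap V_n| \ge 2\eta > 0,
\]
for some constant $\eta$ independent of $n$. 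Now choose $\mathbf{i}_n \in \mathbb{Z}^N$ with $x_n \in D_\mathbf{t}(\mathbf{i}_n + Q_{1/2})$; setting $R := r_{\max} + \tfrac{1}{2}\sqrt{t_1^2 + \dots + t_N^2}$, the triangle inequality gives the inclusion $B_{r_n}(x_n) \subseteq B_R(D_\mathbf{t}(\mathbf{i}_n))$.

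By $\mathbf{t}$-periodicity of $\Omega$, the translates $v_n(x) := u_n(x + D_\mathbf{t}(\mathbf{i}_n))$ still lie in $W^{1,p}_0(\Omega)$ with the same Sobolev norm. Reflexivity of $W^{1,p}_0(\Omega)$ (which holds since $1 < p < \infty$) yields a subsequence $v_n \rightharpoonup u$ in $W^{1,p}(\Omega)$. A change of variable gives $\int_{B_R} v_n^p\, dx \ge 2\eta$, and the compact embedding $W^{1,p}(B_R) \hookrightarrow L^p(B_R)$ upgrades this to $v_n \to u$ strongly in $L^p(B_R)$, so that $\int_{B_R} u^p\, dx \ge 2\eta > 0$ and hence $u \not\equiv 0$. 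The most delicate point on which the whole capacitary argument rests is the membership $g_n \in W^{1,p}_0(V_n)$, which requires identifying $V_n$ with the finely open support of the quasi-continuous representative of $g_n$; all other steps are either direct computations or standard applications of the results already recalled in the paper.
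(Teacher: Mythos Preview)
Your overall strategy matches the paper's: use the Maz'ya--Lieb estimate (Theorem~\ref{teo:liebtype}) to localize mass around a lattice point, translate by the corresponding period vector, and extract a nontrivial weak limit via local Rellich compactness. The difference is that you attempt to work directly with the Sobolev functions $u_n$, while the paper first treats the case $u_n \in C^\infty_0(\Omega)$ and then passes to general $W^{1,p}_0(\Omega)$ by an approximation argument.

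The gap in your proposal is exactly where this shortcut bites. Theorem~\ref{teo:liebtype} is stated and proved for \emph{open} sets, and for $1<p\le N$ the super-level set $V_n = \{u_n > \varepsilon/2\}$ of a Sobolev function is not open in general: at best it is quasi-open (or finely open), depending on which representative of $u_n$ you fix. You flag the membership $g_n \in W^{1,p}_0(V_n)$ as ``the most delicate point'', but the more serious issue is upstream: the quantities $\lambda_p(V_n)$ and $R_{p,\gamma}(V_n)$, and the construction of the ball $B_{r_n}(x_n)$ in Theorem~\ref{teo:liebtype}, all presuppose that the ambient set is genuinely open; the capacitary-inradius machinery of \cite{BozBra} invoked there has not been established for finely open sets in this paper, so your sentence ``all other steps are \dots\ standard applications of the results already recalled in the paper'' is not justified. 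The paper sidesteps the problem cleanly: for smooth $u_n$ the set $A_n = \{u_n > \varepsilon/4\}$ is open and bounded (since $u_n$ has compact support), so Theorem~\ref{teo:liebtype} applies directly, and $g_n = (u_n - \varepsilon/2)_+$ is Lipschitz with compact support contained in $A_n$, hence trivially in $W^{1,p}_0(A_n)$. The general Sobolev case is then recovered in a separate approximation step, showing that the translation vectors found for a smooth approximating sequence also work for the original $u_n$. Your argument could be repaired by inserting the same two-stage structure.
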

\begin{proof}
We divide the proof in two parts: we first prove the result under the further restriction that each $u_n$ belongs to $C^\infty_0(\Omega)$. Then, in the second part we show how to remove this assumption.
\vskip.2cm\noindent
{\it Part 1: regular functions.} We define at first the following set
\[
A_n=\left\{x\in\Omega\, :\, u_n(x)>\varepsilon/4\right\},\qquad \text{for every}\ n\in\mathbb{N}.
\]
Observe that this is an open set, since we are assuming that each $u_n$ is smooth. We also have $\lambda_p(A_n)>0$, by using that each $u_n$ compactly supported in $\Omega$ and thus $A_n$ is bounded. 
According to Theorem \ref{teo:liebtype} (applied with $\beta=1/2$, in the case $p\le N$), for every $n\in\mathbb{N}$ there exists a radius 
\begin{equation}\label{boundbelow}
r_n\ge \frac{\alpha_{N,p}}{\Big(\lambda_p(A_n)\Big)^\frac{1}{p}},
\end{equation}
and a point $x_n\in\mathbb{R}^N$ such that 
\begin{equation}
\label{mezzino}
|B_{r_n}(x_n)\cap A_n|\ge \frac{1}{2}\,|B_{r_n}(x_n)|.
\end{equation}
We claim that there exist $0<c_1<c_2$ such that 
\begin{equation}
\label{radius_uniform}
c_1\le r_n\le c_2,\qquad \text{for every}\ n\in\mathbb{N}.
\end{equation}
We start with the upper bound: by the Markov-Chebyshev inequality, we have
\[
\begin{split}
|A_n|\le \left(\frac{4}{\varepsilon}\right)^p\,\int_\Omega |u_n|^p\,dx&\le \left(\frac{4}{\varepsilon}\right)^p\,\frac{1}{\lambda_p(\Omega)}\,\int_\Omega |\nabla u_n|^p\,dx\le \left(\frac{4}{\varepsilon}\right)^p\,\frac{C}{\lambda_p(\Omega)}.
\end{split}
\]
Observe that $\lambda_p(\Omega)>0$, thanks to Lemma \ref{lm:nonbanale} and the capacitary assumption on $K$.
Thus, from \eqref{mezzino}, we get in particular
\[
\frac{\omega_N}{2}\,r_n^N\le |A_n|\le \left(\frac{4}{\varepsilon}\right)^p\,\frac{C}{\lambda_p(\Omega)},\qquad \text{for every}\ n\in\mathbb{N}.
\]
In order to bound $r_n$ uniformly from below, in view of \eqref{boundbelow}, it is sufficient to prove that $\lambda_p(A_n)$ is uniformly bounded from above, in terms of the data $C,\varepsilon,\delta$ and $p$, only.  To this aim, we introduce the set
\[
E_n=\left\{x\in\Omega\, :\, u_n(x)>\varepsilon\right\}\subseteq A_n,\qquad \text{for every}\ n\in\mathbb{N}.
\]
We also define $g_n=(u_n-\varepsilon/2)_+$. We notice that $g_n\in W^{1,p}_0(A_n)$: indeed, by construction, $g_n$ is a Lipschitz function, with compact support contained in $A_n$. By using $g_n$ as a trial function, we get
\[
\lambda_p(A_n)\le \frac{\displaystyle\int_{A_n} |\nabla g_n|^p\,dx}{\displaystyle\int_{A_n} |g_n|^p\,dx}\le \frac{\displaystyle\int_{A_n} |\nabla g_n|^p\,dx}{\displaystyle\int_{E_n} |g_n|^p\,dx}\le \left(\frac{2}{\varepsilon}\right)^p\,\frac{C}{\delta}.
\] 
This implies that 
\[
r_n\ge \frac{\alpha_{N,p}}{\Big(\lambda_p(A_n)\Big)^\frac{1}{p}}\ge \frac{\varepsilon}{2}\,\left(\frac{\delta}{C}\right)^\frac{1}{p}\,\alpha_{N,p},\qquad \text{for every}\ n\in\mathbb{N}.
\]
We thus have proven \eqref{radius_uniform}.
\par
We now observe that the family of hyper-rectangles $\{D_{\mathbf{t}}(\mathbf{i}+\overline{Q_{1/2}})\}_{\mathbf{i}\in\mathbb{Z}^N}$ tile the whole space. Accordingly, for every $n\in\mathbb{N}$, there exists at least a point $\mathbf{i_n}\in\mathbb{Z}^N$ such that
\[
x_n\in D_{\mathbf{t}}(\mathbf{i}_n+\overline{Q_{1/2}}).
\]
Observe that by construction we have 
\[
B_{r_n}(x_n)\subseteq B_{R_n}(D_{\mathbf{t}}(\mathbf{i}_n)),\qquad \text{where}\ R_n=r_n+\frac{\sqrt{N}}{2}\,\max\{t_1,\dots,t_N\}.
\]
From \eqref{mezzino} and \eqref{radius_uniform}, we get the uniform lower bound
\[
|B_{R_n}(D_{\mathbf{t}}(\mathbf{i}_n))\cap A_n|\ge \frac{\omega_N\,c_1^N}{2},\qquad \text{for every}\ n\in\mathbb{N}.
\]
 Thanks to the uniform bounds \eqref{radius_uniform} on $\{r_n\}_{n\in\mathbb{N}}$, we can extract a subsequence (not relabelled) such that 
\[
\lim_{n\to\infty} r_n=\overline{r}>0.
\]
Thus, if we set $\overline{R}=\overline{r}+\max\{t_1,\dots,t_N\}\,\sqrt{N}/2$, we easily get
\begin{equation}
\label{forticara}
\lim_{n\to\infty} \|1_{B_{R_n}}-1_{B_{\overline R}}\|_{L^{p'}(\mathbb{R}^N)}=0.
\end{equation}
We now observe that the translated sequence $\{u_n(D_{\mathbf{t}}(\mathbf{i}_n)+\cdot)\}_{n\in\mathbb{N}}$ still belongs to $W^{1,p}_0(\Omega)$, thanks to the assumption on $\Omega$. Moreover, by recalling that $\lambda_{p}(\Omega)>0$, we have that the sequence  $\{u_n(D_{\mathbf{t}}(\mathbf{i}_n)+\cdot)\}_{n\in\mathbb{N}}$ is bounded in  $W^{1,p}(\Omega)$.
Hence, it weakly converges in $W^{1,p}(\Omega)$ to a function $u\in W^{1,p}_0(\Omega)$, up to a subsequence.
In particular, by testing the weak convergence in $L^p$ against the characteristic function of $B_{\overline{R}}$ and using \eqref{forticara}, we thus have
\[
\begin{split}
\int_{B_{\overline{R}}} u(x)\,dx=\lim_{n\to\infty} \int_{B_{\overline{R}}} u_n(D_{\mathbf{t}}(\mathbf{i}_n)+x)\,dx&=\lim_{n\to\infty}\int_{B_{R_n}} u_n(D_{\mathbf{t}}(\mathbf{i}_n)+x)\,dx\\
&=\lim_{n\to\infty} \int_{B_{R_n}(D_\mathbf{t}(\mathbf{i}_n))} u_n(y)\,dy\\
&\ge \liminf_{n\to\infty} \int_{B_{R_n}(D_{\mathbf{t}}(\mathbf{i}_n))\cap A_n} u_n(y)\,dy\\
&\ge \frac{\varepsilon}{4}\,\liminf_{n\to\infty}|B_{R_n}(D_{\mathbf{t}}(\mathbf{i}_n))\cap A_n|.
\end{split}
\]
In the last inequality we used that $u_n> \varepsilon/4$ on $A_n$, by construction.
In light of the previous uniform lower bound on the measure of $B_{R_n}(D_{\mathbf{t}}(\mathbf{i}_n))\cap A_n$, this is enough to infer that $u\not\equiv 0$.
\vskip.2cm\noindent
{\it Part 2: general case.} We now suppose that $\{u_n\}_{n\in\mathbb{N}}\subseteq W^{1,p}_0(\Omega)$. By definition, for every $n\in\mathbb{N}$ there exists a sequence of functions $\{u_{k,n}\}_{k\in\mathbb{N}}\subseteq C^\infty_0(\Omega)$ such that
\[
\lim_{k\to\infty} \|u_{k,n}-u_n\|_{L^p(\Omega)}=\lim_{k\to\infty}\|\nabla u_{k,n}-\nabla u_n\|_{L^p(\Omega)}=0.
\]
Since each $u_n$ is non-negative, we can construct $\{u_{k,n}\}_{n\in\mathbb{N}}$ with the same property.
Moreover, being $\{u_n\}_{n\in\mathbb{N}}$ bounded in $W^{1,p}_0(\Omega)$, we can further assume that
\[
\|u_{k,n}\|^p_{L^p(\Omega)}+\|\nabla u_{k,n}\|^p_{L^p(\Omega)}\le C,\qquad \text{for every}\ n,k\in\mathbb{N}.
\]
Thus, each sequence $\{u_{k,n}\}_{k\in\mathbb{N}}$ is bounded in $W^{1,p}(\Omega)$, uniformly in $n$. Thanks to the convergence in $L^p(\Omega)$, for every $n\in\mathbb{N}$ we choose $k_n\in\mathbb{N}$ such that
\[
\|u_{k,n}-u_n\|_{L^p(\Omega)}\le \frac{1}{n+1},\qquad \text{for every}\ k\ge k_n.
\]
Without loss of generality, we can choose $\{k_n\}_{n\in\mathbb{N}}$ to be increasing.
\par
We take $\varepsilon,\delta>0$ as in the statement. By the Markov-Chebyshev inequality, for every $n\in\mathbb{N}$ and $k\ge k_n$ we have
\[
\left|\Big\{x\in\Omega\, :\, |u_{k,n}(x)-u_n(x)|>\varepsilon/2\Big\}\right|\le \left(\frac{2}{\varepsilon}\right)^p\,\|u_{k,n}-u_n\|^p_{L^p(\Omega)}\le \left(\frac{2}{\varepsilon}\right)^p\,\frac{1}{(n+1)^p}.
\]
We choose $n_0=n_0(p,\varepsilon,\delta)>0$ such that
\[
\left(\frac{2}{\varepsilon}\right)^p\,\frac{1}{(n_0+1)^p}<\frac{\delta}{2}.
\]
Then, for every $n\ge n_0$ we have
\[
\left|\Big\{x\in\Omega\, :\, |u_{k,n}(x)-u_n(x)|>\varepsilon/2\Big\}\right|<\frac{\delta}{2},\qquad \text{for every}\ k\ge k_n,
\]
Observe that 
\[
\Big\{x\in\Omega\, :\, u_n(x)>\varepsilon\Big\}\subseteq \Big\{x\in\Omega\, :\, |u_{k,n}(x)-u_n(x)|>\varepsilon/2\Big\}\cup \Big\{x\in\Omega\, :\, u_{k,n}(x)>\varepsilon/2\Big\}.
\]
Accordingly, we get that for every $n\ge n_0$ the function $U_n:=u_{k_n,n}$ is such that
\[
\left|\Big\{x\in\Omega\, :\, U_n(x)>\varepsilon/2\Big\}\right|\ge \left|\Big\{x\in\Omega\, :\, u_n(x)>\varepsilon\Big\}\right|-\left|\Big\{x\in\Omega\, :\, |U_n(x)-u_n(x)|>\varepsilon/2\Big\}\right|\ge \frac{\delta}{2}.
\]
We can thus apply the first part of the proof to the sequence $\{U_n\}_{n\in \mathbb{N}}$ and we can infer that
there exists a sequence of vectors $\{\mathbf{i}_n\}_{n\in\mathbb{N}}\subseteq \mathbb{Z}^N$ and  a function $u\in W^{1,p}_0(\Omega)\setminus\{0\}$ such that 
\[
\lim_{n\to\infty} \int_\Omega (u-U_n(D_{\mathbf{t}}(\mathbf{i}_n)+\cdot))\,\varphi\,dx=0 ,\qquad \text{for every}\ \varphi\in L^{p'}(\Omega),
\]
up to a subsequence. In particular, thanks to the periodicity of $\Omega$, we also get
\[
\begin{split}
\lim_{n\to\infty} \left|\int_\Omega (u-u_n(D_{\mathbf{t}}(\mathbf{i}_n)+\cdot))\,\varphi\,dx\right|&\le \lim_{n\to\infty} \left|\int_\Omega (u-U_n(D_{\mathbf{t}}(\mathbf{i}_n)+\cdot)))\,\varphi\,dx\right|\\
&+\lim_{n\to\infty} \|U_n-u_n\|_{L^p(\Omega)}\,\|\varphi\|_{L^{p'}(\Omega)}=0.
\end{split}
\]
Taking into account that the sequence $\{u_n (D_{\mathbf{t}}(\mathbf{i}_n)+\cdot) \}_{n\in\mathbb{N}}\subseteq W^{1,p}_0(\Omega)$  is bounded, the last inequality implies that  $\{u_n (D_{\mathbf{t}}(\mathbf{i}_n)+\cdot)\}_{n\in\mathbb{N}}$
converges weakly in $W^{1,p}_0(\Omega)$ to the  function $u\not\equiv0$, as desired.
\end{proof}

\section{A well-prepared minimizing sequence}
\label{sec:5}

As in \cite{BraBriPri}, we will construct a suitable minimizing sequence for our original problem by adding a ``vanishing confinement'' term. In other words, we will use the following simple result, which is the same as \cite[Lemma 4.1]{BraBriPri}. We use the following notation
\[
W^{1,p}_0(\Omega;|x|):=\left\{u\in W^{1,p}_0(\Omega)\, :\, \int_\Omega |x|\,|u|^p\,dx<+\infty\right\}.
\] 
\begin{lm}
\label{lm:vanconf}
Let $1<p<\infty$ and $q\ge p$ satisfying \eqref{exponents}. Let $\Omega\subseteq\mathbb{R}^N$ be an open set such that $\lambda_p(\Omega)>0$.
For every $n\in\mathbb{N}$, we define 
\[
\lambda_{p,q}(\Omega;V_n):=\inf_{u\in W^{1,p}_0(\Omega;|x|)}\Big\{\mathcal{G}_{p,n}(u)\,:\, \|u\|_{L^q(\Omega)}=1\Big\},
\]
where 
\[
\mathcal{G}_{p,n}(u)=\int_{\Omega}|\nabla u|^{p}\,dx +\int_{\Omega}V_n\,|u|^{p}\,dx\qquad \text{and}\qquad V_n(x)=\frac{|x|}{n+1}.
\]
Then, we have
\begin{equation}
\label{convauto}
\lim_{n\to\infty}\lambda_{p,q}(\Omega;V_n)=\lambda_{p,q}(\Omega).
\end{equation}
Moreover, the value $\lambda_{p,q}(\Omega;V_n)$ is attained by some non-negative function $u_n\in W^{1,p}_0(\Omega;|x|)$ with $\|u_n\|_{L^q(\Omega)}=1$ and we have
\begin{equation}
\label{minsequence}
\lim_{n\to\infty}\int_{\Omega}|\nabla u_n|^p\, dx=\lambda_{p,q}(\Omega).
\end{equation}
\end{lm}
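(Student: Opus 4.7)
The plan is to handle the three assertions in turn: the convergence \eqref{convauto} is soft, the existence of $u_n$ is the heart of the matter, and \eqref{minsequence} follows by sandwiching.

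For \eqref{convauto}, since $V_n \ge 0$, one trivially has $\lambda_{p,q}(\Omega; V_n) \ge \lambda_{p,q}(\Omega)$. For the matching upper bound, I pick any $\varphi \in C^\infty_0(\Omega)$ with $\|\varphi\|_{L^q(\Omega)}=1$: since $\varphi$ has compact support, $\int_\Omega |x|\,|\varphi|^p\,dx < \infty$, and
\[
\limsup_{n\to\infty}\lambda_{p,q}(\Omega; V_n) \le \lim_{n\to\infty}\mathcal{G}_{p,n}(\varphi) = \int_\Omega |\nabla \varphi|^p\,dx.
\]
Taking the infimum over $\varphi$ and using density of $C^\infty_0(\Omega)$ in $W^{1,p}_0(\Omega)$ yields the reverse inequality.

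For the existence of $u_n$, fix $n$ and take a minimizing sequence $\{v_k\}_k \subseteq W^{1,p}_0(\Omega;|x|)$ with $\|v_k\|_{L^q(\Omega)} = 1$; replacing $v_k$ by $|v_k|$ leaves both the functional and the constraint unchanged, so I may assume $v_k \ge 0$. The bound $\mathcal{G}_{p,n}(v_k) \le C$ splits as
\[
\int_\Omega |\nabla v_k|^p\,dx \le C, \qquad \int_\Omega |x|\,|v_k|^p\,dx \le C\,(n+1),
\]
so along a subsequence $v_k \rightharpoonup u_n$ weakly in $W^{1,p}_0(\Omega)$ and a.e.\ in $\Omega$, with $u_n \ge 0$. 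The key point is to upgrade this to strong convergence in $L^q(\Omega)$; the confining potential provides the necessary tightness via
\[
\int_{\Omega \setminus B_R} |v_k|^p\,dx \le \frac{1}{R}\int_\Omega |x|\,|v_k|^p\,dx \le \frac{C\,(n+1)}{R}.
\]
When $q < \infty$, combining this tail decay in $L^p$ with the uniform Sobolev bound $\|v_k\|_{L^r(\Omega)} \le C$ (valid for any $r$ in the subcritical range, by \eqref{mara} and $\lambda_p(\Omega)>0$) and interpolating yields vanishing $L^q$ tails, and together with the local Rellich embedding $W^{1,p}(\Omega \cap B_R) \hookrightarrow L^q(\Omega \cap B_R)$ this forces $v_k \to u_n$ strongly in $L^q(\Omega)$, so $\|u_n\|_{L^q(\Omega)} = 1$. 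In the Morrey regime $q = \infty$ (only possible for $p > N$), $\{v_k\}$ is uniformly Hölder continuous; any point $x_k$ with $v_k(x_k)$ close to $1$ cannot escape to infinity without forcing $\int_\Omega V_n\,|v_k|^p \to \infty$, so the essential suprema stay in a bounded set, Arzelà--Ascoli gives local uniform convergence, and $\|u_n\|_{L^\infty(\Omega)} = 1$. Weak lower semicontinuity of $\int |\nabla \cdot|^p$ together with Fatou's lemma applied to the non-negative integrand $V_n\,|v_k|^p$ then gives $\mathcal{G}_{p,n}(u_n) \le \liminf_k \mathcal{G}_{p,n}(v_k) = \lambda_{p,q}(\Omega; V_n)$, so $u_n$ realizes the infimum.

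Finally, \eqref{minsequence} is immediate from
\[
\lambda_{p,q}(\Omega) \le \int_\Omega |\nabla u_n|^p\,dx \le \mathcal{G}_{p,n}(u_n) = \lambda_{p,q}(\Omega; V_n),
\]
combined with \eqref{convauto}. The main obstacle is the compactness step: one must use the confining potential to prevent mass escape at infinity and then adapt the tail-control argument to the appropriate Sobolev regime, splitting the Morrey case from the genuinely subcritical one.
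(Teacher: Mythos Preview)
Your argument is correct: the soft limit for \eqref{convauto}, the direct-method attack on the existence of $u_n$ via the confinement-induced $L^p$ tightness plus interpolation (with the separate Arzel\`a--Ascoli treatment for $q=\infty$), and the sandwich for \eqref{minsequence} all go through as you describe. The paper does not actually reprove this lemma---it simply imports it from \cite[Lemma~4.1]{BraBriPri}---so there is no in-paper proof to compare against; your approach is the natural one and is in the spirit of what the authors use in that reference.
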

\begin{rem}
%\label{rem:lodo}
The fact that $u_n$ can be chosen to be non-negative is standard: indeed, $|u_n|$ is still admissible,  while both the functional and the constraint are invariant by the change $u\mapsto|u|$. Moreover, by comparing \eqref{convauto} and \eqref{minsequence}, we easily get the following useful information
\begin{equation}
\label{azzero}
\lim_{n\to\infty} \int_\Omega V_n\,|u_n|^p\,dx=0.
\end{equation}
Finally, by minimality, we remark that each $u_n$ is a weak solution of the following Euler-Lagrange equation
\begin{equation}
\label{eqvan}
-\Delta_p u_n+V_n\,u_n^{p-1}=\lambda_{p,q}(\Omega;V_n)\,u_n^{q-1},\qquad \text{in}\ \Omega.
\end{equation}
\end{rem}
\begin{rem}[Uniform $L^\infty$ bound]
By using that the sequence  $n\mapsto \lambda_{p,q}(\Omega;V_n)$ is non-increasing, we get that each $u_n$ is a weak subsolution of
\[
-\Delta_p u_n\le \lambda_{p,q}(\Omega;V_n)\,u_n^{q-1}\leq  \lambda_{p,q}(\Omega;V_1)\,u_n^{q-1}\ ,\qquad \text{in}\ \Omega.
\]
Thus, by appealing to \cite[Lemma 2.3]{BraBriPri}, we get that $u_n\in L^\infty(\Omega)$ together with the following uniform estimate
\begin{equation}
\label{stimainfinito}
\|u_n\|_{L^\infty(\Omega)}\le C_{N,p,q}\,\Big(\lambda_{p,q}(\Omega;V_1)\Big)^\frac{N}{p\,q-(q-p)\,N}=:M.
\end{equation}
Observe that we also used that $u_n$ has unit $L^q(\Omega)$ norm. 
\end{rem}
Actually, the property \eqref{azzero} can be improved. This is the content of the next simple result, which will be crucially exploited. The proof is a bit lenghty, though elementary.
\begin{lm}
\label{lm:fortissimo}
With the notation of the previous result, we have 
\[
\lim_{n\to\infty}\int_\Omega V_n^k\,|\nabla u_n|^p\,dx=\lim_{n\to\infty} \int_\Omega V_n^{k}\,|u_n|^p\,dx=0,\qquad \text{for every}\ k\in\mathbb{N}\setminus\{0\}.
\]
\end{lm}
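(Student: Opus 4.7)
The plan is to test the Euler--Lagrange equation \eqref{eqvan} against the function $\varphi = V_n^k\,u_n$, derive a closed algebraic identity relating the quantities we want to control, and then iterate on $k$, using \eqref{azzero} as the base. For bookkeeping, set
\[
I_k^{(n)} := \int_\Omega V_n^k\,|\nabla u_n|^p\,dx,\qquad J_k^{(n)} := \int_\Omega V_n^k\,u_n^p\,dx,
\]
so that the claim becomes $I_k^{(n)}\to 0$ and $J_k^{(n)}\to 0$ for every $k\ge 1$. From \eqref{minsequence} combined with the Poincar\'e inequality we know that $I_0^{(n)}$ and $J_0^{(n)}$ are uniformly bounded in $n$, while $J_1^{(n)}\to 0$ is exactly \eqref{azzero}.

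Using $\nabla(V_n^k\,u_n)=V_n^k\,\nabla u_n+k\,V_n^{k-1}\,u_n\,\nabla V_n$ together with the a.e.\ identity $|\nabla V_n|=1/(n+1)$, testing \eqref{eqvan} with $\varphi=V_n^k\,u_n$ yields
\[
I_k^{(n)}+J_{k+1}^{(n)}=\lambda_{p,q}(\Omega;V_n)\int_\Omega V_n^k\,u_n^q\,dx\ -\ k\int_\Omega V_n^{k-1}\,u_n\,|\nabla u_n|^{p-2}\nabla u_n\cdot\nabla V_n\,dx.
\]
The cross term is controlled by splitting $V_n^{k-1}=V_n^{(k-1)/p}\cdot V_n^{(k-1)(p-1)/p}$ and applying H\"older with exponents $p$ and $p/(p-1)$, which gives
\[
\Bigl|\,k\!\int_\Omega V_n^{k-1}u_n|\nabla u_n|^{p-2}\nabla u_n\cdot\nabla V_n\,dx\Bigr|\le \frac{k}{n+1}\,\bigl(J_{k-1}^{(n)}\bigr)^{1/p}\bigl(I_{k-1}^{(n)}\bigr)^{(p-1)/p}.
\]
The other right-hand side term is controlled via $0\le u_n\le M$ from \eqref{stimainfinito} and the monotonicity of $n\mapsto\lambda_{p,q}(\Omega;V_n)$, producing the estimate $\lambda_{p,q}(\Omega;V_n)\int V_n^k u_n^q\,dx\le \lambda_{p,q}(\Omega;V_1)\,M^{q-p}\,J_k^{(n)}$. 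Combining:
\[
I_k^{(n)}+J_{k+1}^{(n)}\le \lambda_{p,q}(\Omega;V_1)\,M^{q-p}\,J_k^{(n)}+\frac{k}{n+1}\,\bigl(J_{k-1}^{(n)}\bigr)^{1/p}\bigl(I_{k-1}^{(n)}\bigr)^{(p-1)/p}.
\]

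From here the induction is immediate. For $k=1$ the right-hand side tends to zero because $J_1^{(n)}\to 0$ by \eqref{azzero} and $J_0^{(n)}, I_0^{(n)}$ are bounded, so both non-negative quantities $I_1^{(n)}$ and $J_2^{(n)}$ vanish in the limit. The inductive step at level $k\ge 2$ uses the conclusion at level $k-1$ (which provides $I_{k-1}^{(n)}\to 0$ and $J_k^{(n)}\to 0$) to show that the right-hand side again tends to zero, whence $I_k^{(n)},J_{k+1}^{(n)}\to 0$.

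The main technical obstacle is the admissibility of $V_n^k\,u_n$ as a test function in $W^{1,p}_0(\Omega)$: a priori we only know $u_n\in W^{1,p}_0(\Omega;|x|)\cap L^\infty(\Omega)$, which controls only the first moment. I expect to handle this by a routine truncation, testing with $\min(V_n,A)^k\,\eta_R\,u_n$ where $\eta_R$ is a smooth cut-off supported in $B_{2R}$, carrying out the identity above with the additional boundary terms (which are controlled by $|\nabla\eta_R|\le C/R$ together with the uniform bounds on $I_0^{(n)}$ and $J_0^{(n)}$), and then letting first $R\to\infty$ and then $A\to\infty$ via monotone convergence to obtain uniform finiteness of $I_k^{(n)}$ and $J_{k+1}^{(n)}$ at each stage of the induction. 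The case $q=\infty$, which arises only for $p>N$, is handled by adapting the Euler--Lagrange condition to the $L^\infty$ constraint (as in \cite[Lemma A.1]{BraBriPri}); the same inductive scheme then applies.
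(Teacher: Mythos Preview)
Your proposal is correct and follows essentially the same strategy as the paper: test \eqref{eqvan} with $V_n^k u_n$ (regularized by a cutoff), derive a recursive estimate linking level $k$ to level $k-1$, and induct starting from \eqref{azzero}. The only cosmetic differences are that the paper uses Young's inequality on the cross term (rather than your H\"older split) and a single spatial cutoff $\eta\in C^\infty_0(B_{R+1})$ with $|\nabla\eta|\le C$ (absorbing the resulting boundary term into the coefficient of $J_k^{(n)}$), which is slightly more economical than your double truncation $\min(V_n,A)^k\,\eta_R$; your final remark about $q=\infty$ is unnecessary, since this lemma is only invoked in the proof of Theorem~\ref{teo:main}, where $q<\infty$.
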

\begin{proof}
We fix $R>0$ and take $\eta\in C^\infty_0(B_{R+1})$ a cut-off function such that 
\[
0\le \eta\le 1,\qquad\eta\equiv 1\ \text{on}\ B_R,\qquad \|\nabla \eta\|_{L^\infty}\le C,
\]
for a universal constant $C>0$. We test the weak formulation of \eqref{eqvan} with the choice
\[
\varphi=V_n^k\,u_n\,\eta^p.
\]
We then obtain
\begin{equation}
\label{daldo}
\begin{split}
\int_\Omega |\nabla u_n|^p\,V_n^k\,\eta^p\,dx&+p\,\int_\Omega \langle |\nabla u_n|^{p-2}\,\nabla u_n,\nabla \eta\rangle\,\eta^{p-1}\,u_n\,V_n^k\,dx\\
&+\int_\Omega \langle |\nabla u_n|^{p-2}\,\nabla u_n,\nabla V_n^k\rangle\,u_n\,\eta^p\,dx\\
&+\int_\Omega V_n^{k+1}\,u_n^p\,\eta^p\,dx=\lambda_{p,q}(\Omega;V_n)\,\int_\Omega u_n^q\,V_n^k\,\eta^p\,dx.
\end{split}
\end{equation}
By using Cauchy-Schwarz and Young inequalities, we can estimate 
\[
\begin{split}
p\,\int_\Omega \langle |\nabla u_n|^{p-2}\,\nabla u_n,\nabla \eta\rangle\,\eta^{p-1}\,u_n\,V_n^k\,dx&\ge -p\,\int_\Omega  |\nabla u_n|^{p-1}\,|\nabla \eta|\,\eta^{p-1}\,u_n\,V_n^k\,dx\\
&\ge-\delta\,(p-1)\,\int_\Omega  |\nabla u_n|^{p}\,\eta^{p}\,V_n^k\,dx\\
&-\delta^{1-p}\,\int_\Omega V_n^k\,u_n^p\,|\nabla\eta|^p\,dx,
\end{split}
\]
for every $\delta>0$. Thus, from \eqref{daldo} we get
\begin{equation}
\label{daldo2}
\begin{split}
(1-(p-1)\,\delta)\,\int_\Omega |\nabla u_n|^p\,V_n^k\,\eta^p\,dx&+\int_\Omega \langle |\nabla u_n|^{p-2}\,\nabla u_n,\nabla V_n^k\rangle\,u_n\,\eta^p\,dx\\
&+\int_\Omega V_n^{k+1}\,u_n^p\,\eta^p\,dx\le \lambda_{p,q}(\Omega;V_n)\,\int_\Omega u_n^q\,V_n^k\,\eta^p\,dx\\
&+\delta^{1-p}\,\int_\Omega V_n^k\,u_n^p\,|\nabla\eta|^p\,dx.
\end{split}
\end{equation}
Analogously, by taking into account that $|\nabla V_n|=1/(n+1)$, we have
\[
\begin{split}
\int_\Omega \langle |\nabla u_n|^{p-2}\,\nabla u_n,\nabla V_n^k\rangle\,u_n\,\eta^p\,dx&\ge -\frac{k}{n+1}\,\int_\Omega |\nabla u_n|^{p-1}\,u_n\,V_n^{k-1}\,\eta^p\,dx\\
&\ge -\frac{k}{n+1}\,\frac{p-1}{p}\,\int_\Omega |\nabla u_n|^p\,V_n^{k-1}\,\eta^p\,dx\\
&-\frac{k}{n+1}\,\frac{1}{p}\,\int_\Omega u_n^p\,V_n^{k-1}\,\eta^p\,dx.
\end{split}
\]
By using this estimate in \eqref{daldo2}, we obtain
\[
\begin{split}
(1-(p-1)\,\delta)\,\int_\Omega |\nabla u_n|^p\,V^k_n\,\eta^p\,dx&+\int_\Omega V_n^{k+1}\,u_n^p\,\eta^p\,dx\\
&\le \lambda_{p,q}(\Omega;V_n)\,\int_\Omega u_n^q\,V_n^k\,\eta^p\,dx\\
&+\delta^{1-p}\,\int_\Omega V_n^k\,u_n^p\,|\nabla\eta|^p\,dx\\
&+\frac{k}{n+1}\,\frac{p-1}{p}\,\int_\Omega |\nabla u_n|^p\,V_n^{k-1}\,\eta^p\,dx\\
&+\frac{k}{n+1}\,\frac{1}{p}\,\int_\Omega u_n^p\,V_n^{k-1}\,\eta^p\,dx.
\end{split}
\]
This is valid for every $\delta>0$: we choose $\delta=1/(2\,(p-1))$ and use the properties of the cut-off function $\eta$, so to obtain
\[
\begin{split}
\frac{1}{2}\,\int_{\Omega\cap B_R} |\nabla u_n|^p\,V_n^k\,dx+\int_{\Omega\cap B_R} V_n^{k+1}\,u_n^p\,dx&\le \lambda_{p,q}(\Omega;V_n)\,\int_{\Omega\cap B_{R+1}} u_n^q\,V_n^k\,dx\\
&+C\,(2\,(p-1))^{p-1}\,\int_{\Omega\cap B_{R+1}} V_n^k\,u_n^p\,dx\\
&+\frac{k}{n+1}\,\frac{p-1}{p}\,\int_{\Omega\cap B_{R+1}} |\nabla u_n|^p\,V_n^{k-1}\,dx\\
&+\frac{k}{n+1}\,\frac{1}{p}\,\int_{\Omega\cap B_{R+1}} V_n^{k-1}\,u_n^p\,dx.
\end{split}
\]
On the right-hand side, we can further use that 
\[
\lambda_{p,q}(\Omega;V_n)\le \lambda_{p,q}(\Omega;V_1),
\]
and
\[
\int_{\Omega\cap B_{R+1}} u_n^q\,V^k_n\,dx\le \|u_n\|_{L^\infty(\Omega)}^{q-p}\,\int_{\Omega\cap B_{R+1}}  u_n^p\,V^k_n\,dx\le M^{q-p}\,\int_{\Omega\cap B_{R+1}}  u_n^p\,V^k_n\,dx,
\]
also thanks to \eqref{stimainfinito}. We also observe that
\[
\frac{1}{n+1}\,V_n^{k-1}=\frac{|x|^{k-1}}{(n+1)^k}\le \frac{|x|^k}{(n+1)^k}+\frac{1}{(n+1)^k}=V_n^k+\frac{1}{(n+1)^k}.
\]
With simple manipulations, we can thus obtain
\[
\begin{split}
\int_{\Omega\cap B_R} |\nabla u_n|^p\,V_n^k\,dx+\int_{\Omega\cap B_R} V_n^{k+1}\,u_n^p\,dx&\le \left(C+\frac{2}{p}\, k\right)\,\int_{\Omega\cap B_{R+1}} u_n^p\,V_n^k\,dx\\
&+\frac{2\,k}{n+1}\,\frac{(p-1)}{p}\,\int_{\Omega\cap B_{R+1}} |\nabla u_n|^p\,V_n^{k-1}\,dx\\
&+\frac{2\,k}{(n+1)^k}\,\frac{1}{p}\,\int_{\Omega\cap B_{R+1}} u_n^p\,dx,
\end{split}
\]
for a constant $C=C(N,p,q,\Omega)>0$. Finally, we estimate the last $L^p$ norm as follows 
\[
\int_{\Omega\cap B_{R+1}} u_n^p\,dx\le \int_\Omega u_n^p\,dx\le \frac{1}{\lambda_p(\Omega)}\,\int_\Omega |\nabla u_n|^p\,dx\le \frac{\lambda_{p,q}(\Omega;V_n)}{\lambda_p(\Omega)}\le \frac{\lambda_{p,q}(\Omega;V_1)}{\lambda_p(\Omega)}.
\]
If we set for simplicity
\[
\mathcal{I}_{n,k}(R)=\int_{\Omega\cap B_R} u_n^p\,V_n^k\,dx\quad \text{and}\quad \mathcal{J}_{n,k}(R)=\int_{\Omega\cap B_R} |\nabla u_n|^p\,V_n^{k-1}\,dx,\qquad n\in\mathbb{N},\ k\in\mathbb{N}\setminus\{0\},
\]
we have finally obtained the interlaced recursive estimate
\begin{equation}
\label{recorsa}
\mathcal{I}_{n,k+1}(R)+\mathcal{J}_{n,k+1}(R)\le \left(C+\frac{2}{p}\,  k\right)\,\mathcal{I}_{n,k}(R+1)+\frac{C\, k}{n+1}\,\mathcal{J}_{n,k}(R+1)+\frac{C\,k}{(n+1)^k},
\end{equation}
possibly for a different constant $C=C(N,p,q,\Omega)>0$. By proceeding inductively on $k\in\mathbb{N}\setminus\{0\}$, we can obtain at first that
\[
\mathcal{I}_{n,k}:=\int_\Omega V_n^{k}\,u_n^p\,dx<+\infty\qquad \text{and}\qquad \mathcal{J}_{n,k}:=\int_\Omega V_n^{k-1}\, |\nabla u_n|^p\,dx<+\infty.
\]
Indeed, this is true for $k=1$ by construction. Moreover, if $\mathcal{I}_{n,k}<+\infty$ and $\mathcal{J}_{n,k}<+\infty$ for a certain $k\ge 1$, by taking the limit as $R$ goes to $+\infty$ in \eqref{recorsa} and using the Monotone Convergence Theorem, we get that $\mathcal{I}_{n,k+1}<+\infty$ and $\mathcal{J}_{n,k+1}<+\infty$, as well. In particular, from \eqref{recorsa} we get 
\begin{equation}
\label{rincorsa}
\mathcal{I}_{n,k+1}+\mathcal{J}_{n,k+1}\le \left(C+\frac{2}{p}\,k\right)\,\mathcal{I}_{n,k}+\frac{k\, C}{n+1}\,\mathcal{J}_{n,k}+\frac{C\,k}{(n+1)^k},
\end{equation}
for a constant $C=C(N,p,q,\Omega)>0$. The claim can now be easily obtained from \eqref{rincorsa} by an induction argument over $k$. It is sufficient to observe that for $k=1$ we have
\[
\lim_{n\to\infty}\mathcal{I}_{n,1}=\lim_{n\to\infty} \int_\Omega V_n\,u_n^p\,dx=0,
\]
by \eqref{azzero}, while obviously
\[
\lim_{n\to\infty} \frac{1}{n+1}\,\mathcal{J}_{n,1}=\lim_{n\to\infty} \frac{1}{n+1}\,\int_\Omega |\nabla u_n|^p\,dx=0,
\]
thanks to \eqref{minsequence}. This concludes the proof.
\end{proof}

\section{Existence of extremals: periodic sets}
\label{sec:6}
We are now ready for the main result of the paper.
\begin{teo}[Case $q<\infty$]
\label{teo:main}
Let $1<p<\infty$ and assume that  $q>p$ satisfies \eqref{exponents}, with $q<\infty$. Let $\mathbf{t}=(t_1,\dots,t_N)\in\mathbb{R}^N$ be such that $t_i>0$, for every $i\in\{1,\dots,N\}$ and let $K\subsetneq \overline{Q_{1/2}}$ be a compact set such that
\[
\mathrm{cap}_p(K; Q_1)>0.
\]
Let $\Omega\subseteq\mathbb{R}^N$ be a $\mathbf{t}-$periodically perforated open set, generated by $K$ (recall Definition \ref{defi:apps_intro}).
Then, the infimum defining $\lambda_{p,q}(\Omega)$ is attained by some non-negative function $u\in W^{1,p}_0(\Omega)\setminus\{0\}$, such that
\[
u\in L^\infty(\Omega)\qquad \text{and}\qquad \|u\|_{L^\infty(D_{\mathbf{t}}(Q_{1/2}\setminus K))}=\|u\|_{L^\infty(\Omega)}.
\]
\end{teo}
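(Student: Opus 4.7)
The plan is to execute the Lieb-type scheme announced in the introduction. The starting point is Lemma \ref{lm:vanconf}, which provides a sequence $\{u_n\}_{n\in\mathbb{N}}\subseteq W^{1,p}_0(\Omega;|x|)$ of non-negative minimizers for the penalized problems $\lambda_{p,q}(\Omega;V_n)$, with $\|u_n\|_{L^q(\Omega)}=1$. Along this sequence, \eqref{minsequence}, \eqref{azzero} and \eqref{stimainfinito} supply $\int_\Omega|\nabla u_n|^p\,dx\to\lambda_{p,q}(\Omega)$, $\int_\Omega V_n\,u_n^p\,dx\to 0$, and $\|u_n\|_{L^\infty(\Omega)}\le M$ uniformly in $n$. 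Using the pointwise bound on the sublevel set $\{u_n\le\varepsilon\}$ one has the splitting
\[
1=\int_\Omega u_n^q\,dx\le \varepsilon^{q-p}\int_\Omega u_n^p\,dx+M^q\,|\{u_n>\varepsilon\}|,
\]
which, combined with the Poincar\'e inequality (Lemma \ref{lm:nonbanale}), forces $|\{u_n>\varepsilon\}|\ge\delta>0$ for some $\varepsilon,\delta$ independent of $n$. Lemma \ref{lm:lieb} then yields integer translations $\mathbf{i}_n\in\mathbb{Z}^N$ along which $\tilde u_n:=u_n(D_\mathbf{t}(\mathbf{i}_n)+\cdot)$ converges weakly in $W^{1,p}(\Omega)$, and a.e.\ on $\Omega$, to some non-trivial non-negative $u\in W^{1,p}_0(\Omega)$.

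The delicate step, and the main obstacle, is to upgrade this to a.e.\ convergence of the gradients $\nabla\tilde u_n\to\nabla u$. Each $u_n$ solves the Euler-Lagrange equation \eqref{eqvan}; by $\mathbf{t}$-periodicity of $\Omega$ this translates to an analogous equation for $\tilde u_n$, with shifted potential $\tilde V_n(x)=V_n(D_\mathbf{t}(\mathbf{i}_n)+x)$. Lemma \ref{lm:fortissimo} guarantees that all relevant moments of the potential against $|u_n|^p$ and $|\nabla u_n|^p$ vanish in the limit, hence any contribution of $\tilde V_n$ on a fixed ball becomes negligible. Testing the difference of the equations for $\tilde u_n$ and $u$ against a smooth truncation of $\tilde u_n-u$ and applying the standard monotonicity inequality for the $p$-Laplacian, one obtains strong convergence $\nabla\tilde u_n\to\nabla u$ in $L^p_{\mathrm{loc}}(\Omega)$, as in the argument of \cite{BraBriPri}; a.e.\ convergence follows along a subsequence.

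With a.e.\ convergence of both $\tilde u_n$ and $\nabla\tilde u_n$, the Brezis-Lieb lemma gives the decompositions
\[
1=\int_\Omega|\tilde u_n-u|^q\,dx+\int_\Omega|u|^q\,dx+o(1),\qquad \lambda_{p,q}(\Omega)=\int_\Omega|\nabla(\tilde u_n-u)|^p\,dx+\int_\Omega|\nabla u|^p\,dx+o(1).
\]
Writing $a_n=\|\tilde u_n-u\|_{L^q(\Omega)}^q$ and $b=\|u\|_{L^q(\Omega)}^q$, so that $a_n+b\to 1$ and $b>0$, the definition of $\lambda_{p,q}(\Omega)$ bounds each gradient term from below by $\lambda_{p,q}(\Omega)\,a_n^{p/q}$ and $\lambda_{p,q}(\Omega)\,b^{p/q}$ respectively. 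Passing to the limit gives $1\ge \limsup_n a_n^{p/q}+b^{p/q}$, and since $q>p$ the map $t\mapsto t^{p/q}$ is strictly subadditive on $[0,1]$, which together with $b>0$ forces $a_n\to 0$. Hence $\|u\|_{L^q(\Omega)}=1$ and $\int_\Omega|\nabla u|^p\,dx=\lambda_{p,q}(\Omega)$, so $u$ is the sought extremal.

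Finally, the uniform bound \eqref{stimainfinito} passes to the limit yielding $u\in L^\infty(\Omega)$, and by elliptic regularity for the Lane-Emden equation $u$ is continuous on $\Omega$. A concluding translation of $u$ by a suitable element of $D_\mathbf{t}(\mathbb{Z}^N)$, allowed by the $\mathbf{t}$-periodicity of $\Omega$, relocates the essential supremum of $u$ into the central fundamental cell $D_\mathbf{t}(Q_{1/2}\setminus K)$, giving the last assertion of the statement.
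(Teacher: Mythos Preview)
Your proposal is correct and follows essentially the same route as the paper: the penalized sequence of Lemma \ref{lm:vanconf}, translation via Lemma \ref{lm:lieb}, a.e.\ gradient convergence through the Euler--Lagrange equation combined with Lemma \ref{lm:fortissimo} and $p$-Laplacian monotonicity, then Brezis--Lieb with strict subadditivity of $t\mapsto t^{p/q}$. Two small remarks: your direct splitting using the uniform $L^\infty$ bound \eqref{stimainfinito} is a valid shortcut in place of the paper's appeal to the $pqr$-Lemma; and for the final relocation of the supremum into the central cell, continuity alone is not enough---the paper invokes the exponential decay of extremals from \cite[Theorem 7.3]{BraBriPri} to guarantee that the supremum is actually attained in some finite union of cells, which then allows the translation.
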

\begin{proof}
We first observe that $\lambda_p(\Omega)>0$, thanks to Lemma \ref{lm:nonbanale}. By \eqref{mara}, this in turn implies that $\lambda_{p,q}(\Omega)>0$, as well.
\par
We take $\{u_n\}_{n\in\mathbb{N}}$ to be the minimizing sequence for $\lambda_{p,q}(\Omega)$ constructed in Lemma \ref{lm:vanconf}. 
There is no loss of generality in assuming that
\begin{equation}\label{gradientilimitati}
\|\nabla u_n\|^p_{L^p(\Omega)}\le2\,\lambda_{p,q}(\Omega),\qquad \text{for every}\ n\in\mathbb{N}.
\end{equation} 
Moreover, we notice that it holds
\begin{equation}\label{marta}
\|u_n\|_{L^q(\Omega)}=1,\qquad \|u_n\|_{L^p(\Omega)}\le \left(\frac{2\,\lambda_{p,q}(\Omega)}{\lambda_p(\Omega)}
\right)^{\frac {1}{p}},
\end{equation}
and, by the Gagliardo-Nirenberg interpolation inequality, we have
\begin{equation}\label{r}
\|u_n\|_{L^r(\Omega)}\le G_{N,p,r} \|\nabla u_n\|^\theta_{L^p(\Omega)}\,\| u_n\|^{1-\theta}_{L^p(\Omega)}\le \left(2\,\lambda_{p,q}(\Omega)\right)^{\frac{1}{p}}\,\Big(\lambda_p(\Omega)\Big)^\frac{\theta-1}{p}.
\end{equation}
for every $q<r<p^*$ when $p<N$, and for every $q<r<\infty$ when $p\ge N$.
Here $\theta=\theta(N,p,r)\in(0,1)$ is an exponent dictated by scale invariance, its precise value has no bearing. 
\par
In particular, \eqref{marta} and \eqref{r} ensure the applicability  of the so-called {\it $pqr-$Lemma}, originally devised in \cite[Lemma 2.1]{FLL}, to the sequence $\{u_n\}_{n\in\mathbb{N}}$. Accordingly, we can find uniform constants $\varepsilon,\delta>0$ such that
\begin{equation}\label{pqr}
\left|\Big\{x\in\Omega\, :\, u_n(x)>\varepsilon\Big\}\right|>\delta,\quad \text{ for all }n\in\mathbb{N}.
\end{equation}
Thanks to \eqref{gradientilimitati} and \eqref{pqr} we can apply Lemma \ref{lm:lieb}: let  $\{\mathbf{i}_n\}_{n\in\mathbb{N}}\subseteq\mathbb{Z}^N$  be the corresponding sequence of vectors. In order to simplify the notation, we set 
\[
\widetilde{u}_n(x):=u_n(D_{\mathbf{t}}(\mathbf{i}_n)+x).
\] 
Thus, the sequence $\{\widetilde{u}_n\}_{n\in\mathbb{N}}$ weakly converges in $W^{1,p}(\Omega)$ to some $\widetilde{u}\in W^{1,p}_0(\Omega)\setminus\{0\}$. Thanks to the periodicity of $\Omega$, we have that $\{\widetilde{u}_n\}_{n\in\mathbb{N}}$ is still a minimizing sequence for $\lambda_{p,q}(\Omega)$. Indeed, we have 
\[
\|\widetilde{u}_n\|_{L^q(\Omega)}=\|u_n\|_{L^q(\Omega)}=1,\qquad \lim_{n\to\infty}\int_\Omega |\nabla \widetilde{u}_n|\,dx=\lim_{n\to\infty}\int_\Omega |\nabla u_n|\,dx=\lambda_{p,q}(\Omega).
\]
By virtue of the weak lower semi-continuity of the $L^p$ norm with respect to the weak convergence, in order to prove the theorem we only need to show that 
\begin{equation}\label{normalimite}
\|\widetilde{u}\|_{L^q(\Omega)}=1.
\end{equation}
To this aim, the key step consists in proving that both the sequence of functions $\{\widetilde{u}_n\}_{n\in\mathbb{N}}$ and the sequence of gradients $\{\nabla\widetilde{ u}_n\}_{n\in\mathbb{N}}$ converge almost everywhere in $\Omega$. With this property at our disposal, we will get \eqref{normalimite} by applying the same argument as in \cite[Lemma 2.7]{LiHLS}. 
\par
The almost everywhere convergence for the sequence of functions is easily inferred. Indeed, we can consider $\{\widetilde{u}_n\}_{n\in\mathbb{N}}$ and $\widetilde{u}$ as defined on the whole $\mathbb{R}^N$, by extending them to be $0$ on the complement of $\Omega$. The extended functions belong to $W^{1,p}(\mathbb{R}^N)$ (see for example \cite[Lemma 3.7.9]{BraBook}) and we still have weak convergence in this space.
In particular, we have $\{\widetilde{u}_n\}_{n\in\mathbb{N}}\subseteq W^{1,p}(B_R)$, for every $R>0$.
Hence, by exploiting the compactness of the embedding $W^{1,p}(B_R)\hookrightarrow L^p(B_R)$, the weak convergence in $W^{1,p}_0(\Omega)$ exposed above and  a standard argument (see for example \cite[Lemma 3.8.7 \& Remark 3.9.5]{BraBook}) we obtain 
\begin{equation}
\label{convpalle}
\lim_{n\to\infty} \|\widetilde{u}_n-\widetilde{u}\|_{L^p(B_R)}=\lim_{n\to\infty} \|\widetilde{u}_n-\widetilde{u}\|_{L^p(\Omega\cap B_R)}=0,\qquad \text{for every}\ R>0.
\end{equation} 
In order to treat the sequence of gradients, we first observe that, thanks to Lemma \ref{lm:fortissimo} and using that
\[
\Omega-D_\mathbf{t}(\mathbf{i}_n)=\Omega,\qquad \text{for every}\ n\in\mathbb{N} ,
\]
we have
\begin{equation}
\label{fortissima_traslata}
\lim_{n\to\infty} \int_\Omega \widetilde{V}_n^{k+1}\,|\widetilde{u}_n|^p\,dx=0,\qquad \text{where}\ \widetilde{V}_n(x):=V_n(x+D_\mathbf{t}(\mathbf{i}_n)),
\end{equation}
for every $k\in\mathbb{N}$. Furthermore, it is elementary to verify that each $\widetilde{u}_n$ solves the following problem\footnote{Observe that for every given $n\in\mathbb{N}$, we have
\[
\int_\Omega \widetilde{V}_n\,|u|^p\,dx=\frac{1}{n+1}\,\int_\Omega |x+D_\mathbf{t}(\mathbf{i}_n)|\,|u|^p\,dx<+\infty,\qquad \text{for every}\ u\in W^{1,p}_0(\Omega;|x|).
\]
It is sufficient to observe that 
\[
\int_\Omega |x+D_\mathbf{t}(\mathbf{i}_n)|\,|u|^p\,dx\le \int_\Omega |x|\,|u|^p\,dx+|D_\mathbf{t}(\mathbf{i}_n)|\,\int_\Omega |u|^p\,dx,
\]
and use that both terms are finite, thanks to the assumption $u\in W^{1,p}_0(\Omega;|x|)$.} 
\[
\inf_{\varphi\in W^{1,p}_0(\Omega;|x|)}\left\{\int_{\Omega}|\nabla \varphi|^{p}\,dx +\int_{\Omega}\widetilde{V}_n\,|\varphi|^{p}\,dx\,:\, \|\varphi\|_{L^q(\Omega)}=1\right\},
\]
 and that this infimum still coincides with $\lambda_{p,q}(\Omega;V_n)$.
Thus, $\widetilde{u}_n$ weakly solves the relevant Euler-Lagrange equation
\begin{equation}
\label{eqtrans}
-\Delta_p \widetilde{u}_n+\widetilde{V}_n\,\widetilde u_n^{p-1}=\lambda_{p,q}(\Omega;V_n)\,\widetilde u_n^{q-1},\qquad \text{in}\ \Omega.
\end{equation}
We now proceed as in the proof of \cite[Theorem 5.1]{BraBriPri}, but this time we have to pay attention to the presence of the translations $x\mapsto x+D_\mathbf{t}(\mathbf{i}_n)$.
Let $R>0$ and $\eta\in C^{\infty}_0(\mathbb{R}^N)$ be a cut-off function such that
\[
0\le \eta\le 1,\qquad \eta=1 \text{ on } B_{R},\qquad \eta=0\ \text{on}\ \mathbb{R}^{N}\setminus B_{2R}, \qquad \|\nabla \eta\|_{\infty}\le \frac{C}{R}.
\]
We insert in the weak formulation of \eqref{eqtrans} the admissible test function $\eta\, (\widetilde u_n-\widetilde{u})\in W^{1,p}_0(\Omega)$. This yields
\[
\begin{split}
\int_\Omega \langle|\nabla \widetilde u_n|^{p-2}\,\nabla \widetilde u_n, \nabla \widetilde u_n-\nabla \widetilde{u}\rangle\,\eta\,dx &=\lambda_{p,q}(\Omega; V_n)\,\int_{\Omega}\widetilde u_n^{q-1}\,(\widetilde u_{n}-\widetilde{u})\,\eta\,dx\\
&-\int_{\Omega}\widetilde{V}_n\,\widetilde u_n^{p-1}\,(\widetilde u_n-\widetilde{u})\,\eta\,dx\\
&-\int_\Omega \langle|\nabla \widetilde u_n|^{p-2}\,\nabla \widetilde u_n, \nabla \eta\rangle\,(\widetilde u_n-\widetilde{u}) \,dx .
\end{split}
\]
The first and third integrals in the right-hand side converge to $0$, as $n$ goes to $\infty$: this fact is quite straightforward. Indeed, by exploiting the uniform upper bounds \eqref{stimainfinito},   \eqref{gradientilimitati} and the properties of the cuf-off function $\eta$, we have
\[
\lambda_{p,q}(\Omega; V_n)\,\left|\int_{\Omega}\widetilde u_n^{q-1}\,(\widetilde u_{n}-\widetilde{u})\,\eta\,dx\right|\le \lambda_{p,q}(\Omega;V_n)\,M^{q-1}\,|B_{2R}|^\frac{p-1}{p}\,\|\widetilde u_n-\widetilde{u}\|_{L^p(B_{2R})},
\]
and
\[
\left|\int_\Omega \langle|\nabla \widetilde u_n|^{p-2}\,\nabla \widetilde u_n, \nabla \eta\rangle\,(\widetilde u_n-\widetilde{u}) \,dx\right|\le \frac{C}{R}\,\left(2\,\lambda_{p,q}(\Omega)\right)^{\frac{p-1}{p}}\,\|\widetilde u_n-\widetilde{u}\|_{L^p(B_{2R})}.
\]
If we use \eqref{convpalle}, we get the claim. On the contrary, the second integral
\[
\left|\int_{\Omega}\widetilde{V}_n\,\widetilde u_n^{p-1}\,(\widetilde u_n-\widetilde{u})\,\eta\,dx\right|,
\]
is more delicate, due to the presence of the translation vectors $\{\mathbf{i}_n\}_{n\in\mathbb{N}}$, for whose norms we do not have any estimate. However, we can simply apply H\"older's inequality 
and get
\[
\left|\int_{\Omega}\widetilde{V}_n\,\widetilde u_n^{p-1}\,(\widetilde u_n-\widetilde{u})\,\eta\,dx\right|\le \left(\int_\Omega \widetilde{V}_n^\frac{p}{p-1}\,\widetilde{u}_n^p\,dx\right)^\frac{p-1}{p}\,\|\widetilde u_n-\widetilde{u}\|_{L^p(B_{2R})}.
\]
Observe that the first term on the right-hand side is uniformly bounded, thanks to \eqref{fortissima_traslata}. Thus, this term converges to $0$, as well. We can finally take the limit as $n$ goes to $\infty$, so to infer 
\[
\lim_{n\to\infty}\int_{\Omega} \langle|\nabla \widetilde u_n|^{p-2}\,\nabla \widetilde u_n, \nabla \widetilde u_n-\nabla \widetilde{u}\rangle\,\eta\,dx=0.
\]
On the other hand, by testing the previously inferred weak convergence of $\nabla \widetilde{u}_n$ to $\nabla \widetilde{u}$ against $|\nabla \widetilde{u}|^{p-2}\,\nabla \widetilde{u}\,\eta\in L^{p'}(\Omega)$, we obtain
\[
\lim_{n\to\infty}\int_{\Omega} \langle|\nabla \widetilde{u}|^{p-2}\,\nabla \widetilde{u}, \nabla \widetilde u_n-\nabla \widetilde{u}\rangle\,\eta\,dx=0.
\]
By subtracting the last two identities, we deduce that
\[
\lim_{n\to\infty}\int_{\Omega} \langle|\nabla \widetilde u_n|^{p-2}\,\nabla \widetilde u_n-|\nabla \widetilde{u}|^{p-2}\,\nabla \widetilde{u}, \nabla \widetilde u_n-\nabla \widetilde{u}\rangle\,\eta\,dx=0.
\]
By exploiting the monotonicity properties of the $p-$Laplacian (see for example the proof of \cite[Lemma B.1]{BPZ}), this information is enough to obtain that
\begin{equation}
\label{convpallegrad}
\lim_{n\to\infty}\|\nabla \widetilde u_n-\nabla \widetilde{u}\|_{L^{p}(B_R)}=\lim_{n\to\infty}\|\nabla \widetilde u_n-\nabla \widetilde{u}\|_{L^{p}(\Omega\cap B_R)}=0,\qquad \text{for every}\ R>0,
\end{equation}
as desired.
\par
We are now in a position to repeat {\it verbatim}  the last steps of the proof of \cite[Theorem 5.1]{BraBriPri}. Let us briefly recall them, for the reader's convenience. 
 The inferred convergences \eqref{convpalle} and \eqref{convpallegrad} allow to conclude that there exist a sub-sequence (not relabeled) $\{\widetilde{u}_n\}_{n\in\mathbb{N}}$ that converges to $\widetilde{u}$ almost everywhere in $\Omega$ and such that also the sequence $\{\nabla\widetilde{u}_n\}_{n\in\mathbb{N}}$  converges to $\nabla u$ almost everywhere in $\Omega$. Hence, the {\it Brezis-Lieb Lemma} (see \cite[Theorem 1]{BreLie}) applies to both the sequence of functions and that of gradients. We get 
\begin{equation}\label{eq:breliefun}
\lim_{n\to\infty} A_n:=\lim_{n\to\infty}\left(\int_\Omega |\widetilde{u}_n|^{q}\,dx-\int_{\Omega}| \widetilde{u}_n- \widetilde{u}|^{q}\,dx\right)=\int_{\Omega}|\widetilde{u}|^{q}\,dx,
\end{equation}
and
\begin{equation}
 \label{steiner1}
\lim_{n\to\infty}\int_{\Omega}|\nabla \widetilde{u}_n-\nabla \widetilde{u}|^{p}\,dx=\lambda_{p,q}(\Omega)-\int_{\Omega}|\nabla \widetilde{u}|^{p}\,dx,
\end{equation}
respectively, where in the second identity we used also that $\{\widetilde{u}_n\}_{n\in\mathbb{N}}$ is a minimizing sequence for $\lambda_{p,q}(\Omega)$. 
Being $\widetilde{u}\not\equiv 0$, we deduce from \eqref{eq:breliefun} that there exists $n_0\in\mathbb{N}$, such that $A_n$ is uniformly bounded from below by a positive constant, for every $n>n_0$. 
Then, a simple quantified version of the sub-additivity of the power function $t\mapsto t^{p/q}$ (see for example \cite[Lemma 2.1]{BraBriPri}) and the definition of $\lambda_{p,q}(\Omega)$ give us
\[
\begin{split}
\lambda_{p,q}(\Omega)&=\lambda_{p,q}(\Omega)\,\|\widetilde{u}_n\|_{L^q(\Omega)}^p\\
&\le \lambda_{p,q}(\Omega)\,A_n^\frac{p}{q}+\lambda_{p,q}(\Omega)\,\|\widetilde{u}_n-\widetilde{u}\|_{L^{q}(\Omega)}^p-c\,\lambda_{p,q}(\Omega)\,\min\left\{A_n^\frac{p}{q},\|\widetilde{u}_n-\widetilde{u}\|_{L^{q}(\Omega)}^p\right\}\\
&\le \lambda_{p,q}(\Omega)\,A_n^\frac{p}{q}+\int_{\Omega}|\nabla \widetilde{u}_n-\nabla \widetilde{u}|^{p}\,dx-c\,\lambda_{p,q}(\Omega)\,\min\left\{A_n^\frac{p}{q},\|\widetilde{u}_n-\widetilde{u}\|_{L^{q}(\Omega)}^p\right\},
\end{split}
\]
for some $c=c(p/q)>0$.
By taking the limit as $n$ goes to $\infty$ and applying  \eqref{eq:breliefun} and \eqref{steiner1}, we obtain
%\[
%\lambda_{p,q}(\Omega)\le \lambda_{p,q}(\Omega)\,\left(\int_\Omega |\widetilde{u}|^q\,dx\right)^\frac{p}{q}+\lambda_{p,q}(\Omega)-\int_{\Omega}|\nabla \widetilde{u}|^{p}\,dx-c\,\lambda_{p,q}(\Omega)\,\lim_{n\to\infty}\min\left\{A_n^\frac{p}{q},\|\widetilde{u}_n-\widetilde{u}\|_{L^{q}(\Omega)}^p\right\}
%\]
\[
\int_\Omega |\nabla \widetilde{u}|^p\,dx+ c\,\lambda_{p,q}(\Omega)\,\lim_{n\to\infty}\min\left\{A_n^\frac{p}{q},\|\widetilde{u}_n-\widetilde{u}\|_{L^{q}(\Omega)}^p\right\}\le \lambda_{p,q}(\Omega)\,\left(\int_\Omega |\widetilde{u}|^q\,dx\right)^\frac{p}{q}.
\]
On the other hand, by definition of $\lambda_{p,q}(\Omega)$, we have
\[
\int_\Omega |\nabla \widetilde{u}|^p\,dx\ge \lambda_{p,q}(\Omega)\,\left(\int_\Omega |\widetilde{u}|^q\,dx\right)^\frac{p}{q}.
\]
By joining the last two inequalities, we get in particular that
\[
\lim_{n\to\infty}\min\left\{A_n^\frac{p}{q},\|\widetilde{u}_n-\widetilde{u}\|_{L^{q}(\Omega)}^p\right\}=0.
\]
By recalling that $A_n$ is uniformly bounded from below by a positive constant, the latter implies that $\{\widetilde{u}_n\}_{n\in\mathbb{N}}$ converges strongly in $L^q(\Omega)$ to $\widetilde{u}$ and hence \eqref{normalimite}.
This concludes the proof of the existence.
\par
The fact that $\widetilde{u}\in L^\infty(\Omega)$ follows from \cite[Lemma 2.3]{BraBriPri}. We now claim that, up to translating $\widetilde{u}$, we can guarantee that 
\[
\|\widetilde{u}\|_{L^\infty(D_{\mathbf{t}}(Q_{1/2}\setminus K))}=\|\widetilde{u}\|_{L^\infty(\Omega)}.
\]
Indeed, by \cite[Theorem 7.3]{BraBriPri}, we know that $\widetilde{u}$ (exponentially) decays to $0$, at infinity. This shows that there exists $k\in\mathbb{N}\setminus\{0\}$ such that
\[
\|\widetilde{u}\|_{L^\infty(\Omega)}=\|\widetilde{u}\|_{L^\infty(\Omega_k)},
\]
where
\[
\Omega_k=\bigcup_{\mathbf{i}\in\mathbb{Z}^N_k}D_{\mathbf{t}}(\mathbf{i}+(\overline{Q_{1/2}}\setminus K))
\]
and $\mathbb{Z}^N_k$ is as in \eqref{Ztroncato}. Observe that $\Omega_k$ is a finite union of perforated hyper-rectangles and we have 
\[
\|\widetilde{u}\|_{L^\infty(\Omega)}=\|\widetilde{u}\|_{L^\infty(\Omega_k)}=\max_{\mathbf{i}\in\mathbb{Z}^N_k} \|\widetilde{u}\|_{L^\infty(D_{\mathbf{t}}(\mathbf{i}+(\overline{Q_{1/2}}\setminus K))}.
\] 
Thus, there exists $\mathbf{i}_0\in\mathbb{Z}^N_k$ such that
\[
\|\widetilde{u}\|_{L^\infty(\Omega)}=\|\widetilde{u}\|_{L^\infty(D_{\mathbf{t}}(\mathbf{i}_0+(\overline{Q_{1/2}}\setminus K))}.
\]
By finally defining $u(x)=\widetilde{u}(x+D_\mathbf{t}(\mathbf{i}_0))$, we get that $u$ is the claimed minimizer.
\end{proof}
As in \cite{BraBriPri}, for $p>N$ we can cover the endpoint case $q=\infty$, by a limiting argument.
\begin{teo}[The case $q=\infty$]
\label{teo:maininfty}
Let $N<p<\infty$ and let $\Omega\subseteq\mathbb{R}^N$ be an open set satisfying the assumptions of Theorem \ref{teo:main}.
For every $p<q<\infty$, let $u_q\in W^{1,p}_0(\Omega)$ be the extremal for $\lambda_{p,q}(\Omega)$ provided by Theorem \ref{teo:main}.
Then, the family $\{u_q\}_{q>p}$
is precompact in $W^{1,p}_0(\Omega)$ and every accumulation point is an extremal of $\lambda_{p,\infty}(\Omega)$.
\par
Moreover, if for some diverging sequence $\{q_n\}_{n\in\mathbb{N}}$ and a function $u_\infty\in W^{1,p}_0(\Omega)$, it holds  
\[
\lim_{n\to\infty}\|u_{q_n}-u_\infty\|_{W^{1,p}(\Omega)}=0,
\] 
then we also have 
\[
u_{q_n}^{q_n-1}\ \stackrel{n\to\infty}{\rightharpoonup}\ \delta_{z_\infty},\qquad \mbox{in}\ \mathscr{D}'(\Omega).
\]
Here  $z_\infty$ is the (unique) maximum point of $u_\infty$.
\end{teo}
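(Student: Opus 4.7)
The plan is a limiting argument $q\to\infty$, combining Morrey-type estimates with the localization of the $L^\infty$ norm provided by Theorem \ref{teo:main}. As a preliminary step, I would show that $\lim_{q\to\infty}\lambda_{p,q}(\Omega)=\lambda_{p,\infty}(\Omega)$: the upper bound is obtained by testing with a fixed $\varphi\in C^\infty_0(\Omega)$ and using that $\|\varphi\|_{L^q}\to \|\varphi\|_{L^\infty}$, while the lower bound follows by combining the H\"older interpolation
\[
\|\varphi\|_{L^q(\Omega)}\le \|\varphi\|_{L^p(\Omega)}^{p/q}\,\|\varphi\|_{L^\infty(\Omega)}^{(q-p)/q}
\]
with the Poincar\'e inequality (available since $\lambda_p(\Omega)>0$ by Lemma \ref{lm:nonbanale}), yielding $\lambda_{p,q}(\Omega)\ge \lambda_p(\Omega)^{p/q}\,\lambda_{p,\infty}(\Omega)^{(q-p)/q}$. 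Since $\|\nabla u_q\|_{L^p}^p=\lambda_{p,q}(\Omega)$, this already gives a uniform $W^{1,p}_0(\Omega)$ bound on $\{u_q\}_{q>p}$ for diverging $q$.

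Fix a diverging sequence $\{q_n\}$ and extract a weak limit $u_{q_n}\rightharpoonup u_\infty$ in $W^{1,p}_0(\Omega)$. Since $p>N$, Morrey's embedding gives uniform $C^{0,\alpha}$ bounds and, via Arzel\`a--Ascoli, uniform convergence $u_{q_n}\to u_\infty$ on the compact central cell $\overline{D_{\mathbf{t}}(Q_{1/2}\setminus K)}$. The decisive use of Theorem \ref{teo:main} is that $\|u_{q_n}\|_{L^\infty(\Omega)}=\|u_{q_n}\|_{L^\infty(D_{\mathbf{t}}(Q_{1/2}\setminus K))}$, so the uniform convergence on this cell determines the $L^\infty(\Omega)$-norm in the limit. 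Applying the same H\"older interpolation to $u_{q_n}$ itself (which has unit $L^{q_n}$ norm and a uniformly bounded $L^p$ norm by the Poincar\'e inequality) forces $\liminf_n \|u_{q_n}\|_{L^\infty(\Omega)}\ge 1$, and hence $\|u_\infty\|_{L^\infty(\Omega)}\ge 1$. Pairing this with weak lower semicontinuity,
\[
\lambda_{p,\infty}(\Omega)\,\|u_\infty\|_{L^\infty}^p\le \int_\Omega|\nabla u_\infty|^p\,dx\le \liminf_n \int_\Omega|\nabla u_{q_n}|^p\,dx=\lim_n\lambda_{p,q_n}(\Omega)=\lambda_{p,\infty}(\Omega),
\]
forces $\|u_\infty\|_{L^\infty(\Omega)}=1$, so $u_\infty$ is an extremal for $\lambda_{p,\infty}(\Omega)$. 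Moreover, equality throughout the chain yields $\|\nabla u_{q_n}\|_{L^p}\to \|\nabla u_\infty\|_{L^p}$; since $(W^{1,p}_0(\Omega),\|\nabla \cdot\|_{L^p})$ is uniformly convex, weak convergence together with norm convergence upgrades to strong convergence, establishing the precompactness assertion.

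For the distributional convergence, I would start from the Euler--Lagrange equation $-\Delta_p u_{q_n}=\lambda_{p,q_n}(\Omega)\,u_{q_n}^{q_n-1}$ weakly in $\Omega$. Strong $W^{1,p}$ convergence implies $|\nabla u_{q_n}|^{p-2}\nabla u_{q_n}\to |\nabla u_\infty|^{p-2}\nabla u_\infty$ in $L^{p'}(\Omega)$, so $-\Delta_p u_{q_n}\to -\Delta_p u_\infty$ in $\mathscr{D}'(\Omega)$. Dividing by the positive factor $\lambda_{p,q_n}(\Omega)\to \lambda_{p,\infty}(\Omega)$, we get $u_{q_n}^{q_n-1}\rightharpoonup \lambda_{p,\infty}(\Omega)^{-1}(-\Delta_p u_\infty)$ in $\mathscr{D}'(\Omega)$. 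To identify this limit with $\delta_{z_\infty}$, I would invoke the Euler--Lagrange characterization of Morrey-type extremals from \cite{BraBriPri}, according to which an extremal of $\lambda_{p,\infty}(\Omega)$ with unit sup-norm admits a unique maximum point $z_\infty$ and satisfies $-\Delta_p u_\infty=\lambda_{p,\infty}(\Omega)\,\delta_{z_\infty}$ in the sense of distributions.

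The most delicate step is the last one: the uniqueness of the maximum of $u_\infty$ and the identification of the concentration measure as a Dirac mass at a single point are not a priori consequences of extremality, but rest on the nodal/regularity theory of $p$-Laplacian Morrey-type extremals and must be imported from \cite{BraBriPri}. By contrast, once strong $W^{1,p}$ convergence has been secured via the norm-matching argument above, passing to the limit in the non-linear $p$-Laplacian term is essentially routine.
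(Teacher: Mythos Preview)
Your proposal is correct and follows essentially the same route as the paper: both use the convergence $\lambda_{p,q}(\Omega)\to\lambda_{p,\infty}(\Omega)$, Morrey equicontinuity plus Ascoli--Arzel\`a on the central cell, the localization $\|u_q\|_{L^\infty(\Omega)}=\|u_q\|_{L^\infty(D_{\mathbf t}(Q_{1/2}\setminus K))}$ from Theorem \ref{teo:main}, and the interpolation/Poincar\'e lower bound on $\|u_q\|_{L^\infty}$ to force $\|u_\infty\|_{L^\infty}=1$ and norm convergence, then uniform convexity for strong convergence; the distributional part is likewise handled in the paper by passing to the limit in the Euler--Lagrange equation and invoking \cite[Theorem 5.2 and Lemmas A.1, A.2]{BraBriPri} exactly as you suggest.
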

\begin{proof} 
The proof goes along the lines of \cite[Theorem 5.2]{BraBriPri} with some suitable changes. 
As above, we first observe that $\lambda_p(\Omega)>0$, thanks to Lemma \ref{lm:nonbanale}. Again by \eqref{mara}, we obtain that $\lambda_{p,\infty}(\Omega)>0$, as well. On account of Lemma \ref{lm:inradius} we have $r_{\Omega}<+\infty$: this guarantees that $\{u_q\}_{q>p}$ is a bounded family in $W^{1,p}_0(\Omega)$, as in the proof of \cite[Theorem 5.2]{BraBriPri}. Moreover, we recall that
\begin{equation}\label{preliminare}
\lim_{q\to\infty}\lambda_{p,q}(\Omega)=\lambda_{p,\infty}(\Omega).
\end{equation}
see for instance \cite[Corollary 6.2]{BraPriZag}.
\par
By repeating exactly the same arguments as in \cite[Theorem 5.2]{BraBriPri}, we get that the family $\{u_q\}_{q>p}$ is bounded in $W^{1,p}_0(\Omega)$ and both equicontinuous and bounded in $C^0_{\rm b}(\overline\Omega)$. The latter is the Banach space of continuous and bounded functions over $\overline\Omega$, endowed with the sup norm. 
Thanks to the properties of $u_q$, we know that for every $q>p$ 
\[
\|u_q\|_{L^\infty(D_{\mathbf{t}}(Q_{1/2}\setminus K))}=\|u_q\|_{L^{\infty}(\Omega)}.
\]
Observe that
\[
\int_\Omega |\nabla u_q|^p\,dx=\lambda_{p,q}(\Omega)\,\int_\Omega |u_q|^q\,dx\le \lambda_{p,q}(\Omega)\,\|u_q\|_{L^\infty(\Omega)}^{q-p}\,\int_\Omega |u_q|^p\,dx, \qquad \text{for every}\ q >p.
\]
Thus, by applying the Poincar\'e inequality on the left-hand side, we get in particular
\begin{equation}\label{boundnorma0}
\|u_q\|_{L^\infty(D_{\mathbf{t}}(Q_{1/2}\setminus K))} \geq  \left(\frac{\lambda_{p}(\Omega)}{\lambda_{p,q}(\Omega)}\right)^\frac{1}{q-p}.
\end{equation}
We show that $\{u_q\}_{q>p}$ is precompact in $W^{1,p}_0(\Omega)$,  in the norm topology. Indeed, let $\{q_n\}_{n\in\mathbb{N}}\subseteq (p,+\infty)$ be any sequence diverging to $+\infty$.
By applying the Ascoli-Arzel\`a Theorem and the reflexivity of the space  $W^{1,p}_0(\Omega)$, we have that there exists a subsequence of $\{q_n\}_{n\in\mathbb{N}}$ (not relabelled) and a function $u_{\infty}\in W^{1,p}_0(\Omega)\cap C^0(D_{\mathbf{t}}(\overline{Q_{1/2}}))$  such that  $\{u_{q_n}\}_{n\in\mathbb{N}}$  weakly converges to $u_{\infty}$ in $W^{1,p}_0(\Omega)$ and uniformly on  $D_{\mathbf{t}}(\overline{Q_{1/2}})$. 
\par
Thanks to \eqref{boundnorma0},  the uniform convergence of  $\{u_{q_n}\}_{n\in\mathbb{N}}$ on $D_{\mathbf{t}}(\overline{Q_{1/2}})$ and to \eqref{preliminare}, we get that 
 \begin{equation}\label{boundnorma}
\|u_{\infty}\|_{L^\infty(\Omega)}\geq \|u_\infty\|_{L^\infty(D_{\mathbf{t}}(Q_{1/2}\setminus K))}\geq 1.
\end{equation}
On the other hand, by using the definition of $\lambda_{p,\infty}(\Omega)$ and the weak convergence in $W^{1,p}(\Omega)$ of the sequence $\{u_{q_n}\}_{n\in\mathbb{N}}$, we get
\begin{equation}
\label{boundgrad2}
\lambda_{p,\infty}(\Omega)\,\|u_\infty\|^{p}_{L^{\infty}(\Omega)}\le \int_{\Omega}|\nabla u_\infty|^p\,dx\le \lim_{n\to \infty}\int_{\Omega}|\nabla u_{q_n}|^p\,dx=\lambda_{p,\infty}(\Omega).
\end{equation}
where we have again used \eqref{preliminare}.
By combining \eqref{boundnorma} and  \eqref{boundgrad2},  we deduce
\[
\|u_\infty\|_{L^{\infty}(\Omega)}= 1 \qquad\text{and}\qquad  \lim_{n\to\infty}\|\nabla u_{q_n}\|^p_{L^{p}(\Omega)}= \int_{\Omega}|\nabla u_{\infty}|^p\,dx=\lambda_{p,\infty}(\Omega).
\]
Then $u_\infty$ is an extremal for $\lambda_{p,\infty}(\Omega)$ and, thanks to the weak convergence of $\{\nabla u_{q_n}\}_{n\in\mathbb{N}}$ to $\nabla u_{\infty}$ in $L^p(\Omega)$, we  get  
\[
\lim_{n\to \infty}\|\nabla u_{q_n}-\nabla u_{\infty}\|_{L^{p}(\Omega)}=0.
\]
This ensures the claimed convergence in $W^{1,p}_0(\Omega)$. Notice that repeating these arguments it is also easy to show that any accumulation point of $\{u_q\}_{q>p}$ is in fact an extremal of $\lambda_{p,\infty}(\Omega)$.
\par
Finally, the last part of the theorem follows as in \cite[Theorem 5.2]{BraBriPri}. We just recall that the existence and uniqueness of the point $z_\infty$ follows from \cite[Lemma A.1 \& A.2]{BraBriPri}.
\end{proof}

\begin{rem}
We remark that, for $p>N$, we have 
\[
\mathrm{cap}_p(K;Q_1)>0 \qquad \Longleftrightarrow\qquad K\not=\emptyset.
\]
In particular, we can thus take $K=\{0\}$.
Then, the previous results assure in particular that we have existence of an extremal for the ``pepper set'' $\mathbb{R}^N\setminus\mathbb{Z}^N$, i.e.
\[
\lambda_{p,q}(\mathbb{R}^N\setminus \mathbb{Z}^N),
\]
is attained in $W^{1,p}_0(\mathbb{R}^N\setminus\mathbb{Z}^N)$,
for every $N<p<q\le \infty$.
\end{rem}

\section{Existence of extremals: more general sets}
\label{sec:7}
\begin{figure}
\includegraphics[scale=.25]{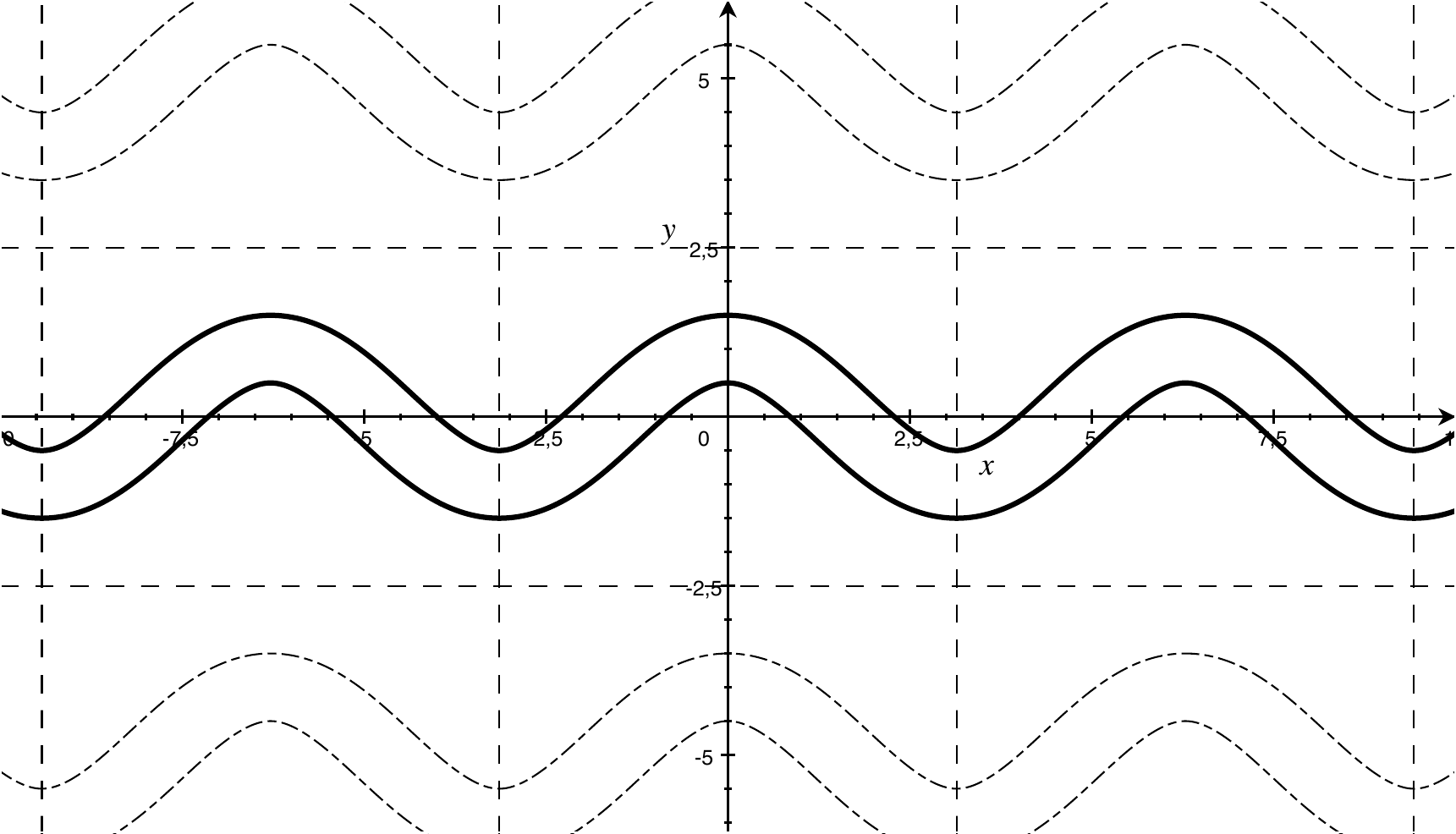}
\caption{In bold line, an example of a two-dimensional open set which is periodic in the first coordinate direction and bounded in the second one. We can extend it periodically in the vertical direction, by adding translated copies of the original set: the new set falls into the realm of Theorem \ref{teo:maininfty}. The rectangles in dashed line represent the periodicity cells.}
\label{fig:bachi}
\end{figure}

We now show how to apply the main result of the paper, in order to get existence of extremals for a more general class of sets: these are bounded in some directions and periodic in the others.
\begin{teo}
\label{teo:pupazzo}
Let $N\ge 2$ and $k\in\{1,\dots,N-1\}$. Let $\Omega\subseteq\mathbb{R}^N$ be an open set with the following properties:
\begin{itemize}
\item[(P)] there exist $t_1,\dots,t_k$ positive numbers such that 
\[
\Omega+t_i\,\mathbf{e}_i=\Omega,\qquad \text{for every}\ i\in\{1,\dots,k\};
\]
\item[(B)] there exists $a>0$ such that 
\[
\Omega\subseteq \mathbb{R}^k\times\left(-\frac{a}{2},\frac{a}{2}\right)^{N-k}.
\]
\end{itemize}
Then, for every $1<p<\infty$ and $q>p$ satisfying \eqref{exponents}, there exists an extremal for $\lambda_{p,q}(\Omega)$.
\end{teo}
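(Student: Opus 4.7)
The plan is to realize $\Omega$ as a piece of a $\mathbf{t}$-periodically perforated open set $\widetilde\Omega \subseteq \mathbb{R}^N$ (in the sense of Definition \ref{defi:apps_intro}), apply our Main Theorem to $\widetilde\Omega$, and then peel off a single component by invoking Lemma \ref{lm:disgiunti}.

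Concretely, I would fix any $b > a$ and set
\[
\widetilde\Omega := \bigcup_{\mathbf{j} \in \mathbb{Z}^{N-k}} \Omega_{\mathbf{j}}, \qquad \Omega_{\mathbf{j}} := \Omega + b \sum_{l=1}^{N-k} j_l\, \mathbf{e}_{k+l}.
\]
By assumption (B), each $\Omega_\mathbf{j}$ lives inside the slab $\mathbb{R}^k \times \prod_{l}(b j_l - a/2,\, b j_l + a/2)$, and the choice $b > a$ makes these slabs pairwise disjoint; hence $\widetilde\Omega = \bigsqcup_{\mathbf{j}} \Omega_{\mathbf{j}}$ is a \emph{disjoint} union of translated copies of $\Omega$. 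Combining this with (P), $\widetilde\Omega$ is periodic with periods $\mathbf{t} := (t_1, \dots, t_k, b, \dots, b)$. By the $\mathbf{t}$-periodicity of $\widetilde\Omega^c$ and the tiling property of the closed cells $\{D_{\mathbf{t}}(\mathbf{i} + \overline{Q_{1/2}})\}_{\mathbf{i} \in \mathbb{Z}^N}$, the compact set
\[
K := D_{\mathbf{t}^{-1}}\Big(\overline{D_{\mathbf{t}}(Q_{1/2})} \setminus \widetilde\Omega\Big) \subseteq \overline{Q_{1/2}}
\]
generates $\widetilde\Omega$ in the sense of Definition \ref{defi:apps_intro}. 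The inclusion $K \subsetneq \overline{Q_{1/2}}$ is strict because $\Omega$ is nonempty, and $K$ is nonempty because $b > a$ ensures that the inter-slab region inside $D_{\mathbf{t}}(Q_{1/2})$, which has positive Lebesgue measure, lies in $\widetilde\Omega^c$. For $1 < p \leq N$, positive measure implies positive $p$-capacity by the standard Maz'ya estimate, while for $p > N$ any nonempty compact set has positive $p$-capacity, so $\mathrm{cap}_p(K; Q_1) > 0$ in all the admissible cases for $p$.

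With $\widetilde\Omega$ in the scope of the Main Theorem, I obtain an extremal $\widetilde u \in W^{1,p}_0(\widetilde\Omega) \cap L^\infty(\widetilde\Omega) \setminus \{0\}$, normalized so that $\|\widetilde u\|_{L^q(\widetilde\Omega)} = 1$. Since $q > p$ and the pieces $\Omega_{\mathbf{j}}$ are pairwise disjoint open sets, Lemma \ref{lm:disgiunti} applies and singles out an index $\mathbf{j}_0$ such that $\|\widetilde u\|_{L^q(\Omega_{\mathbf{j}_0})} = 1$ and $\widetilde u$ vanishes $L^q$-a.e.\ on every other piece. The restriction $v := \widetilde u|_{\Omega_{\mathbf{j}_0}}$ then belongs to $W^{1,p}_0(\Omega_{\mathbf{j}_0})$, and the sandwich
\[
\int_{\Omega_{\mathbf{j}_0}} |\nabla v|^p\, dx = \lambda_{p,q}(\widetilde\Omega) \leq \lambda_{p,q}(\Omega_{\mathbf{j}_0}) \leq \int_{\Omega_{\mathbf{j}_0}} |\nabla v|^p\, dx,
\]
whose first inequality is monotonicity of $\lambda_{p,q}$ under inclusion and whose second is the definition of $\lambda_{p,q}(\Omega_{\mathbf{j}_0})$, forces $v$ to be an extremal for $\lambda_{p,q}(\Omega_{\mathbf{j}_0}) = \lambda_{p,q}(\Omega)$. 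A final rigid translation by $-b \sum_l (j_0)_l\, \mathbf{e}_{k+l}$ produces the desired extremal on $\Omega$.

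I do not expect any genuinely deep obstacle here: once the construction is in place, the argument is a bare reduction to the Main Theorem. The only point that requires real care is the second step, where one must verify cleanly that $\widetilde\Omega^c$ really equals $\bigcup_{\mathbf{i}} D_{\mathbf{t}}(\mathbf{i} + K)$ and keep track of the closed-versus-open boundary identifications between adjacent cells. Everything else, including the capacity lower bound and the extraction of a single-component extremal, is essentially routine given Lemma \ref{lm:disgiunti} and the machinery already developed.
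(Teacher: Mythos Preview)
Your proposal is correct and follows essentially the same strategy as the paper's own proof: extend $\Omega$ periodically in the remaining $N-k$ directions to obtain a disjoint union of translated copies, identify this as a $\mathbf{t}$-periodically perforated set whose generating compact set has positive measure (hence positive $p$-capacity), apply the Main Theorem, and then invoke Lemma \ref{lm:disgiunti} to localize the extremal to a single copy. The only cosmetic difference is that the paper fixes the new periods to be $2a$ rather than an arbitrary $b>a$, and it carries out explicitly (via a separate technical lemma) the boundary bookkeeping you flag in your last paragraph.
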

\begin{proof}
The strategy of the proof is quite simple, it is better to declare it from the very beginning, in order to assist the reader:
\begin{itemize}
\item we periodically extend the original set $\Omega$ (see Figure \ref{fig:bachi});
\vskip.2cm
\item this new set $\widetilde{\Omega}$ is made of countably many disjoint sets $\Omega_i$ and we can apply Theorems \ref{teo:main} and \ref{teo:maininfty} to $\widetilde\Omega$;
\vskip.2cm
\item we finally appeal to Lemma \ref{lm:disgiunti} and to the fact that each component $\Omega_i$ is just a translated copy of $\Omega$. 
\end{itemize}
We then define the new open set
\[
\widetilde{\Omega}=\bigcup_{i=k+1}^{N} \bigcup_{m\in\mathbb{Z}}\Omega_{i,m},\qquad \mbox{where}\ \Omega_{i,m}=\Omega+2\,m\,a\,\mathbf{e}_i.
\]
This is periodic in every direction $\{\mathbf{e}_1,\dots,\mathbf{e}_N\}$, by construction. More precisely, we have 
\[
\widetilde{\Omega}+t_i\,\mathbf{e}_i=\widetilde\Omega,\qquad \text{for every}\ i\in\{1,\dots,k\},
\]
and
\[
\widetilde{\Omega}+2\,a\,\mathbf{e}_i=\widetilde\Omega,\qquad \text{for every}\ i\in\{k+1,\dots,N\}.
\]
Observe that the translated copies $\Omega_{i,m}$ have been taken in such a way to leave ``enough space'' between each of them. We do this in order to be sure to have enough complement for $\widetilde{\Omega}$: we will be more clear in a moment.
\par
We are going to show that $\widetilde{\Omega}$ fits into the assumptions of Theorems \ref{teo:main} and \ref{teo:maininfty}: in light of what we said above, this will be sufficient to conclude. To this aim, let us define
\[
\overline{\mathbf{t}}=(\overline{t}_1,\dots,\overline{t}_N):=(t_1,\dots,t_k,2a,\dots,2a),
\]
and set
\[
R_{\overline{\mathbf{t}}}=\prod_{i=1}^N\left[-\frac{\overline{t}_i}{2},\frac{\overline{t}_i}{2}\right],\qquad K=D_{\overline{\mathbf{t}}}^{-1}\left(R_{\overline{\mathbf{t}}}\setminus\Omega\right).
\]
By construction, the set $K$ is a compact subset of $\overline{Q_{1/2}}$. Moreover, we have 
\begin{equation}
\label{bloblo}
\mathrm{cap}_p(K;Q_1)>0.
\end{equation}
This can be seen as follows: observe that
\[
\begin{split}
K&\supseteq D_{\overline{\mathbf{t}}}^{-1}\left(\prod_{i=1}^k\left[-\frac{t_i}{2},\frac{t_i}{2}\right]\times \left[\frac{a}{2},a\right]^{N-k}\right)=\left[-\frac{1}{2},\frac{1}{2}\right]^k\times \left[\frac{1}{4},\frac{1}{2}\right]^{N-k}.
\end{split}
\]
The latter obviously has positive $N-$dimensional Lebesgue measure, thus $|K|>0$, as well. Then \eqref{bloblo} follows from the basic inequality
\[
\mathrm{cap}_p(K;Q_1)\ge \lambda_p(Q_1)\,|K|,
\] 
see \cite[Corollary 2.3.4]{Maz}. 
\par
We claim that $\widetilde{\Omega}$ is a $\overline{\mathbf{t}}-$periodically perforated set, generated by $K$ above: we need to show that
\begin{equation}
\label{uguali}
\widetilde{\Omega}=\mathbb{R}^N\setminus\left(\bigcup_{\mathbf{i}\in\mathbb{Z}^N}D_{\overline{\mathbf{t}}}(\mathbf{i}+K)\right).
\end{equation}
By recalling the definition of $K$, we have
\[
D_{\overline{\mathbf{t}}}(\mathbf{i}+K)=D_{\overline{\mathbf{t}}}(\mathbf{i})+(R_{\overline{\mathbf{t}}}\setminus\Omega).
\]
We also observe that 
\[
\mathbb{R}^N=\bigcup_{\mathbf{i}\in\mathbb{Z}^N}D_{\overline{\mathbf{t}}}\left(\mathbf{i}+\overline{Q_{1/2}}\right)=\bigcup_{\mathbf{i}\in\mathbb{Z}^N}\left(D_{\overline{\mathbf{t}}}(\mathbf{i})+R_{\overline{\mathbf{t}}}\right).
\]
Thus, by using Lemma \ref{lm:tennico} below, we can infer that
\[
\begin{split}
\mathbb{R}^N\setminus\left(\bigcup_{\mathbf{i}\in\mathbb{Z}^N}D_{\overline{\mathbf{t}}}(\mathbf{i}+K)\right)&=\bigcup_{\mathbf{i}\in\mathbb{Z}^N}\left(D_{\overline{\mathbf{t}}}(\mathbf{i})+R_{\overline{\mathbf{t}}}\right)\setminus \bigcup_{\mathbf{i}\in\mathbb{Z}^N}\left(D_{\overline{\mathbf{t}}}(\mathbf{i})+R_{\overline{\mathbf{t}}}\setminus\Omega\right)\\
&=\bigcup_{\mathbf{i}\in\mathbb{Z}^N}\left(D_{\overline{\mathbf{t}}}(\mathbf{i})+\left(\Omega\cap R_{\overline{\mathbf{t}}}\right) \right)\\
&=\bigcup_{\mathbf{i}\in\mathbb{Z}^N}\left((D_{\overline{\mathbf{t}}}(\mathbf{i})+\Omega)\cap \left(D_{\overline{\mathbf{t}}}(\mathbf{i})+R_{\overline{\mathbf{t}}}\right) \right).
\end{split}
\]
We write the last union as follows
\[
\begin{split}
\bigcup_{\mathbf{i}\in\mathbb{Z}^N}&\left((D_{\overline{\mathbf{t}}}(\mathbf{i})+\Omega)\cap \left(D_{\overline{\mathbf{t}}}(\mathbf{i})+R_{\overline{\mathbf{t}}}\right) \right)\\
&=\bigcup_{(i_{k+1},\dots,i_N)\in\mathbb{Z}^{N-k}}\bigcup_{\mathbf{i}'\in\mathbb{Z}^k}\left((D_{\overline{\mathbf{t}}}(\mathbf{i}',i_{k+1},\dots,i_N)+\Omega)\cap \left(D_{\overline{\mathbf{t}}}(\mathbf{i}',i_{k+1},\dots,i_N)+R_{\overline{\mathbf{t}}}\right) \right).
\end{split}
\]
Thanks to the periodicity assumption (P) and recalling the definition of $\overline{\mathbf{t}}$, we have 
\[
D_{\overline{\mathbf{t}}}(\mathbf{i}',i_{k+1},\dots,i_N)+\Omega=\Omega+\sum_{j=1}^k t_j\,i_j\,\mathbf{e}_j +\sum_{j=k+1}^{N} 2\,a\,i_j\,\mathbf{e}_j=\Omega+\sum_{j=k+1}^{N} 2\,a\,i_j\,\mathbf{e}_j.
\]
We thus obtain
\[
\begin{split}
\mathbb{R}^N&\setminus\left(\bigcup_{\mathbf{i}\in\mathbb{Z}^N}D_{\overline{\mathbf{t}}}(\mathbf{i}+K)\right)\\
&=\bigcup_{(i_{k+1},\dots,i_N)\in\mathbb{Z}^{N-k}}\bigcup_{\mathbf{i}'\in\mathbb{Z}^k}\left(\left(\Omega+\sum_{j=k+1}^{N} 2\,a\,i_j\,\mathbf{e}_j\right)\cap \left(D_{\overline{\mathbf{t}}}(\mathbf{i}',i_{k+1},\dots,i_N)+R_{\overline{\mathbf{t}}}\right) \right)\\
&=\bigcup_{(i_{k+1},\dots,i_N)\in\mathbb{Z}^{N-k}}\left(\Omega+\sum_{j=k+1}^{N} 2\,a\,i_j\,\mathbf{e}_j\right)\cap\left(\bigcup_{\mathbf{i}'\in\mathbb{Z}^k}\left(D_{\overline{\mathbf{t}}}(\mathbf{i}',i_{k+1},\dots,i_N)+R_{\overline{\mathbf{t}}}\right) \right)
\end{split}
\]
Observe that 
\[
\begin{split}
\bigcup_{\mathbf{i}'\in\mathbb{Z}^k}\biggl(
D_{\overline{\mathbf{t}}}(\mathbf{i}',i_{k+1},\dots,i_N)\bigl.&\bigl.+R_{\overline{\mathbf{t}}}
\biggr)=\mathbb{R}^k\times\prod_{j=k+1}^N\left[-a+2\,a\,i_{j},a+2\,a\,i_j\right]\\
&=\left(\mathbb{R}^k\times[- a, a]^{N-k}\right)+\sum_{j=k+1}^{N} 2\,a\,i_j\,\mathbf{e}_j,
\end{split}
\]
and thus by assumption (B) we get
\[
\begin{split}
\left(\Omega+\sum_{j=k+1}^{N} 2\,a\,i_j\,\mathbf{e}_j\right)&\cap\left(\bigcup_{\mathbf{i}'\in\mathbb{Z}^k}\left(D_{\overline{\mathbf{t}}}(\mathbf{i}',i_{k+1},\dots,i_N)+R_{\overline{\mathbf{t}}}\right) \right)=\Omega+\sum_{j=k+1}^{N} 2\,a\,i_j\,\mathbf{e}_j.
\end{split}
\]
In conclusion, we obtain 
\[
\begin{split}
\mathbb{R}^N\setminus\left(\bigcup_{\mathbf{i}\in\mathbb{Z}^N}D_{\overline{\mathbf{t}}}(\mathbf{i}+K)\right)&=\bigcup_{(i_{k+1},\dots,i_N)\in\mathbb{Z}^{N-k}}\left(\Omega+\sum_{j=k+1}^{N} 2\,a\,i_j\,\mathbf{e}_j\right)\\
&=\bigcup_{j=k+1}^N\bigcup_{m\in\mathbb{Z}} (\Omega+2\,a\,m\,\mathbf{e}_j).
\end{split}
\]
The latter is the definition of $\widetilde\Omega$, thus we established \eqref{uguali}. The proof of the existence of an extremal for $\lambda_{p,q}(\Omega)$ now follows the lines presented at the beginning.
\end{proof}
\begin{lm}
\label{lm:tennico}
With the previous notation, we have that
\[
\begin{split}
\bigcup_{\mathbf{i}\in\mathbb{Z}^N}\left(D_{\overline{\mathbf{t}}}(\mathbf{i})+R_{\overline{\mathbf{t}}}\right)\setminus \bigcup_{\mathbf{i}\in\mathbb{Z}^N}\left(D_{\overline{\mathbf{t}}}(\mathbf{i})+R_{\overline{\mathbf{t}}}\setminus\Omega\right)=\bigcup_{\mathbf{i}\in\mathbb{Z}^N}\left(D_{\overline{\mathbf{t}}}(\mathbf{i})+\left(\Omega\cap R_{\overline{\mathbf{t}}} \right) \right).
\end{split}
\]
\end{lm}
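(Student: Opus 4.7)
The plan is to prove the set identity by double inclusion, treating the two directions separately. The inclusion from left to right is essentially immediate, while the reverse inclusion is the delicate part, because an element of the tiled union may admit several representations on the shared boundaries of the shifted closed rectangles $D_{\overline{\mathbf{t}}}(\mathbf{i}) + R_{\overline{\mathbf{t}}}$, and one must ensure that these representations are mutually consistent regarding membership in $\Omega$.

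For the inclusion $\subseteq$, I would pick $x$ in the left-hand side. By membership in the first union, write $x = D_{\overline{\mathbf{t}}}(\mathbf{i}) + y$ for some $\mathbf{i}\in\mathbb{Z}^N$ and $y\in R_{\overline{\mathbf{t}}}$. If it were the case that $y\notin\Omega$, then $y\in R_{\overline{\mathbf{t}}}\setminus\Omega$, so $x$ would lie in the second union, contradicting the hypothesis. Hence $y\in\Omega\cap R_{\overline{\mathbf{t}}}$, and $x$ belongs to the right-hand side.

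For the reverse inclusion $\supseteq$, I would pick $x = D_{\overline{\mathbf{t}}}(\mathbf{i}) + y$ with $y\in \Omega\cap R_{\overline{\mathbf{t}}}$. Membership in $\bigcup_{\mathbf{i}}\left(D_{\overline{\mathbf{t}}}(\mathbf{i})+R_{\overline{\mathbf{t}}}\right)$ is obvious, so what remains is to rule out $x\in \bigcup_{\mathbf{j}}\left(D_{\overline{\mathbf{t}}}(\mathbf{j})+R_{\overline{\mathbf{t}}}\setminus\Omega\right)$. Assume for contradiction that $x = D_{\overline{\mathbf{t}}}(\mathbf{j}) + z$ for some $\mathbf{j}\in\mathbb{Z}^N$ and some $z\in R_{\overline{\mathbf{t}}}\setminus\Omega$. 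Setting $\mathbf{m}=\mathbf{j}-\mathbf{i}$, this gives $y - z = D_{\overline{\mathbf{t}}}(\mathbf{m})$. Since $y,z\in R_{\overline{\mathbf{t}}}$, comparing coordinates yields $|m_l|\le 1$ for every $l$. For an index $l > k$, assumption (B) forces $y_l\in(-a/2, a/2)$, so $z_l = y_l - 2\,a\,m_l$ would lie outside $[-a, a]$ unless $m_l = 0$; hence $m_l = 0$ for all $l > k$. Consequently $y - z = \sum_{l=1}^{k} m_l\, t_l\, \mathbf{e}_l$, and the periodicity assumption (P), applied iteratively in the first $k$ coordinate directions, yields $z = y - \sum_{l=1}^{k} m_l\, t_l\, \mathbf{e}_l \in \Omega$, contradicting $z\in R_{\overline{\mathbf{t}}}\setminus\Omega$.

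The main obstacle will be precisely the boundary-overlap issue and its resolution via the coordinate splitting: (B) is what kills nonzero shifts in the ``bounded'' directions $l > k$, while (P) is what transports membership in $\Omega$ along shifts in the ``periodic'' directions $l \le k$. Neither assumption alone would suffice, but together they reduce the multi-valued representation problem to a trivial one.
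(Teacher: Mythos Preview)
Your proof is correct and follows essentially the same strategy as the paper: both establish $\subseteq$ by the trivial inclusion $\bigcup_i A_i \setminus \bigcup_i A_i' \subseteq \bigcup_i (A_i\setminus A_i')$, and both handle $\supseteq$ by contradiction, using assumption (B) to force $m_l=0$ for $l>k$ and then assumption (P) to transport membership in $\Omega$ along the remaining shift. Your version is in fact slightly more streamlined, since you obtain $|m_l|\le 1$ directly from $y,z\in R_{\overline{\mathbf{t}}}$ rather than first arguing that the two rectangles must be adjacent with $y$ on their common boundary.
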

\begin{proof}
The inclusion $\subseteq $ is easier, it is sufficient to use the following general fact: for families of sets $\{A_i'\}_{i\in\mathbb{N}}$ and $\{A_i\}_{i\in\mathbb{N}}$, we have 
% are such that $A_i'\subseteq A_i$ for every $i\in\mathbb{N}$, 
\[
\bigcup_{i\in\mathbb{N}} A_i\setminus \bigcup_{i\in\mathbb{N}} A_i'\subseteq\bigcup_{i\in\mathbb{N}} (A_i\setminus A_i').
\]
In order to prove the reverse inclusion $\supseteq $,  we take 
\[
y\in D_{\overline{\mathbf{t}}}(\mathbf{i})+\left(\Omega\cap R_{\overline{\mathbf{t}}} \right),\qquad \text{for some}\ \mathbf{i}\in\mathbb{Z}^N, 
\]
and we wish to prove that
\[
y\not\in  D_{\overline{\mathbf{t}}}(\mathbf{j})+(R_{\overline{\mathbf{t}}}\setminus\Omega),\qquad \text{for every}\, \mathbf{j}\in \mathbb Z^N.
\]
We argue by contradiction and suppose that there exists $\mathbf{j}\in\mathbb{Z}^N$ such that
\[
y\in  D_{\overline{\mathbf{t}}}(\mathbf{j})+(R_{\overline{\mathbf{t}}}\setminus\Omega).
\]
Of course, we have $\mathbf{j}\not=\mathbf{i}$. By observing that the interiors of the two hyper-rectangles $D_{\overline{\mathbf{t}}}(\mathbf{i})+R_{\overline{\mathbf{t}}}$ and $D_{\overline{\mathbf{t}}}(\mathbf{j})+R_{\overline{\mathbf{t}}}$ are disjoint and only their boundaries can intersect, it must result that:
\begin{itemize}
\item $|\mathbf{i}-\mathbf{j}|_{\ell^\infty}=1$, i.e. the two hyper-rectangles are adjacent;
\vskip.2cm
\item $y\in (D_{\overline{\mathbf{t}}}(\mathbf{i})+\partial R_{\overline{\mathbf{t}}})\cap (D_{\overline{\mathbf{t}}}(\mathbf{j})+\partial R_{\overline{\mathbf{t}}})$.
\end{itemize}
Then 
\[
D_{\overline{\mathbf{t}}}(\mathbf{i})+w=y=D_{\overline{\mathbf{t}}}(\mathbf{j})+z,
\]
with $w\in \partial R_{\overline{\mathbf{t}}}\cap \Omega$ and $z\in \partial R_{\overline{\mathbf{t}}}\setminus \Omega$. 
In particular, we have 
\begin{equation}
\label{visgini}
z=w+\sum_{m=1}^N \overline{t}_i\,(j_m-i_m)\,\mathbf{e}_m.
\end{equation}
We highlight another property of $y$: recall that $\Omega\subseteq\mathbb{R}^k\times(-a/2,a/2)^{N-k}$. This implies that 
\[
-\frac{a}{2}<\langle w,\mathbf{e}_n\rangle<\frac{a}{2},\qquad\text{for every}\ n\in\{k+1,\dots,N\}.
\]
By observing that (recall that $\overline{t}_n=2\,a$, for $n\in\{k+1,\dots,N\}$)
\[
\langle z,\mathbf{e}_n\rangle=\langle w,\mathbf{e}_n\rangle+2\,a\,(i_n-j_n),
\]
we then obtain
\[
-\frac{a}{2}+2\,a\,(i_n-j_n)<\langle z,\mathbf{e}_n\rangle<\frac{a}{2}+2\,a\,(i_n-j_n),\qquad\text{for every}\ n\in\{k+1,\dots,N\}.
\]
If for some $n\in\{k+1,\dots,N\}$ we had $j_n=i_n+1$, the previous upper bound would give 
\[
\langle z,\mathbf{e}_n\rangle<-\frac{3}{2}\,a,
\]
contradicting the fact that $z\in R_{\overline{\mathbf{t}}}$. Analogously, if we had $j_n=i_n-1$, the lower bound now would give
\[
\langle z,\mathbf{e}_n\rangle>\frac{3}{2}\,a,
\]
which is again not possible. This in turn implies that we must have $i_n=j_n$ for every $n\in\{k+1,\dots,N\}$.
\par
Then, by \eqref{visgini} we get 
\[
z=w+\sum_{m=1}^k \overline{t}_i\,(j_m-i_m)\,\mathbf{e}_m.
\]
By recalling that $w$ in particular belongs to $\Omega$ and using the periodicity of $\Omega$ in the first $k$ coordinate direction, we get that the right-hand side above gives a point belonging to $\Omega$, as well. In other words, we get $z\in\Omega$, which is a contradiction.
\end{proof}
We present a simple example, in order to give a flavour of the range of applicability of the previous result.
\begin{exa}
For a given $R>0$, let us consider $\Sigma\subseteq\mathbb{R}^3$ the image the following three-dimensional curve
\[
\gamma(t)=(t,R\,\cos t,R\,\sin t),\qquad \text{for}\ t\in\mathbb{R}.
\]
We then define 
\[
\Sigma_r=\left\{x\in\mathbb{R}^3\, :\, \mathrm{dist}(x,\Sigma)<r\right\},
\]
for some $r<R$ (see Figure \ref{fig:fusillone}). 
\begin{figure}
\includegraphics[scale=.25]{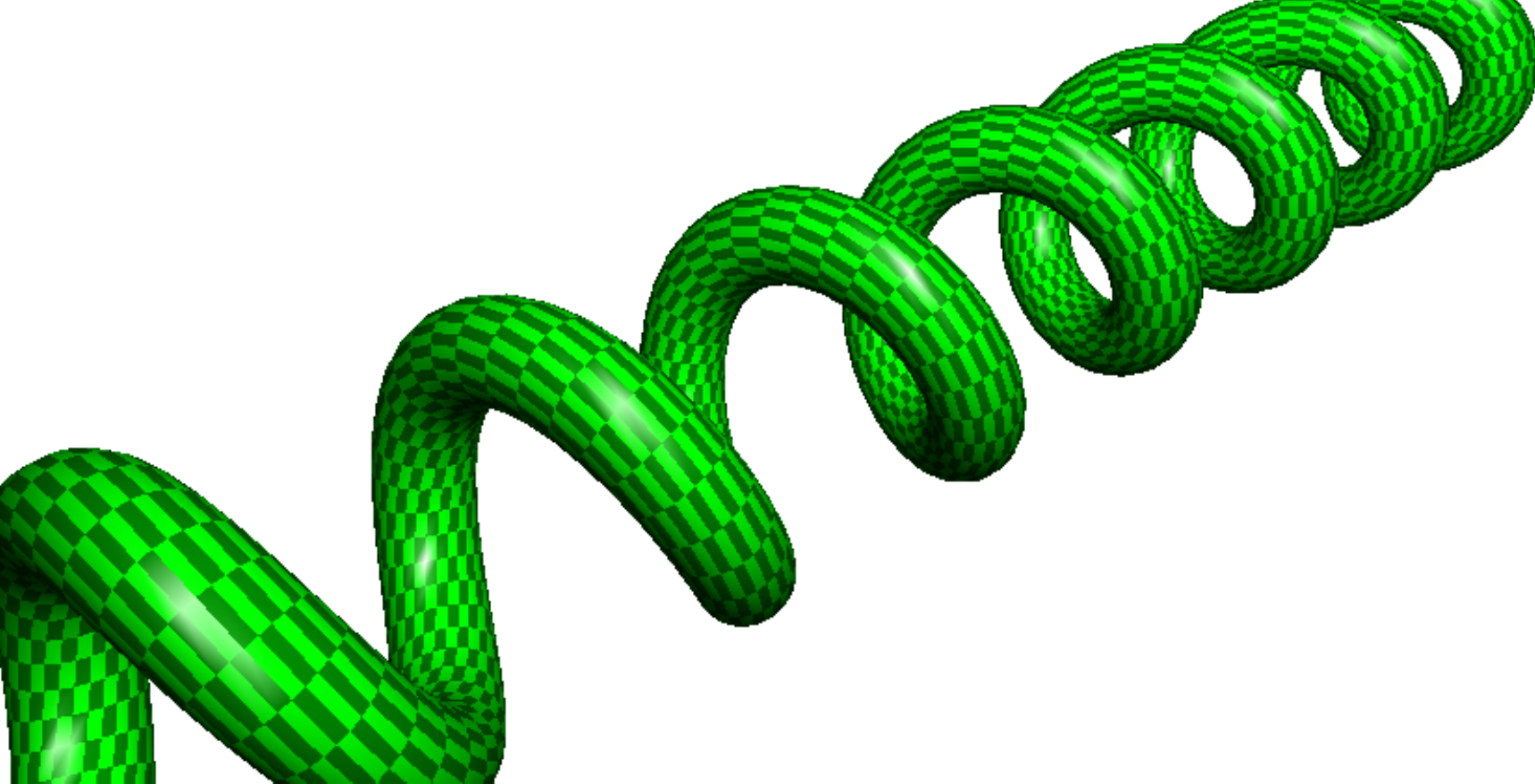}
\caption{The tubular neighborhood of a helical spring: by Theorem \ref{teo:pupazzo}, for this set we have existence of extremals for $\lambda_{p,q}$, with $q>p$.}
\label{fig:fusillone}
\end{figure}
Observe that this set satisfies the assumption of Theorem \ref{teo:pupazzo} with $N=3$, $k=1$ and $t_1=2\,\pi$, i.e. we have 
\[
\Sigma_r+2\,\pi\,\mathbf{e}_1=\Sigma_r.
\]
Accordingly, we get existence of an extremal for $\lambda_{p,q}(\Sigma_r)$, for every $1<p<\infty$ and every $q$ satisfying \eqref{exponents}.
\end{exa}
As a particular case of the previous theorem, we can recover an existence result due to Esteban (see \cite[Theorems 1 and 6]{Es}), in the case of a power--type nonlinearity. This deals with cylindrical--type sets, i.e. open sets of the form $\mathbb{R}^k\times \omega$, with $\omega\subseteq \mathbb{R}^{N-k}$ open {\it bounded} set.
\begin{coro}
\label{coro:Es}
Let $N\ge 2$ and $k\in\{1,\dots,N-1\}$. Let $\omega\subseteq \mathbb{R}^{N-k}$ be an open bounded set. Then, for every $1<p<\infty$ and every $q>p$ satisfying \eqref{exponents}, there exists an extremal for $\lambda_{p,q}(\mathbb{R}^k\times\omega)$.
\end{coro}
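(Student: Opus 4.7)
The plan is to note that $\mathbb{R}^k\times \omega$ fits directly into the framework of Theorem \ref{teo:pupazzo}, so the corollary will follow by a one-line verification of the hypotheses (P) and (B). The first step is to use that the Poincar\'e-Sobolev constant $\lambda_{p,q}$ and the space $W^{1,p}_0$ are invariant under rigid translations: since $\omega\subseteq \mathbb{R}^{N-k}$ is bounded, up to replacing it by a suitable translate $\omega-y_0$ (with $y_0$ any interior point, e.g.\ the barycenter) I may assume $\omega\subseteq (-a/2,a/2)^{N-k}$ for some $a>0$. Setting $\Omega=\mathbb{R}^k\times\omega$, this is exactly property (B) of Theorem \ref{teo:pupazzo}.

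Property (P) is, on the other hand, trivially satisfied by construction: since $\Omega$ has a product structure with the first $k$ factors equal to $\mathbb{R}$, it is invariant under translation by any positive amount in each of the coordinate directions $\mathbf{e}_1,\dots,\mathbf{e}_k$. In particular, picking any positive $t_1,\dots,t_k$ (e.g.\ $t_i=1$), we have $\Omega+t_i\,\mathbf{e}_i=\Omega$ for every $i\in\{1,\dots,k\}$, as required. At this point Theorem \ref{teo:pupazzo} applies directly and produces an extremal for $\lambda_{p,q}(\mathbb{R}^k\times \omega)$.

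In this sense, there is essentially no ``main obstacle'' left to overcome in the proof of the corollary: all the real work has already been done in Theorem \ref{teo:pupazzo}, which in turn rests on the periodization construction and on the Main Theorem. The only minor point worth commenting is the legitimacy of the preliminary translation used to center $\omega$ inside a cube $(-a/2,a/2)^{N-k}$: this is immediate by a change of variables in the definition of $\lambda_{p,q}$, and can be seen as a trivial instance of Lemma \ref{lm:affine} with $A=I$ and an appropriate vector $\mathbf{b}$.
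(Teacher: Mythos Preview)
Your proof is correct and matches exactly the paper's intended argument: the corollary is stated as an immediate particular case of Theorem~\ref{teo:pupazzo}, and your verification of hypotheses (P) and (B) --- including the harmless preliminary translation of $\omega$ into a centered cube --- is precisely the one-line check required.
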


\section{Breaking the periodicity: examples}
\label{sec:8}

In this section, we fix $0<r<1/2$ and consider the model open set
\[
\Omega=\mathbb{R}^N\setminus \left(\bigcup_{\mathbf{i}\in\mathbb{Z}^N}\overline{B_r}(\mathbf{i})\right).
\]
This is a $\mathbf{t}-$periodically perforated set, generated by $K$, with
\[
\mathbf{t}=(1,\dots,1)\qquad\text{and}\qquad K=\overline{B_r}.
\]
By Theorems \ref{teo:main} and \ref{teo:maininfty}, we have existence of extremals for $\lambda_{p,q}(\Omega)$, for every $1<p<q$ satisfying \eqref{exponents}. We will show in the next two subsections that a slight modification of $\Omega$, which breaks the periodicity, may or may not lead to a loss of existence of extremals.

\subsection{Existence may be lost}
\label{exa:perforato_exa}
We fix a second radius $r<R<1/2$ and define
\[
\Omega_R= \mathbb{R}^N\setminus \left[\left(\bigcup_{\mathbf{i}\in\mathbb{Z}^N\setminus\{\mathbf{0}\}}\overline{B_r(\mathbf{i})}\right) \cup \overline{B_R}\right],
\]  
see Figure \ref{fig:perforato}.
\begin{figure}
\includegraphics[scale=.25]{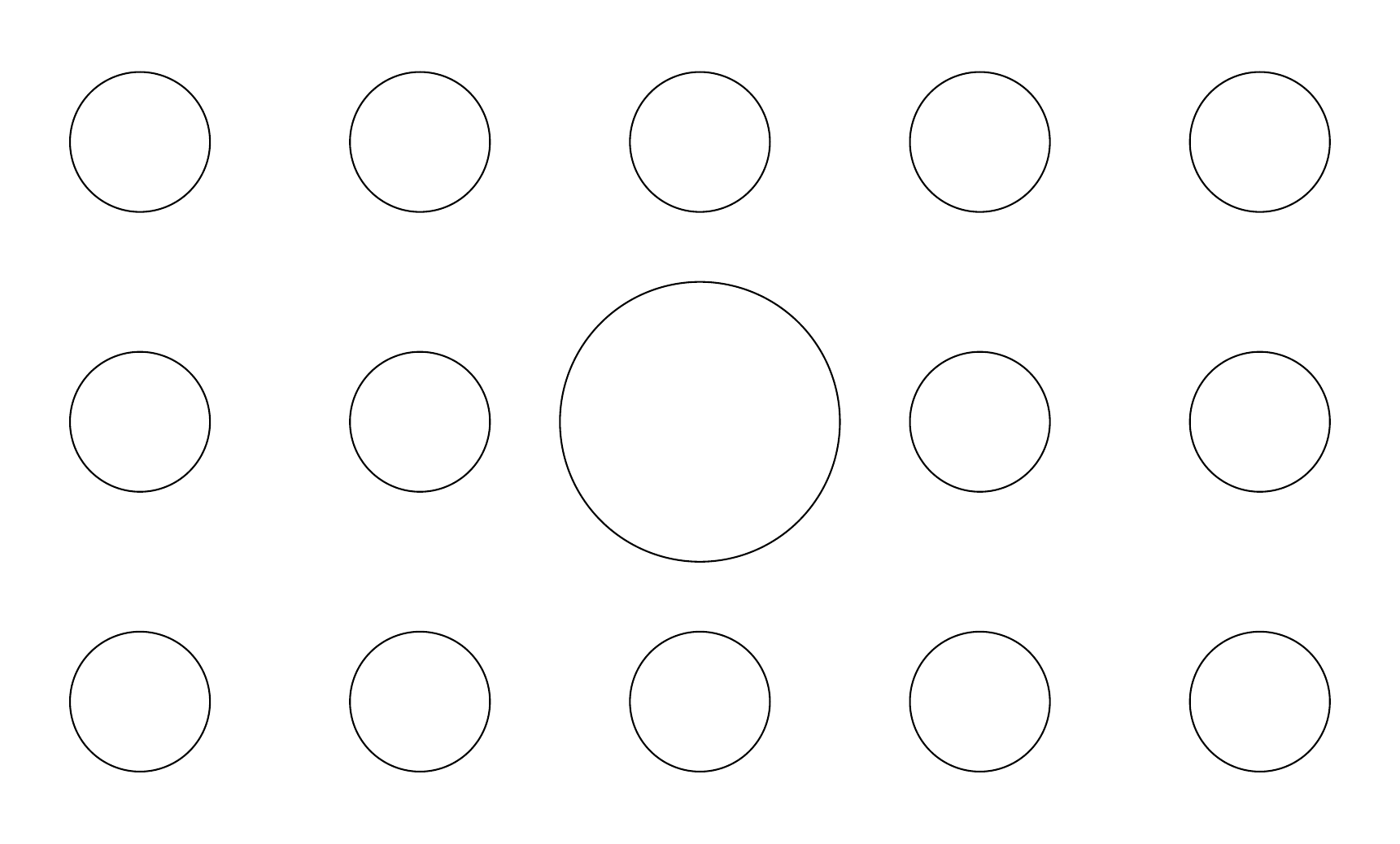}
\caption{The set $\Omega_R$ of Example \ref{exa:perforato_exa}: we remove from $\mathbb{R}^N$ a periodic array of equal balls, except for a larger one, centered at the origin for example. For this set $\lambda_{p,q}$ fails to have extremals.}
\label{fig:perforato}
\end{figure}
We note that  the open set $\Omega_R$ does not satisfy the assumptions of Theorems \ref{teo:main} and \ref{teo:maininfty}. We claim that $\lambda_{p,q}(\Omega_R)$ {\it is not} attained in $W^{1,p}_0(\Omega_R)$, when $p<q$ satisfy \eqref{exponents}.
To this aim, we start by noticing that 
\begin{equation}
\label{uguafori}
\lambda_{p,q}(\Omega_R)=\lambda_{p,q}(\Omega).
\end{equation}
Indeed, the fact that $\lambda_{p,q}(\Omega)\le \lambda_{p,q}(\Omega_R)$ simply follows by the inclusion $\Omega_R\subseteq \Omega$ and the monotonicity of $\lambda_{p,q}$. On the other hand, for every $\varepsilon>0$ there exists $\varphi_\varepsilon\in C^\infty_0(\Omega)$  such that
\[
\lambda_{p,q}(\Omega)+\varepsilon\ge \int_\Omega |\nabla \varphi_\varepsilon|^p\,dx\qquad\text{and}\qquad \| \varphi_\varepsilon\|_{L^q(\Omega)}=1.
\]
Since $\varphi_\varepsilon$ is compactly supported, there exists $n_\varepsilon\in\mathbb{N}$ such that its support is contained in the open set $\Omega\cap \{ x_1>-n_\varepsilon\}$. Therefore, thanks to the periodicity of $\Omega$ and to the translation invariance of the sharp Poincar\'e-Sobolev constant, we have
\[
\begin{split}
\lambda_{p,q}(\Omega)+\varepsilon&\ge \int_\Omega |\nabla \varphi_\varepsilon|^p\,dx\\
&\ge \lambda_{p,q}(\Omega\cap \{ x_1> -n_\varepsilon\})=\lambda_{p,q}(\Omega\cap \{ x_1> 1\}).
\end{split}
\]
On the other hand, by observing that $\Omega\cap \{ x_1> 1\}=\Omega_R\cap \{ x_1> 1\}$ and using the monotonicity of $\lambda_{p,q}$ with respect to the set inclusion, we have
\[
\lambda_{p,q}(\Omega\cap \{ x_1> 1\})=\lambda_{p,q}(\Omega_R\cap \{ x_1> 1\})\geq \lambda_{p,q}(\Omega_R).
\]
We thus get
\[
\lambda_{p,q}(\Omega)+\varepsilon\ge \lambda_{p,q}(\Omega_R)
\]
and by the arbitrariness of $\varepsilon>0$, we deduce that $\lambda_{p,q}(\Omega)\ge \lambda_{p,q}(\Omega_R)$, as well.
\par 
Once we have \eqref{uguafori} at our disposal, it is not difficult to show that  $\lambda_{p,q}(\Omega_R)$ does not admit an extremal. Indeed, any positive extremal $u$ for $\lambda_{p,q}(\Omega_R)$ would be an extremal for $\lambda_{p,q}(\Omega)$, as well, thanks to \eqref{uguafori}. In particular, $u\not\equiv0$ would be a $p-$superharmonic function in $\Omega$, identically vanishing on the set $B_{R}\setminus B_{r}$, which has positive measure. This would violate the minimum principle.

\subsection{Existence may persist}
\label{exa:perforato_exa2}

With the previous notation, we fix now a smaller radius $0<\rho<r$ and define
\[
\Omega_\rho= \mathbb{R}^N\setminus \left[\left(\bigcup_{\mathbf{i}\in\mathbb{Z}^N\setminus\{\mathbf{0}\}}\overline{B_r(\mathbf{i})}\right) \cup \overline{B_\rho}\right].
\]  
In other words, we break the periodicity of $\Omega$
by digging this time a smaller hole at the origin. We claim that $\lambda_{p,q}(\Omega_\rho)$ {\it is now attained} in $W^{1,p}_0(\Omega_\rho)$, when $p<q$ satisfy \eqref{exponents}. We will focus for simplicity on the case $p=2$, for which we can appeal to an elementary argument. If we set 
\[
\mathcal{E}_{2,q}(\Omega_\rho):=\sup_{R>0}\lambda_{2,q}\big(\Omega_\rho\setminus \overline{B_R}\big),
\]
we have that (see Proposition \ref{prop:esistenzadelcazzo}), it is sufficient to prove that 
\begin{equation}
\label{sottosotto}
\lambda_{2,q}(\Omega_\rho)<\mathcal{E}_{2,q}(\Omega_\rho),
\end{equation}
in order to get existence of extremals. 
We first observe that 
\[
\mathcal{E}_{2,q}(\Omega_\rho)=\lambda_{2,q}(\Omega),
\]
by Lemma \ref{lm:skorpion} below. This in turn implies the desired property \eqref{sottosotto}.
Indeed, by monotonicity with respect to the set inclusion, we have 
\[
\lambda_{2,q}(\Omega_\rho)\le \lambda_{2,q}(\Omega).
\]
Moreover, since $\lambda_{2,q}(\Omega)$ is attained by some positive function $u\in W^{1,2}_0(\Omega)$, thanks to Theorem \ref{teo:main}, we can not have equality. Otherwise, the function $u$ would be an extremal for $\Omega_\rho$, as well, thus violating again the minimum principle (i.e. it would vanish on the set $B_r\setminus B_\rho$).
\vskip.2cm\noindent
\begin{lm}
\label{lm:skorpion}
With the notation above, we have
\begin{equation}
\label{energiainfinito}
\mathcal{E}_{2,q}(\Omega_\rho)=\mathcal{E}_{2,q}(\Omega)=\lambda_{2,q}(\Omega).
\end{equation} 
\end{lm}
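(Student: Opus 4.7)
The plan is to decouple the two equalities in \eqref{energiainfinito}, proving the first by a direct set-theoretic comparison and the second by exploiting the periodicity of $\Omega$.

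For the first equality $\mathcal{E}_{2,q}(\Omega_\rho)=\mathcal{E}_{2,q}(\Omega)$, I would start from the observation that for every $R\geq r$ one has $\overline{B_\rho}\subseteq \overline{B_r}\subseteq \overline{B_R}$, hence removing $\overline{B_R}$ from either $\Omega$ or $\Omega_\rho$ eliminates the only annulus in which these two sets differ. This gives
\[
\Omega\setminus \overline{B_R}=\Omega_\rho\setminus \overline{B_R},\qquad \text{for every } R\geq r,
\]
and in particular $\lambda_{2,q}(\Omega\setminus \overline{B_R})=\lambda_{2,q}(\Omega_\rho\setminus \overline{B_R})$ on that range. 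Since $R\mapsto \lambda_{2,q}(E\setminus \overline{B_R})$ is monotone non-decreasing for any open set $E$ (the admissible test function space shrinks as $R$ grows), the sup defining $\mathcal{E}_{2,q}$ is effectively a limit as $R\to+\infty$, so only the tail $R\geq r$ contributes; the first identity follows.

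For the second equality $\mathcal{E}_{2,q}(\Omega)=\lambda_{2,q}(\Omega)$, the inequality $\mathcal{E}_{2,q}(\Omega)\geq \lambda_{2,q}(\Omega)$ is immediate from monotonicity. The reverse inequality is where the periodicity enters. Given any $\varphi\in C^\infty_0(\Omega)$ with $\|\varphi\|_{L^q(\Omega)}=1$, its support is contained in some ball $B_M$. For each $k\in\mathbb{N}$, set $\varphi_k(x)=\varphi(x-k\,\mathbf{e}_1)$: since $\Omega+k\,\mathbf{e}_1=\Omega$ by the $1$-periodicity in direction $\mathbf{e}_1$, the function $\varphi_k$ still lies in $C^\infty_0(\Omega)$, and whenever $k>M+R$ its support is disjoint from $\overline{B_R}$, so that $\varphi_k\in C^\infty_0(\Omega\setminus \overline{B_R})$. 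Translation invariance of the Dirichlet integral and of the $L^q$ norm then yields
\[
\lambda_{2,q}\bigl(\Omega\setminus \overline{B_R}\bigr)\leq \int_\Omega |\nabla \varphi_k|^2\,dx=\int_\Omega |\nabla \varphi|^2\,dx,
\]
and taking the infimum over $\varphi$ gives $\lambda_{2,q}(\Omega\setminus \overline{B_R})\leq \lambda_{2,q}(\Omega)$ for every $R>0$. Passing to the supremum closes the argument.

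The whole proof is elementary, with no real obstacle: the only point that requires some care is to make sure that the translated test functions $\varphi_k$ remain both supported in $\Omega$ (this is periodicity) and disjoint from $\overline{B_R}$ (this dictates the choice $k>M+R$), but these two requirements are obviously compatible because the sliding direction $\mathbf{e}_1$ takes any compact set arbitrarily far from the origin.
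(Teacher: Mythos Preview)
Your proof is correct and follows essentially the same approach as the paper: both arguments observe that $\Omega_\rho\setminus\overline{B_R}=\Omega\setminus\overline{B_R}$ for $R\ge r$, and both exploit the $1$-periodicity in direction $\mathbf{e}_1$ to slide near-optimal test functions for $\lambda_{2,q}(\Omega)$ outside any given ball. The only cosmetic difference is that the paper compares the constants of the translated half-sets $\Omega\cap\{x_1>-n_\varepsilon\}$ and $\Omega\cap\{x_1>m_\varepsilon\}$, while you translate the test function directly; these are two phrasings of the same idea.
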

\begin{proof}
We notice that  
\[
\lambda_{2,q}\big(\Omega_\rho\setminus \overline{B_R})=\lambda_{2,q}\big(\Omega\setminus \overline{B_R}),\quad \hbox{ for every } R>r,
\]
thus we get $\mathcal{E}_{2,q}(\Omega_\rho)=\mathcal{E}_{2,q}(\Omega)$. 
Accordingly, proving \eqref{energiainfinito} boils down to showing that 
\[
%\begin{equation}
%\label{fails}
\lambda_{2,q}(\Omega)=\mathcal{E}_{2,q}(\Omega).
%\end{equation}
\]
We clearly have that  $\lambda_{2,q}(\Omega)\le \mathcal{E}_{2,q}(\Omega)$, thanks to the  domain monotonicity (just observe that $\Omega\setminus\overline{B_R}\subseteq \Omega$, for every $R>0$). For the converse inequality,  let $ \varepsilon>0$ and let     $m_{\varepsilon}\in \mathbb N$ be  such that  
\begin{equation}
\label{maggioro1}
\mathcal{E}_{2,q}(\Omega) \leq \lambda_{2,q}\big(\Omega\setminus \overline{B_{m_{\varepsilon}}})+\varepsilon.
\end{equation}
We also take $\varphi_\varepsilon\in C^\infty_0(\Omega)$ such that
\[
\lambda_{2,q}(\Omega)+\varepsilon\ge \int_{\Omega} |\nabla \varphi_\varepsilon|^2\,dx\qquad\text{and}\qquad \| \varphi_\varepsilon\|_{L^q(\Omega)}=1.
\]
As in the previous subsection, since $\varphi_\varepsilon$ is compactly supported, we have that its support is contained in the open set $\Omega\cap \{ x_1>-n_\varepsilon\}$, for some $n_\varepsilon\in\mathbb{N}$ large enough. Therefore, we get
\[
\begin{split}
\lambda_{2,q}(\Omega)+\varepsilon&\ge \int_{\Omega} |\nabla \varphi_\varepsilon|^2\,dx\ge \lambda_{2,q}(\Omega\cap \{ x_1> -n_\varepsilon\})=\lambda_{2,q}(\Omega\cap \{ x_1> m_{\varepsilon}\}).\\
%&\geq  \lambda_{2,q}\big(\Omega\setminus \overline{B_{m_{\varepsilon}}})\geq \mathcal{E}_{2,q}(\Omega) -\varepsilon.
\end{split}
\]
The last equality follows from the periodicity of $\Omega$ and the translation invariance of the sharp Poincar\'e-Sobolev constant. If we now observe that $\Omega\cap \{ x_1> m_{\varepsilon}\}\subseteq \Omega\setminus \overline{B_{m_{\varepsilon}}}$, from the previous estimate we get
\[
\lambda_{2,q}(\Omega)+\varepsilon\geq  \lambda_{2,q}\big(\Omega\setminus \overline{B_{m_{\varepsilon}}})\geq \mathcal{E}_{2,q}(\Omega) -\varepsilon,
\]
where we also used \eqref{maggioro1}.
By the arbitrariness of $\varepsilon>0$, we deduce that $\lambda_{2,q}(\Omega)\ge \mathcal{E}_{2,q}(\Omega)$, as well.
\end{proof}

\appendix

\section{Embedding constants at infinity}
\label{sec:A}

In what follows, for an open set $\Omega\subseteq\mathbb{R}^N$ we use the following notation
\[
\mathcal{S}_{p,q}(\Omega)=\Big\{u\in W^{1,p}_0(\Omega)\, :\, \|u\|_{L^q(\Omega)}=1\Big\}.
\]
We also define its {\it Poincar\'e-Sobolev constant at infinity}
\[
\mathcal{E}_{p,q}(\Omega):=\sup_{R>0}\lambda_{p,q}\big(\Omega\setminus \overline{B_R}\big).
\]
The next result asserts that, for sequences of normalized functions whose $p-$Dirichlet energy is below the threshold ``at infinity'' $\mathcal{E}_{p,q}(\Omega)$, compactness in $L^q$ can not be completely lost. More precisely:
\begin{lm}
\label{lm:avanzo}
Let $1<p<\infty$ and let $q\ge p$ be a finite exponent verifying \eqref{exponents}. Let $\Omega\subseteq\mathbb{R}^N$ be an open unbounded set such that $\mathcal{E}_{p,q}(\Omega)>0$. Let $\{u_n\}_{n\in\mathbb{N}}\subseteq \mathcal{S}_{p,q}(\Omega)$ be a sequence with the following properties:
\begin{itemize}
\item $u_n$ weakly converges in $W^{1,p}(\Omega)$ to some function $u\in W^{1,p}_0(\Omega)$;
\vskip.2cm
\item there exist $M<\mathcal{E}_{p,q}(\Omega)$ and $n_0\in\mathbb{N}$ such that 
\[
\int_\Omega |\nabla u_n|^p\,dx\le M,\qquad \text{for every}\ n\ge n_0.
\]
\end{itemize}
Then, there exists $0<\mathfrak{d}=\mathfrak{d}(M/\mathcal{E}_{p,q}(\Omega))<1$ such that $\|u\|_{L^q(\Omega)}\ge \mathfrak{d}$. Moreover, the constant $\mathfrak{d}$ is such that we have 
\[
\lim_{\mathcal{E}_{p,q}(\Omega)\to+\infty} \mathfrak{d}=1.
\]
\end{lm}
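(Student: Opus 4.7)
The strategy is a standard cutoff-and-split argument that isolates a ``near the origin'' piece, which gains compactness by Rellich, from a ``tail'' piece, which is handled via the sharp constant $\lambda_{p,q}(\Omega\setminus\overline{B_R})$. Fix $R>0$ (to be chosen large) and a cutoff $\eta_R\in C^\infty(\mathbb{R}^N)$ with $0\le\eta_R\le 1$, $\eta_R\equiv 0$ on $B_R$, $\eta_R\equiv 1$ on $\mathbb{R}^N\setminus B_{2R}$, $|\nabla\eta_R|\le C/R$. I write $u_n=(1-\eta_R)u_n+\eta_R u_n$ and use the $L^q$ triangle inequality
\[
1=\|u_n\|_{L^q(\Omega)}\le \|(1-\eta_R)u_n\|_{L^q(\Omega)}+\|\eta_R u_n\|_{L^q(\Omega)}.
\]

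For the first piece: $\nabla u_n$ is bounded in $L^p(\Omega)$ by $M$, and by H\"older (using $\|u_n\|_{L^q(\Omega)}=1$) $u_n$ is bounded in $L^p(B_{2R}\cap\Omega)$ uniformly in $n$; hence $\{u_n\}$ is bounded in $W^{1,p}(B_{2R}\cap\Omega)$. Since $q$ satisfies \eqref{exponents}, Rellich–Kondrachov gives $u_n\to u$ strongly in $L^q(B_{2R}\cap\Omega)$ (the weak $W^{1,p}$ limit being $u$), so
\[
\lim_{n\to\infty}\|(1-\eta_R)u_n\|_{L^q(\Omega)}=\|(1-\eta_R)u\|_{L^q(\Omega)}\le \|u\|_{L^q(\Omega)}.
\]

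For the tail piece, $\eta_R u_n$ belongs to $W^{1,p}_0(\Omega\setminus\overline{B_R})$ (if $\varphi_k\in C^\infty_0(\Omega)$ approximates $u_n$ in $W^{1,p}$, then $\eta_R\varphi_k\in C^\infty_0(\Omega\setminus\overline{B_R})$ approximates $\eta_R u_n$), so by definition of $\lambda_{p,q}$,
\[
\lambda_{p,q}\bigl(\Omega\setminus\overline{B_R}\bigr)\,\|\eta_R u_n\|_{L^q(\Omega)}^p\le \int_\Omega |\nabla(\eta_R u_n)|^p\,dx.
\]
For every $\varepsilon>0$, Young's inequality gives
\[
\int_\Omega|\nabla(\eta_R u_n)|^p\,dx\le (1+\varepsilon)\int_\Omega |\nabla u_n|^p\,dx+C_{\varepsilon,p}\int_{B_{2R}}|u_n|^p\,|\nabla\eta_R|^p\,dx.
\]
The first summand is $\le (1+\varepsilon)M$ for $n\ge n_0$. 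For the second, combining $|\nabla\eta_R|\le C/R$ with H\"older's inequality and $\|u_n\|_{L^q(\Omega)}=1$ yields
\[
\int_{B_{2R}}|u_n|^p\,|\nabla\eta_R|^p\,dx\le \frac{C^p}{R^p}\,|B_{2R}|^{1-p/q}\le C'\,R^{N(1-p/q)-p}.
\]
Here is the only subtle point: the exponent $N(1-p/q)-p$ is strictly negative precisely because $q$ is subcritical, i.e.\ satisfies \eqref{exponents} (when $p<N$ this is $q<p^*$; when $p\ge N$ it holds for every finite $q>p$). Hence this term is $o_R(1)$, uniformly in $n$, as $R\to\infty$.

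Putting the two estimates together, for $R$ large and $n$ large
\[
\|u\|_{L^q(\Omega)}\ge \|(1-\eta_R)u\|_{L^q(\Omega)}\ge 1-\limsup_{n\to\infty}\|\eta_R u_n\|_{L^q(\Omega)}\ge 1-\left(\frac{(1+\varepsilon)M+o_R(1)}{\lambda_{p,q}\bigl(\Omega\setminus\overline{B_R}\bigr)}\right)^{1/p}.
\]
By monotonicity, $\lambda_{p,q}(\Omega\setminus\overline{B_R})\nearrow \mathcal{E}_{p,q}(\Omega)$ as $R\to\infty$. Sending $R\to\infty$ and then $\varepsilon\to 0$ I obtain
\[
\|u\|_{L^q(\Omega)}\ge 1-\left(\frac{M}{\mathcal{E}_{p,q}(\Omega)}\right)^{1/p}=:\mathfrak{d},
\]
which is strictly positive by the hypothesis $M<\mathcal{E}_{p,q}(\Omega)$ and depends only on the ratio $M/\mathcal{E}_{p,q}(\Omega)$ (and on $p$). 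With $M$ fixed, $\mathfrak{d}\to 1$ as $\mathcal{E}_{p,q}(\Omega)\to+\infty$, as claimed. The only genuine difficulty is handling the cutoff error uniformly in $n$, which is precisely where subcriticality enters decisively.
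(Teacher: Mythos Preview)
Your proof is correct and follows the same cutoff-and-split philosophy as the paper's, but the technical execution differs in a useful way. The paper uses a \emph{fixed-width} cutoff on the annulus $B_{R_*+1}\setminus B_{R_*}$; consequently, the gradient of the cutoff does not decay and the error term $\int |\nabla\eta|^p |u_n-u|^p$ must be killed by the local strong convergence $u_n\to u$ on that annulus. This forces the paper to work with the difference $u_n-u$ rather than $u_n$ itself, which in turn requires an extra step: a second radius $R_1$ chosen so that $\int_{\Omega\setminus B_{R_1}}|\nabla u|^p$ is small. The resulting constant is $\mathfrak{d}=(1-\theta_0)\bigl(1-\theta_0^{1/(2p)}\bigr)$ with $\theta_0=M/\mathcal{E}_{p,q}(\Omega)$.

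Your approach is cleaner: by scaling the cutoff so that $|\nabla\eta_R|\le C/R$ on an annulus of width $\sim R$, the error term is bounded via H\"older by $R^{N(1-p/q)-p}$, and you observe that subcriticality is exactly the condition that makes this exponent negative. This dispenses with the auxiliary radius $R_1$ and with the need to estimate $\nabla u$ on the tail, and it yields the sharper and simpler constant $\mathfrak{d}=1-\theta_0^{1/p}$. One small point worth making explicit: Rellich--Kondrachov is applied after extending $u_n\in W^{1,p}_0(\Omega)$ by zero to $W^{1,p}(\mathbb{R}^N)$, so that compactness holds on the full ball $B_{2R}$ regardless of the regularity of $\partial\Omega$; this is implicit in your argument but deserves a word.
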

\begin{proof}
Let us suppose at first that $\mathcal{E}_{p,q}(\Omega)<+\infty$.
We set 
\[
\theta_0=\frac{M}{\mathcal{E}_{p,q}(\Omega)}<1,
\]
and choose $R_0>0$ such that 
\[
\frac{M}{\lambda_{p,q}(\Omega\setminus\overline{B_{R}})}\leq \sqrt{\theta_0}=:\theta_1,
\qquad \text{for every}\ R\ge R_0.
\]
Observe that $\theta_0<\theta_1<1$.
Such a radius $R_0$ exists thanks to the assumption and the definition of $\mathcal{E}_{p,q}(\Omega)$. We also take $R_1>0$ such that 
\[
\int_{\Omega\setminus B_{R_1}} |\nabla u|^p\,dx\le M\,\theta_2,\qquad \text{where}\ \theta_2=\theta_0^p\,\left(\frac{1}{\theta_1^\frac{1}{p}}-1\right)^p,
\]
a choice which is feasible, since $\nabla u\in L^p(\Omega;\mathbb{R}^N)$.
We set $R_*=\max\{R_0,R_1\}$. For every $r>0$, we take a cut-off function Lipschitz function $\eta$ such that 
\[
0\le \eta\le 1,\qquad \eta\equiv 0\ \text{on}\ \overline{B_{R_*}},\qquad \eta\equiv 1 \mbox{ on }\ \mathbb{R}^N\setminus B_{R_*+1},\qquad \|\nabla \eta\|_{L^\infty}=1.
\]
We then have, thanks to the properties of $\eta$ 
\[
\begin{split}
\left(\int_{\Omega\setminus B_{R_*+1}} |u_n-u|^q\,dx\right)^\frac{1}{q}&\le \left(\int_{\Omega\setminus B_{R_*}} |(u_n-u)\,\eta|^q\,dx\right)^\frac{1}{q}\\
&\le \left(\frac{1}{\lambda_{p,q}(\Omega\setminus\overline{B_{R_*}})}\right)^\frac{1}{p}\,\left(\int_{\Omega\setminus B_{R_*}} |\nabla u_n-\nabla u|^p\,dx\right)^\frac{1}{p}\\
&+\left(\frac{1}{\lambda_{p,q}(\Omega\setminus\overline{B_{R_*}})}\right)^\frac{1}{p}\,\left(\int_{B_{R_*+1}\setminus B_{R_*}} |u_n-u|^p\,dx\right)^\frac{1}{p}.
\end{split}
\]
In particular, by using that
\[
\begin{split}
\left(\int_{\Omega\setminus B_{R_*}} |\nabla u_n-\nabla u|^p\,dx\right)^\frac{1}{p}&\le \left(\int_{\Omega\setminus B_{R_*}} |\nabla u_n|^p\,dx\right)^\frac{1}{p}+\left(\int_{\Omega\setminus B_{R_*}} |\nabla u|^p\,dx\right)^\frac{1}{p}\\
&\le M^\frac{1}{p}+M^\frac{1}{p}\,\theta_2^\frac{1}{p},
\end{split}
\]
we get
\begin{equation}
\label{differenzapq}
\begin{split}
\left(\int_{\Omega\setminus B_{R_*+1}} |u_n-u|^q\,dx\right)^\frac{1}{q}&\le \theta_1^\frac{1}{p}\,\left(1+\theta_2^\frac{1}{p}\right)\\
&+\left(\frac{|B_{R_*+1}\setminus B_{R_*}|^\frac{q-p}{q}}{\lambda_p(\Omega\setminus\overline{B_{R_*}})}\right)^\frac{1}{p}\,\left(\int_{B_{R_*+1}\setminus B_{R_*}} |u_n-u|^q\,dx\right)^\frac{1}{q}.
\end{split}
\end{equation}
Observe that we also used H\"older's inequality on the last term.
By using the compact embedding $W^{1,p}(B_{R^*+1})\hookrightarrow L^q(B_{R^*+1})$, we have
\[
\lim_{n\to\infty} \|u_n-u\|_{L^q(B_{R_*+1})}=0.
\]
Thus, for every $\varepsilon>0$, there exists $n_\varepsilon$ such that 
\[
\|u_n-u\|_{L^q(B_{R_*+1})}< \varepsilon,\qquad \mbox{ for every } n\ge n_\varepsilon.
\]
From \eqref{differenzapq}, for every $n\ge n_\varepsilon$ we have 
\[
\left(\int_{\Omega\setminus B_{R_*+1}} |u_n-u|^q\,dx\right)^\frac{1}{q}\le\theta_1^\frac{1}{p}\,\left(1+\theta_2^\frac{1}{p}\right)+
\varepsilon\,\left(\frac{|B_{R_*+1}\setminus B_{R_*}|^\frac{q-p}{q}}{\lambda_p(\Omega\setminus\overline{B_{R_*}})}\right)^\frac{1}{p}\,.
\]
This in turn gives for every $n\ge n_\varepsilon$
\[
\|u_n-u\|_{L^q(\Omega)}\le\varepsilon+ \theta_1^\frac{1}{p}\,\left(1+\theta_2^\frac{1}{p}\right) +\varepsilon\,\left(\frac{|B_{R_*+1}\setminus B_{R_*}|^\frac{q-p}{q}}{\lambda_p(\Omega\setminus\overline{B_{R_*}})}\right)^\frac{1}{p}.
\]
By Minkowski's inequality, we get
\[
\begin{split}
\|u\|_{L^q(\Omega)}&\ge \|u_n\|_{L^q(\Omega)}-\|u_n-u\|_{L^q(\Omega)}\\
&\ge \left[1-\theta_1^\frac{1}{p}\,\left(1+\theta_2^\frac{1}{p}\right)\right]-\varepsilon\,\left[1+\left(\frac{|B_{R_*+1}\setminus B_{R_*}|^\frac{q-p}{q}}{\lambda_p(\Omega\setminus\overline{B_{R_*}})}\right)^\frac{1}{p}\right].
\end{split}
\]
This lower bound holds for every $\varepsilon>0$. By taking the limit as $\varepsilon$ goes to $0$, we get
\[
\|u\|_{L^q(\Omega)}\ge 1-\theta_1^\frac{1}{p}\,\left(1+\theta_2^\frac{1}{p}\right)=:\mathfrak{d}.
\]
By recalling the definition of $\theta_1$ and $\theta_2$, this is the same as
\[
\mathfrak{d}=1-\theta_0^\frac{1}{2\,p}\,(1-\theta_0)-\theta_0=(1-\theta_0)\,\left(1-\theta_0^\frac{1}{2\,p}\right),
\]
and it is not difficult to see that this quantity is positive.
The last statement, easily follows by observing that
\[
\lim_{\mathcal{E}_{p,q}(\Omega)\to +\infty} \theta_0=0.
\] 
At last, the case when $\mathcal{E}_{p,q}(\Omega)=+\infty$ can be treated with the same argument, by  choosing first any $0<\theta_0<1$ and then by letting $\theta_0$ go to $0$. Accordingly, we can show that in this case we have $\|u\|_{L^q(\Omega)}\ge 1$, as expected. We leave the details to the reader. 
\end{proof}
\begin{comment}
\begin{rem}
The previous result in general can not be improved, i.e. one can not expect to get strong convergence in $L^q$ for the sequence $\{u_n\}_{n\in\mathbb{N}}$. As a simple example, take $q=p$ and
\[
\Omega=B_R(0)\cup \Big(\mathbb{R}^{N-1}\times(R,R+1)\Big),
\]
i.e. the disjoint union of an open ball and a slab. For $R>0$ large enough, we have 
\[
\lambda_{p}(B_R(0))<\lambda_{p}\Big(\mathbb{R}^{N-1}\times(R,R+1)\Big),
\]
thus we get
\[
\lambda_{p}(\Omega)=\lambda_{p}(B_R(0)).
\]
Moreover, it is not difficult to see that $\mathcal{E}_{p}(\Omega)=\lambda_{p}\Big(\mathbb{R}^{N-1}\times(R,R+1)\Big)$. We take $\{\varphi_n\}_{n\in\mathbb{N}}\subseteq C^\infty_0(\mathbb{R}^{N-1}\times(R,R+1))\cap \mathcal{S}_p(\Omega)$ a minimizing sequence for $\lambda_{p}(\mathbb{R}^{N-1}\times(R,R+1))$ and $u\in\mathcal{S}_p(\Omega)$ a minimizer for $\lambda_p(B_R(0))$. Then for the sequence
\[
v_n=\frac{u+\varphi_n}{2^{1/p}}\in\mathcal{S}_p(\Omega),
\]
we have 
\[
\begin{split}
\lim_{n\to\infty}\int_\Omega |\nabla v_n|^p&=\lim_{n\to\infty}\left(\frac{1}{2}\,\int_\Omega |\nabla u|^p\,dx+\frac{1}{2}\,\int_\Omega |\nabla \varphi_n|^p\,dx\right)\\
&=\frac{\lambda_{p}(B_R(0))+\lambda_{p}\Big(\mathbb{R}^{N-1}\times(R,R+1)\Big)}{2}<\mathcal{E}_p(\Omega).
\end{split}
\]
On the other hand, the sequence $v_n$ does not converge strongly in $L^p(\Omega)$.
\end{rem}
\end{comment}
Thanks to Lemma \ref{lm:avanzo}, we get an easy condition to have existence of extremals for $\lambda_{p,q}$, in some cases: this is the content of the next result. We point out that it {\it does not} apply to periodically perforated sets (see Remark \ref{rem:noperiodico} below).
For this reason, we will focus on the case $p=2$ only: the general case will be treated in a forthcoming paper. 
\begin{prop}
\label{prop:esistenzadelcazzo}
Let $p=2$ and $q>2$ be an exponent satisfying \eqref{exponents}. Let $\Omega\subseteq\mathbb{R}^N$ be an open set such that 
\begin{equation}
\label{minoratoapp}
\lambda_{2,q}(\Omega)<\mathcal{E}_{2,q}(\Omega).
\end{equation}
Then there exists an extremal for $\lambda_{2,q}(\Omega)$.
\end{prop}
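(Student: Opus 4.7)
The plan is to run the direct method on a minimizing sequence and use the strict gap $\lambda_{2,q}(\Omega)<\mathcal{E}_{2,q}(\Omega)$ to preclude any mass escaping to infinity. Let $\{u_n\}_{n\in\mathbb{N}}\subseteq \mathcal{S}_{2,q}(\Omega)$ realize $\lambda_{2,q}(\Omega)$. The sequence is bounded in $W^{1,2}_0(\Omega)$, so up to a subsequence $u_n\rightharpoonup u$ in $W^{1,2}_0(\Omega)$, with $u_n\to u$ almost everywhere and in $L^q_{\mathrm{loc}}(\Omega)$ (by the compactness of local Sobolev embeddings plus a diagonal argument). Fixing any $M\in(\lambda_{2,q}(\Omega),\mathcal{E}_{2,q}(\Omega))$, Lemma \ref{lm:avanzo} applies and forces $a:=\|u\|_{L^q(\Omega)}>0$.

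The crucial feature of the Hilbertian case $p=2$ is that, thanks to $\nabla u_n\rightharpoonup \nabla u$ in $L^2$, the usual expansion yields the \emph{exact} identity
\[
\int_\Omega |\nabla u_n|^2\,dx = \int_\Omega |\nabla u|^2\,dx + \int_\Omega |\nabla(u_n-u)|^2\,dx + o(1).
\]
Coupled with the Brezis-Lieb Lemma in $L^q$ (which requires only the a.e. convergence and the uniform $L^q$ bound already at hand),
\[
1=\int_\Omega |u|^q\,dx + \int_\Omega |u_n-u|^q\,dx + o(1),
\]
we may assume, extracting once more, that $\|u_n-u\|_{L^q(\Omega)}\to b$ with $a^q+b^q=1$.

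The decisive step is a tail estimate for the Dirichlet energy of $u_n-u$. For every $R>0$, take a Lipschitz cut-off $\eta_R$ with $\eta_R\equiv 0$ on $B_R$, $\eta_R\equiv 1$ on $\mathbb{R}^N\setminus B_{R+1}$, $\|\nabla\eta_R\|_\infty\le 1$. Then $(u_n-u)\,\eta_R\in W^{1,2}_0(\Omega\setminus \overline{B_R})$, so
\[
\lambda_{2,q}\!\left(\Omega\setminus\overline{B_R}\right)\,\|(u_n-u)\,\eta_R\|_{L^q(\Omega)}^2 \le \|\nabla((u_n-u)\,\eta_R)\|_{L^2(\Omega)}^2.
\]
The local strong convergences of $u_n-u\to 0$ in $L^2(B_{R+1})$ and $L^q(B_{R+1})$ absorb the commutator terms coming from $\nabla\eta_R$ and from $(1-\eta_R)(u_n-u)$. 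Passing to $\liminf_{n\to\infty}$ first, and then taking the supremum over $R$, we reach
\[
\liminf_{n\to\infty}\int_\Omega |\nabla(u_n-u)|^2\,dx \ge \mathcal{E}_{2,q}(\Omega)\,b^2.
\]

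Plugging this bound, together with $\|\nabla u\|_{L^2(\Omega)}^2\ge \lambda_{2,q}(\Omega)\,a^2$ (immediate from the definition of $\lambda_{2,q}(\Omega)$), into the Hilbert identity, we get
\[
\lambda_{2,q}(\Omega) \ge \lambda_{2,q}(\Omega)\,a^2+\mathcal{E}_{2,q}(\Omega)\,b^2.
\]
If $b>0$, using $\mathcal{E}_{2,q}(\Omega)>\lambda_{2,q}(\Omega)$ forces $a^2+b^2<1$; but since $q>2$ and $a^q+b^q=1$, whenever both $a,b\in(0,1)$ the strict superadditivity of $t\mapsto t^2$ with respect to $t\mapsto t^q$ gives $a^2+b^2>a^q+b^q=1$, a contradiction. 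Hence $b=0$, $\|u\|_{L^q(\Omega)}=1$, and the lower semicontinuity of the Dirichlet integral closes the case. The main conceptual hurdle I am side-stepping by restricting to $p=2$ is the absence, for a generic minimizing sequence, of almost everywhere convergence of the gradients; this is what prevents a clean Brezis-Lieb splitting of the Dirichlet energy when $p\ne 2$, and would require either a well-prepared sequence (as in Section \ref{sec:5}) or a concentration-compactness argument.
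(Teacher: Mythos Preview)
Your argument is correct, and it shares the same skeleton as the paper's proof (minimizing sequence, weak limit, Hilbert-space splitting of the gradients, Brezis--Lieb in $L^q$), but the endgame is genuinely different. The paper, after invoking Lemma~\ref{lm:avanzo} to get $u\not\equiv 0$, uses only the crude bound $\|\nabla(u_n-u)\|_{L^2}^2\ge \lambda_{2,q}(\Omega)\,\|u_n-u\|_{L^q}^2$ together with the subadditivity of $t\mapsto t^{2/q}$ to conclude directly that $\|\nabla u\|_{L^2}^2=\lambda_{2,q}(\Omega)\,\|u\|_{L^q}^2$; the strict gap \eqref{minoratoapp} is thus used \emph{only once}, through Lemma~\ref{lm:avanzo}. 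You instead prove the sharper tail estimate $\liminf_n\|\nabla(u_n-u)\|_{L^2}^2\ge \mathcal{E}_{2,q}(\Omega)\,b^2$ via a cut-off, and then exploit the gap a second time, combined with the elementary fact that $a^2+b^2>a^q+b^q$ for $a,b\in(0,1)$ and $q>2$, to force $b=0$. Your route is a bit more hands-on but yields a bonus: strong $L^q$ convergence of the (sub)sequence, not just extremality of the limit. Note also that your tail estimate makes the appeal to Lemma~\ref{lm:avanzo} redundant: if $a=0$ then $b=1$ and your inequality $\lambda_{2,q}(\Omega)\ge \mathcal{E}_{2,q}(\Omega)\,b^2$ already contradicts \eqref{minoratoapp}.
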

\begin{proof}
Let $\{u_n\}_{n\in\mathbb{N}}\subseteq W^{1,2}_0(\Omega)$ be a minimizing sequence for $\lambda_{2,q}(\Omega)$, such that
\[
u_n\in \mathcal{S}_{2,q}(\Omega)\quad \text{and}\quad  \int_\Omega |\nabla u_n|^2\,dx\le \lambda_{2,q}(\Omega)+\frac{1}{n+1},\qquad \text{for every}\ n\in\mathbb{N}.
\]
Up to pass to a subsequence, we can suppose that there exists $u\in W^{1,2}_0(\Omega)$ such that $\{u_n\}_{n\in\mathbb{N}}$ converges weakly in $W^{1,2}(\Omega)$ to $u$. We can also suppose that the sequence converges almost everywhere in $\Omega$ to $u$ (see, for example, \cite[Theorem 8.6]{LiLo}). Thanks to the assumption \eqref{minoratoapp}, we have that $\{u_n\}_{n\in\mathbb{N}}$ satisfies the hypotheses of Lemma \ref{lm:avanzo}. This in particular gives that $u\not\equiv 0$. We are going to show that $u$ is the desired extremal. Indeed, observe that we have
\[
\int_\Omega |\nabla u_n|^2\,dx=\int_\Omega |\nabla u|^2\,dx+\int_\Omega |\nabla u_n-\nabla u|^2\,dx-2\,\int_\Omega \langle \nabla u_n-\nabla u,\nabla u\rangle\,dx,
\]
and the last term converges to $0$, as $n$ goes to $\infty$, thanks to the weak convergence of the gradients. Moreover, again by the Brezis-Lieb Lemma we have 
\[
\int_\Omega |u_n|^q\,dx=\int_\Omega |u|^q\,dx+\int_\Omega |u_n-u|^q\,dx+o(1),\qquad \text{as $n$ goes to}\ \infty.
\]
By using these informations, for $n$ going to $\infty$ we get
\[
\begin{split}
o(1)&=\int_\Omega |\nabla u_n|^2\,dx-\lambda_{2,q}(\Omega)\,\left(\int_\Omega |u_n|^q\,dx\right)^\frac{2}{q}\\
&=\int_\Omega |\nabla u|^2\,dx+\int_\Omega |\nabla u_n-\nabla u|^2\,dx-2\,\int_\Omega \langle \nabla u_n-\nabla u,\nabla u\rangle\,dx\\
&-\lambda_{2,q}(\Omega)\,\left(\int_\Omega |u|^q\,dx+\int_\Omega |u_n-u|^q\,dx+o(1)\right)^\frac{2}{q}\\
&\ge \int_\Omega |\nabla u|^2\,dx+\int_\Omega |\nabla u_n-\nabla u|^2\,dx+o(1)\\
&-\lambda_{2,q}(\Omega)\,\left(\int_\Omega |u|^q\,dx+o(1)\right)^\frac{2}{q}-\lambda_{2,q}(\Omega)\,\left(\int_\Omega |u_n-u|^q\,dx\right)^\frac{2}{q},
\end{split}
\]
thanks to the subadditivity of the power $t\mapsto t^{2/q}$. By observing that
\[
\int_\Omega |\nabla u_n-\nabla u|^2\,dx\ge \lambda_{2,q}(\Omega)\,\left(\int_\Omega |u_n-u|^q\,dx\right)^\frac{2}{q},
\]
we thus obtain
\[
o(1)\ge \int_\Omega |\nabla u|^2\,dx-\lambda_{2,q}(\Omega)\,\left(\int_\Omega |u|^q\,dx+o(1)\right)^\frac{2}{q}.
\]
By taking the limit as $n$ goes to $\infty$, this in turn implies
\[
0\ge \int_\Omega |\nabla u|^2\,dx-\lambda_{2,q}(\Omega)\,\left(\int_\Omega |u|^q\,dx\right)^\frac{2}{q}.
\]
Since the converse inequality holds true by the very definition of $\lambda_{2,q}(\Omega)$, we get that
\[
\int_\Omega |\nabla u|^2\,dx=\lambda_{2,q}(\Omega)\,\left(\int_\Omega |u|^q\,dx\right)^\frac{2}{q}.
\]
By recalling that $u\not\equiv 0$, we finally end up with the existence of an extremal.
\end{proof}
\begin{rem}
\label{rem:noperiodico}
It is useful to remark that Theorem \ref{teo:main} can not be obtained as a consequence of Proposition \ref{prop:esistenzadelcazzo}, in general. More precisely, we can not hope to prove Theorem \ref{teo:main} in full generality, by exploiting the previous argument. As a simple example, we can take 
\[
\Omega=\mathbb{R}^N\setminus \left(\bigcup_{\mathbf{i}\in\mathbb{Z}^N}\overline{B_r(\mathbf{i})}\right),\qquad 0<r<\frac{1}{2}.
\]
It is not difficult to see that for this set the condition \eqref{minoratoapp} crucially fails, since we have seen in Lemma \ref{lm:skorpion} that
\[
\lambda_{2,q}(\Omega)=\mathcal{E}_{2,q}(\Omega).
\]
Nevertheless, we have existence of extremals by Theorem \ref{teo:main}.
\end{rem}

\medskip


\begin{thebibliography}{100}



\bibitem{AmTo} C. J. Amick, J. F. Toland, Nonlinear elliptic eigenvalue problems on an infinite strip -- global theory of bifurcation and asymptotic bifurcation, Math. Ann., {\bf 262} (1983), 313--342.

\bibitem{BBT} J. L. Bona, D. K. Bose, R. E. L. Turner, Finite-amplitude steady waves in stratified fluids,
J. Math. Pures Appl. (9), {\bf 62} (1983), 389--439.

\bibitem{BozBra2} F. Bozzola, L. Brasco, Variations on the capacitary inradius, Discrete Contin. Dyn. Syst. Ser. S, (2025)

\bibitem{BozBra} F. Bozzola, L. Brasco, Capacitary inradius and Poincar\'e-Sobolev inequalities, ESAIM Control Optim. Calc. Var., {\bf 31} (2025), Paper No. 26, 38 pp.

\bibitem{BozBra0} F. Bozzola, L. Brasco, The role of topology and capacity in some bounds for principal frequencies, J. Geom. Anal., {\bf 34} (2024), Paper No. 299, 46 pp.

\bibitem{BraBook} L. Brasco,  {\it Handbook of Calculus of Variations for absolute beginners.} Unitext, {\bf 163}, La Mat. per il 3+2, Springer, Cham, 2025.

%\bibitem{BraLin} L. Brasco, E. Lindgren, Uniqueness of extremals for some sharp Poincar\'e-Sobolev constants,

\bibitem{BraBriPri} L. Brasco, L. Briani, F. Prinari, Extremals for Poincar\'e-Sobolev sharp constants in Steiner symmetric sets, preprint (2025), available at {\tt https://arxiv.org/abs/2505.11084}

\bibitem{BraFra} L. Brasco, G. Franzina, An overview on constrained critical points of Dirichlet integrals, Rend. Semin. Mat. Univ. Politec. Torino, {\bf 78} (2020), 7--50.

\bibitem{BraPriZag} L. Brasco, F. Prinari, A.C. Zagati, Sobolev embeddings and distance functions,  Adv. Calc. Var., {\bf 17} (2024), 1365--1398.
 
\bibitem{BPZ} L. Brasco, F. Prinari, A.C. Zagati, A comparison principle for the Lane-Emden equation and applications to geometric estimates, Nonlinear Anal. 220 (2022), Paper No. 112847, 41 pp.

%\bibitem{BR} L. Brasco, B. Ruffini, Compact Sobolev embeddings and torsion functions,  Ann. Inst. H. Poincar\'e Anal. Non Lin\'eaire, {\bf 34} (2017), 817--843.

\bibitem{Brezis} H. Brezis, {\it Functional Analysis, Sobolev Spaces and Partial Differential Equations.}  Universitext, Springer, New York, 2011.

\bibitem{BreLie} H. Br\'ezis, E. Lieb, A relation between pointwise convergence of functions and convergence of functionals,
	Proc. Amer. Math. Soc., {\bf 88} (1983), 486--490.

%\bibitem{BGRS} W. Bulla ,F. Gesztesy, , W. Renger, B. Simon. Weakly coupled bound states in quantum waveguides. Proc. Amer. Math. Soc. 125 (1997), no. 5, 1487–1495.

%\bibitem{CFL} E. A. Carlen, R. L. Frank, E. H. Lieb, Stability estimates for the lowest eigenvalue of a Schr\"odinger operator, Geom. Funct. Anal., {\bf 24} (2014), 63--84.

\bibitem{Cha} J. Chabrowski, Concentration-compactness principle at infinity and semilinear elliptic equations involving critical and subcritical Sobolev exponents, Calc. Var. Partial Differential Equations, {\bf 3} (1995), 493--512.
%
%\bibitem{DelF}  M. A. del Pino, P. L. Felmer, Least energy solutions for elliptic equations in unbounded domains, Proc. Roy. Soc. Edinburgh Sect. A, {\bf 126} (1996), 195--208.

\bibitem{EP} G. Ercole, G. Pereira, The Asymptotics for the best Sobolev constants and their extremal functions, Math. Nachr., {\bf 289} (2016), 1433--1449.

\bibitem{Es} M. J. Esteban, Nonlinear elliptic problems in strip-like domains: symmetry of positive vortex rings, Nonlinear Anal., {\bf 7} (1983), 365--379.

%\bibitem{FrLi} R. L. Frank, E. H. Lieb, A compactness lemma and its application to the existence of minimizers for the liquid drop model, SIAM J. Math. Anal., {\bf 47} (2015), 4436--4450. 

\bibitem{FLL} J. Fr\"olich, E. H. Lieb, M. Loss, Stability of Coulomb systems with magnetic fields. I. The one-electron atom, Comm. Math. Phys., {\bf 104} (1986), 251--270. 

%\bibitem{HeKiMa} J. Heinonen, T. Kilpeläinen, O. Martio, Nonlinear potential theory of degenerate elliptic equations, Dover Publications, Inc., Mineola, NY, 2006, xii+404 pp.

%\bibitem{He} A. Henrot, Extremum Problems for Eigenvalues of Elliptic Operators. Birkhäuser, Basel (2006).

\bibitem{HL} R. Hynd, E. Lindgren, Extremal functions for Morrey's inequality in convex domains, Math. Ann., {\bf 375} (2019), 1721--1743.

%\bibitem{Kw} B. Kawohl, Rearrangements and convexity of level sets in PDE. Lecture Notes in Math., 1150 Springer-Verlag, Berlin, 1985. iv+136 pp.

\bibitem{KLP} B. Kawohl, M. Lucia, S. Prashanth, Simplicity of the principal eigenvalue for indefinite quasilinear problems, 
Adv. Differential Equations, {\bf 12} (2007), 407--434.

\bibitem{HuLi} D. Huang, Y. Li, A concentration-compactness principle at infinity and positive solutions of some quasilinear elliptic equations in unbounded domains, J. Math. Anal. Appl., {\bf 304} (2005), 58--73.

%\bibitem{KST} K. Kurata, M. Shibata, K. Tada, Existence of positive solutions for some nonlinear elliptic equations on unbounded domains with cylindrical ends, Nonlinear Anal., {\bf 55} (2003), 83--101.

%\bibitem{Le} G. Leoni, A first course in Sobolev spaces. Second edition. Graduate Studies in Mathematics, {\bf 181}. American Mathematical Society, Providence, RI, 2017

\bibitem{Li} E. H. Lieb, On the lowest eigenvalue of the Laplacian for the intersection of two domains,
Invent. Math., {\bf 74} (1983), 441--448.

\bibitem{LiHLS} E. H. Lieb, Sharp constants in the Hardy-Littlewood-Sobolev and related inequalities, Ann. of Math. (2), {\bf 118} (1983), 349--374.

\bibitem{LiLo} E. H. Lieb, M. Loss,  {\it Analysis}, Second Edition. American Mathematical Society, Providence, 2001.

\bibitem{LTW} W. C. Lien, S. Y. Tzeng, H. C. Wang, Existence of solutions of semilinear elliptic problems on unbounded domains, Differential Integral Equations, {\bf 6} (1993), 1281--1298.

%\bibitem{Lind} P. Lindqvist, Notes on the $p$-Laplace equation. Report. University of Jyv\"asyl\"a, Department of Mathematics and Statistics, 102. University of Jyv\"asyl\"a, Jyv\"asyl\"a, 2006. 22, 39

\bibitem{Maz} V. Maz'ya, {\it Sobolev spaces with applications to elliptic partial differential equations}. Second, revised and augmented edition. Grundlehren der Mathematischen Wissenschaften [Fundamental Principles of Mathematical Sciences], {\bf 342}. Springer, Heidelberg, 2011. 

\bibitem{MaSh} V. Maz'ya,  M. Shubin, Can one see the fundamental frequency of a drum?, Lett. Math. Phys., {\bf 74},  (2005), 135--151.

%\bibitem{ErPer} G. Ercole, G. Pereira The Asymptotics for the best Sobolev constants and their extremal functions. Math. Nachr. 289(11–12), 1433–1449 (2016)

\bibitem{Po1} G. Poliquin, Principal frequency of the $p-$Laplacian and the inradius of Euclidean domains, J. Topol. Anal., {\bf7} (2015), 505--511.

\bibitem{Sm} D. Smets, A concentration-compactness lemma with applications to singular eigenvalue problems, J. Funct. Anal., {\bf 167} (1999), 463--480.

%\bibitem{T} N.S.  Trudinger,  On Harnack type inequalities and their application to quasilinear elliptic equations. Comm. Pure Appl. Math. 20 (1967), 721--747. 

\bibitem{St} M. Struwe, A global compactness result for elliptic boundary value problems involving limiting nonlinearities, Math. Z., {\bf 187} (1984), 511--517.

\bibitem{Vit} A. Vitolo, $H^{1,p}-$eigenvalues and $L^\infty-$estimates in quasicylindrical domains, Commun. Pure Appl. Anal., {\bf 10} (2011), 1315--1329.

\end{thebibliography}
\end{document}